\newcommand{\divergence}{\operatorname{div}}
\newcommand{\curl}{\operatorname{curl}}
\newcommand{\rr}{\mathbb{R}}
\newcommand{\mm}{{\mathbb{M}}}
\newcommand{\dd}{{\mathbb{D}}}
\newcommand{\kk}{{\mathbb{K}}}
\newcommand{\vecb}[1]{{#1}}
\newcommand{\matb}[1]{{#1}}
\newcommand{\eps}{{\varepsilon}}
\newcommand{\HT}{\widetilde{H}}
\newcommand{\AT}{\widetilde{A}}
\newcommand{\BDM}{{\textrm{BDM}}}
\newcommand{\trans}{{{T}}}
\newcommand{\normal}{{\vecb{n}}}
\newcommand{\tangential}{{\vecb{t}}}
\newcommand{\ds}{}
\newcommand{\dx}{}
\DeclareMathOperator*{\argmin}{arg\,min}
\newcommand{\mesh}{\mathcal{T}_h}
\newcommand{\VUUH}{\mathcal{V}_h}
\newcommand{\LOVUUH}{\mathcal{V}_h^{\text{lo}}}
\newcommand{\VUUHC}{\mathcal{V}_h^c}
\newcommand{\VUUHI}{\mathcal{V}_h^{\circ}}
\newcommand{\VUUHH}{\mathcal{V}_h^{\text{harm}}}
\newcommand{\trace}[1]{\textrm{tr}({#1})}
\newcommand{\jump}[1]{ {[\![ #1 ] \!]}}
\newcommand{\mean}[1]{{ \{\!\{#1\}\!\} }}
\newcommand{\Dev}[1]{\operatorname{dev}{(#1)}}
\newcommand{\dev}[1]{\Dev{#1}}
\newcommand{\Velvar}{{\vecb{u}}}
\newcommand{\Stressvarhtest}{\matb{\tau}_h}
\newcommand{\Poly}{{\mathbb{P}}}
\newcommand{\projlo}{\Pi^{\operatorname{lo}}}
\newcommand{\spacedim}{{d}}
\renewcommand{\div}{\operatorname{div}}
\def\addlegendimage{\csname pgfplots@addlegendimage\endcsname}
\newcommand{\Vhat}{\hat{V}}
\newcommand{\Vbar}{\bar{V}}
\newcommand{\ub}{\bar{u}}
\newcommand{\ubh}{\ub_h}
\newcommand{\vb}{\bar{v}}
\newcommand{\vbh}{\vb_h}
\newcommand{\vof}[1]{\bm{#1}}
\newcommand{\mat}[1]{\bm{#1}}
\newcommand{\Ss}{A}                            % SC operator
\newcommand{\SsD}{A^{\partial}}                % double SC operator
\newcommand{\SsM}{\mat{A}}                     % matrix
\newcommand{\SsDM}{\mat{A}^{\partial}}         % matrix
\newcommand{\SsMH}{\hat{\mat{A}}}                   % generic PC mat
\newcommand{\SsDMH}{\hat{\mat{A}}^{\partial}}       % generic PC mat DS
\newcommand{\SsMHA}{\hat{\mat{A}}_{a}}              % add PC mat
\newcommand{\SsDMHA}{\hat{\mat{A}}_{a}^{\partial}}  % add PC mat DS
\newcommand{\SsMHM}{\hat{\mat{A}}_{m}}               % mult PC mat
\newcommand{\SsDMHM}{\hat{\mat{A}}_{m}^{\partial}}   % mult PC mat DS
\newcommand{\Abm}{\bar{\mat{A}}}
\newcommand{\GalEq}{\sim_{\scriptscriptstyle \text{G}}} % galerkin-equals
\newcommand{\GalEqD}{\sim_{\scriptscriptstyle \text{G}}^{\partial}} % galerkin-equals only SC dofs
\newcommand{\mM}{\mat{M}}
\newcommand{\mDM}{\mat{M}^{\partial}}
\newcommand{\mA}{\mat{A}}
\newcommand{\mE}{\mat{E}}
\newcommand{\mDE}{\mat{E}^{\partial}}
\newcommand{\mDET}{\mat{E}^{\partial,T}}
\newcommand{\mC}{\mat{C}}
\newcommand{\mB}{\mat{B}}
\newcommand{\mK}{\mat{K}}
\newcommand{\mKH}{\hat{\mat{K}}}
\newcommand{\mZ}{\mat{0}}
\newcommand{\mI}{\mat{I}}
\newcommand{\mSp}{\mat{S}_{p}}
\newcommand{\mSpH}{\hat{\mat{S}}_{p}}
\newcommand{\vsig}{\vof{\sigma}}
\newcommand{\vu}{\vof{u}}
\newcommand{\vuh}{\vof{\hat{u}}}
\newcommand{\vw}{\vof{w}}
\newcommand{\vomega}{\vof{\omega}}
\newcommand{\vx}{\vof{x}}
\newcommand{\vp}{\vof{p}}
\newcommand{\vq}{\vof{q}}
\newcommand{\vf}{\vof{F}}
\newcommand{\pdg}{{k}}   %polynomial degree
\newcommand{\IU}{\mathcal{I}_{\Vbar_h}}
\newcommand{\IV}{\mathcal{I}}
\newcommand{\IVf}{\mathcal{I}_{\scriptscriptstyle f}}
\newcommand{\GammaD}{\Gamma_D}
\newcommand{\GammaN}{\Gamma_N}
\newcommand{\GammaNT}{\Gamma_{\tilde{N}}}
\newcommand*{\verts}{\mathcal{V}}
\newcommand*{\meshT}{\mathcal{T}_{h, T}}
\newcommand{\RM}{{\textbf{RM}}}
\newcommand{\facetsT}{\mathcal{F}_T}
\newcommand{\facets}{\mathcal{F}_h}
\newcommand{\facetsD}{\facets^{\scriptscriptstyle D}}
\newcommand{\facetsN}{\facets^{\scriptscriptstyle N}}
\newcommand{\facetsNT}{\facets^{\scriptscriptstyle \tilde{N}}}
\newcommand{\facetsI}{\facets^{0}}
\newtheorem{theorem}{Theorem}
\newtheorem{lemma}{Lemma}
\newtheorem{corollary}{Corollary}
\newtheorem{remark}{Remark}
\author[L. Kogler]{Lukas Kogler}
\address{Institute for Analysis and ScientificComputing, TU Wien,
Wiedner Hauptstrasse 8-10, 1040 Vienna, Austria}
\email{lukas.kogler@tuwien.ac.at}
\author[P.~L.~Lederer]{Philip L. Lederer}
\address{Institute for Analysis and ScientificComputing, TU Wien,
Wiedner Hauptstrasse 8-10, 1040 Vienna, Austria}
\email{philip.lederer@tuwien.ac.at}
\author[J.Sch\"oberl]{joachim Sch\"oberl}
\address{Institute for Analysis and ScientificComputing, TU Wien,
Wiedner Hauptstrasse 8-10, 1040 Vienna, Austria}
\email{joachim.schoeberl@tuwien.ac.at}
\title[A conforming auxiliary space preconditioner for the MCS method]{A conforming auxiliary space preconditioner for the mass conserving mixed stress method}
\begin{document}
\maketitle
%------------------------------------------------------------------------------
\begin{abstract}
  We are studying the efficient solution of the system of linear
  equation stemming from the mass conserving mixed stress (MCS)
  method discretization of the Stokes equations. To that end we
  perform static condensation to arrive at a system for the pressure
  and velocity unknowns. An auxiliary space preconditioner for the
  positive definite velocity block makes use of efficient and scalable
  solvers for conforming Finite Element spaces of low order and is
  analyzed with emphasis placed on the polynomial degree of the
  discretization. Numerical experiments demonstrate the potential of
  this approach and the efficiency of the implementation.
\end{abstract}

\section{Introduction}
\label{sec::introduction}
Let $\Omega \subset \rr^d$ be a bounded domain with $d=2$ or $3$
with Lipschitz boundary $\Gamma := \partial \Omega$.
Let $u$ and $p$ be the velocity and the pressure,
respectively.
Given an external body force $f:\Omega \to \rr^d$ and
the double of kinematic viscosity denoted by ${\nu}$, the
velocity-pressure formulation of the Stokes system is given by
\begin{subequations}\label{eq::stokes}
\begin{align}
  -\divergence({\nu} \eps(u)) + \nabla p & = f \quad \textrm{in } \Omega,  \label{eq::stokes_A}\\
  \divergence (u) &=0 \quad  \textrm{in } \Omega, \label{eq::stokes_div}
\end{align}
\end{subequations}
where $\eps(\Velvar) = \frac{1}{2}(\nabla u + (\nabla u)^ \trans)$. By
introducing additional matrix valued variables $\sigma := -\nu\eps(u)$
for the stress and $\omega := \frac{1}{2}(\nabla u - (\nabla u)^T)$,
these equations can be restated as
\begin{subequations}
  \label{eq::mixedstressstokes}
  \begin{alignat}{2}
    -\nu^{-1}\dev{\sigma} - \nabla u + \omega & = 0 \quad \textrm{in } \Omega, \label{eq::mixedstressstokes-a} \\
    \divergence(  \sigma ) + \nabla p & = f \quad \textrm{in } \Omega, \label{eq::mixedstressstokes-b} \\
    \sigma - \sigma^\trans & = 0 \quad \textrm{in } \Omega, \label{eq::mixedstressstokes-c} \\
  \divergence (u) &=0 \quad \textrm{in } \Omega, \label{eq::mixedstressstokes-d} 
\intertext{
where \eqref{eq::mixedstressstokes-a} is motivated by the fact that
for the solution of \eqref{eq::stokes} we have $ \sigma = -\nu \eps(u)
= -\nu \dev{\eps(u)} = \dev{\sigma}$.
The introduction of $\omega$ as a Lagrange multiplier enables %nja, enable mag ich da ned so
the derivation of discrete methods that enforce the symmetry constraint
\eqref{eq::mixedstressstokes-c} weakly, see also \cite{Stenberg1988, MR2629995, MR2336264}.
As boundary conditions, we consider Dirichlet ones for the velocity $u$,
homogenous purely for clarity of the presentation,
and two kinds of outlet conditions,
}
    u &= 0 \quad \textrm{on } \GammaD, \label{eq::stokes_D}\\
    % -({\nu} \eps(u) + p I) n &= 0 &&\textrm{on } \GammaN, \label{eq::stokes_N}\\
    (\sigma + p I) n &= 0 \quad \textrm{on } \GammaN, \label{eq::stokes_N}\\
    ((\sigma + p I) n) \cdot n = u_t &= 0 \quad \textrm{on }\GammaNT, \label{eq::stokes_NT}
  \end{alignat}
\end{subequations}
where $I$ is the $d\times d$ identity matrix and $u_t$ is the tangential part of $u$.
We assume that both $\GammaD$ and at least one of $\GammaN$ or $\GammaNT$
have positive measure.
As usual, when $\GammaN=\GammaNT=\emptyset$, an additional condition must be imposed
on the pressure to make it unique.

In recent years, divergence-free and pressure-robust Finite Element discretizations,
that is those whose solutions
fulfill \eqref{eq::mixedstressstokes-d} strongly, and
allow for pressure-independent a-priori error estimates respectively,
have been of great interest \cite{JLMNR:sirev}.

For the velocity-pressure formulation \eqref{eq::stokes}, one class of such
methods are certain Hybrid Discontinuous Galerkin (HDG) methods that
take the velocity in $H(\div,\Omega)$ and the pressure in $L^2(\Omega)$,
i.e. they only build normal continuity into the Finite
Element space while the tangential continuity of the solution is enforced
via Lagrange parameters.
To make the resulting system for the velocity positive definite,
a consistent stabilization term has to be added, often involving
either a parameter that has to be sufficiently large or a lifting
of the jump, see \cite{MR2684358,arnold2002unified}.

In \cite{lederer2019mass, Lederer:2019c}, the authors presented a
novel variational formulation for the Stokes equations that still
takes the velocity in $H(\div, \Omega)$ and pressure in $L^2(\Omega)$,
remaining the property of exactly divergence-free and pressure-robust
solutions, but is based on \eqref{eq::mixedstressstokes} instead of
\eqref{eq::stokes}. This mass conserving mixed stress (MCS) method
features a normal-tangential continuous stress space and requires no
stabilizing term. It was already remarked in the original work
\cite{lederer2019mass} that static condensation can be performed to
eliminate certain $\sigma$ degrees of freedom (dofs) and later in
\cite{gopalakrishnan2021minimal} this approach was taken to it's
logical conclusion of breaking the normal-tangential continuity of
$\sigma$ with a Lagrange parameter and eliminating $\sigma$ entirely.
The resulting, condense, system is one for the velocity in $H(\div,
\Omega)$, the pressure in $L^2(\Omega)$, and the newly introduced
Lagrange parameter $\hat{u}$; It turns out to be an approximation to
the tangential velocity trace on the mesh facets. The velocity
unknowns $u,\hat{u}$ take the place of $\sigma$ as primal variables in
the condense saddle point system, with the pressure remaining the
Lagrange parameter enforcing \eqref{eq::mixedstressstokes-d}. That is,
the condense system involves the same variables, and has the same
structure as the HDG methods mentioned above, \emph{but without the
need for a stabilization term}. As the first contribution of this work
we take a closer look at the condense system and in particular proof
that the velocity block is in fact positive definite, as was claimed
in \cite{gopalakrishnan2021minimal} for a low order MCS method, and is
related to the velocity block stemming from an HDG method with optimal
stabilization.

We then move on to the question of how to efficiently solve the condense system
and consider preconditioned Krylov space methods.
Preconditioning techniques for saddle point systems
based on separate preconditioners for the primal (velocity) and
Lagrange (pressure) unknowns are a well studied subject, see
\cite{benzi_golub_liesen_2005}, and the pressure Schur complement
is easily preconditioned, see \cite{Wathen1993FastIS}.
Therefore, our focus is on identifying and analyzing suitable
preconditioners for the condense velocity block.

The literature on preconditioners for conforming methods is vast
and includes, among others, domain decomposition, see \cite{Toselli2005}, as well
as Geometric, see \cite{braess}, and Algebraic, see \cite{xu_zikatanov_2017}, Multigrid methods
and an even somewhat comprehensive review
would be beyond the scope of this work.
We will take as given that efficient and scalable solvers for conforming methods
exist and are available.

Preconditioners for HDG methods are not quite as well studied in literature,
one recurring theme is the attempt to reuse conforming preconditioners for these
non-conforming spaces.
For example, a non-nested Multigrid method with conforming coarse grid spaces
was studied in \cite{mg_hdg},
and auxiliary space preconditioners
(ASP, see \cite{XuAux_1996}) that also feature a conforming sub-space
were considered in \cite{Fu2021}.

The idea at the heart of both approaches
is to decompose functions in the non-conforming space into
a conforming component plus a (small) remainder and to address them
separately with some pre-existing conforming preconditioner
and a simple, computationally inexpensive method such as
(Block-)Jacobi, respectively.

The principal focus in this work is on the introduction and analysis of ASPs for the MCS method.
The main improvement over the theory in \cite{Fu2021} is that the analysis of the
velocity preconditioners extends techniques from \cite{Schoeberl2013} and is
explicit in the polynomial degree of the discretization.
In particular, the main result, Theorem \ref{theorem::aux_pc_schur},
states that the condition number of a particular ASP is bounded
by $\gamma\cdot(\log(k))^3$, where $\pdg$ is the polynomial degree
of the discretization and $\gamma$ is a constant stemming from
the relation between condense MCS and HDG norms.

We close out the discussion with numerical experiments that
demonstrate the robustness and scalability of the proposed preconditioners.
It is a testament to the elegance and simplicity of the ASP method
that we were able to scale the computations to a relatively large scale
by leveraging existing, scalable and highly performant software.

\paragraph{Outline}

We gather notation used throughout this work in Section \ref{sec::notation}
and introduce various Finite Element spaces and norms in Section \ref{sec::fem_nreq}
which also contains some useful technical results.
Section \ref{sec::mcsstokes} reviews the MCS method itself and contains a
thorough discussion of static condensation as well as results on the
obtained condense systems.
Approaches for preconditioning saddle point matrices with separate
preconditioners for the primal unknowns and Lagrange multipliers
as well as the method of auxiliary space preconditioning
are recalled in Section \ref{sec::pc_frame}.
The main results can be found in Section \ref{sec::aux},
where different variations of ASPs for the velocity block of the
Stokes system are discussed.
In Section \ref{sec::lorder}, we sketch the treatment of the lowest order case
which is not covered by the theory developed in previous sections.
Finally, numerical experiments are performed in Section \ref{sec::numerical}.

\section{Notation} \label{sec::notation}
% We introduce the notation and symbols that we use within this work.
With $\mm$ denoting the vector space of real $d \times d$ matrices, we define
the subsets of skew-symmetric and skew-symmetric trace-free matrices by
\begin{align*}
  \kk =\{\tau \in \mm: \tau + \tau^\trans = 0 \} \quad \textrm{and} \quad   \dd =\{\tau \in \mm: \tau:I = 0 \},
\end{align*}
where $(\cdot)^\trans$ denotes the transpose and $I \in \mm$ the
identity matrix. To differentiate between scalar-, vector- and
matrix-valued functions on some subset $D\subseteq\Omega$ we include
the range in the notation for the latter two while we omit it for the
former one, i.e. where $L^2(D, \rr) = L^2(D)$ denotes the space of
square integrable $\rr$-valued scalar functions, the spaces $L^2(D,
\rr^d)$ and $L^2(D, \mm)$ denote the analogous vector- and
matrix-valued spaces. Similarly, $\Poly^{\pdg}(D,\rr)=\Poly^k(D)$,
etc., denote the set of scalar-, vector- or matrix- valued polynomials
up to degree $\pdg$ on $D$. We use the notation $(\cdot,\cdot)_{D}$
for the $L^2$-inner product on $D$ and set $\| \cdot \|^2_D = (\cdot
,\cdot)_D$. The $L^2$-orthogonal projection onto
$\Poly^{\pdg}(D,\cdot)$ (the range should be clear from context) is
denoted by $\Pi^{\pdg}_{D}$ and we will occasionally omit the
subscript. Similarly, the $L^2$-orthogonal projector onto the
(restrictions to $D$ of) the rigid body modes $R_D := \{ u(x) = a +
b\times x : a,b\in\rr^d \}$ is written as $\Pi^R_{D}$.

In the following, let $\phi$, $\Phi$, and $\Psi$ be smooth scalar-,
vector-, and matrix-valued functions, respectively.
The operator $\nabla$ is to be understood from context as resulting in either
in a vector whose components are $\partial_i \phi := \partial\phi / \partial x_i$ or a matrix with
components $(\partial_i \Phi_j)$.
For vector-valued functions in three dimensions the operator $\curl$ is defined as
% $\curl \phi = (-\partial_2 \phi, \partial_1 \phi)^\trans$
$\curl \Phi := \nabla \times \Phi$
and in two we understand it to refer to the scalar-valued
$\curl \phi := -\partial_2 \phi_1 + \partial_1 \phi_2$.
The divergence operator $\div$ is understood as
$\div\Phi := \sum_{j=1}^d \partial_j \Phi_{j}$ for vectors
and is applied row-wise to matrices, i.e.  $(\div\Psi)_i := \sum_{j=1}^d \partial_j \Psi_{ij}$.
Besides the well known trace operator $\trace{\Psi} := \sum_{j=1}^d \Psi_{ii}$ and
the deviatoric part $\dev{\Psi} := \Psi - \frac{1}{d} \trace{\Psi}I$ we further
introduce the operator $\kappa: \rr^{d(d-1)/2} \to \kk$ by
\begin{align*}
  \kappa(\phi) :=
  \frac 1 2 
  \begin{pmatrix}
    0 & -\phi \\
    \phi & 0         
  \end{pmatrix}
  \; \text{ if } d=2,
  \qquad
  \kappa(\Phi) :=
  \frac 1 2 
  \begin{pmatrix}
    0 & -\Phi_3 & \Phi_2 \\
    \Phi_3 & 0 & -\Phi_1 \\
    -\Phi_2 & \Phi_1 & 0
  \end{pmatrix}
  \;\text{ if } d =3.  
\end{align*}

Based on these differential operators,
we use standard notation for the Sobolev spaces $H^m(\Omega,\rr)=H^m(\Omega), H(\div, \Omega)$
and $H(\curl, \Omega)$ with $m \ge 0$.
Further, for some $\Gamma_*\subseteq\partial\Omega$, a subscript "$0, \Gamma_*$"
indicates that the corresponding natural traces vanish on $\Gamma_*$, and
we use only the zero subscript if $\Gamma_* = \partial \Omega$.

We denote by $\mesh$ a quasi-uniform and shape regular
triangulation of the domain $\Omega$ into tetrahedra. % TODO: 2d/3d
Let $h$ denote the maximum of the diameters of all elements in $\mesh$.
The set of element interfaces and boundaries, or facets, is denoted by $\facets$
and the set of facets of a particular element $T\in\mesh$ is $\facetsT := \{F\in\facets : F\subseteq\partial T\}$.
By an abuse of notation, we shall also use $\facets$ to denote the domain formed by union of all $F \in \facets$.
We assume that the mesh resolves the domain boundary parts in the sense that
$\forall F\in\facets $ with $F\subseteq\partial\Omega~\exists!\Gamma_*\in\{\GammaD,\GammaN,\GammaNT\} $ such that $F\subseteq\Gamma_*$.
This splits $\facets$ into boundary facets
$\facetsD := \{F\in\facets : F\subseteq\GammaD\}$,
$\facetsN := \{F\in\facets : F\subseteq\GammaN\}$, and %TODO: welche davon brauchen wir?
$\facetsNT := \{F\in\facets : F\subseteq\GammaNT\}$,
and interior facets
$\facetsI := \facets\setminus (\facetsD\cup\facetsN\cup\facetsNT)$.
According to this mesh we also introduce the ``broken'' spaces
\begin{align*}
  H^m(\mesh,\cdot) := \prod_{T \in \mesh} H^m(T,\cdot),
  \quad
  \Poly^{\pdg}(\mesh,\cdot) :=  \prod_{T \in \mesh} \Poly^{\pdg}(T,\cdot),
  \quad
  \Poly^{\pdg}(\facets,\cdot) := \prod_{F\in\facets}\Poly^{\pdg}(F,\cdot),
\end{align*}
where, as before, we include the range explicitly
e.g. as in $\Poly^{\pdg}(\mesh, \rr^d)$.
On each $F \in \facets$ we denote by $\jump{\cdot}$ and $\mean{\cdot}$
the standard jump and mean value operators and take them to be the
identity on boundary facets.
On each element boundary and each facet $F\in\facets$
we denote by $n$ the outward unit normal vector.
The scalar normal and vector-valued tangential traces of a
sufficiently smooth function $v$ are given by 
$v_n := v\cdot n$ and $v_t := v - v_n n$.
% \begin{align*}
%  v_n := v\cdot n \quad \textrm{and} \quad v_t := v - v_n n,
% \end{align*}
Similarly, the normal-normal and normal-tangential traces of a
smooth matrix-valued function $\Psi$ are $\Psi_{nn}: = \Psi : (n \otimes n) = n^{\trans} \Psi n $
and $\Psi_{nt} = \Psi n - \Psi_{nn} n$.
% As before, for vector- or matrix-valued broken spaces the range is included explicitly in the notation,
% e.g. as in $\Poly^{\pdg}(\mesh, \rr^d)$.

We write functions in general Sobolev spaces as $u,\hat{u},\omega$,
etc., discrete functions with a subscript $h$ as $u_h, \hat{u}_h, \omega_h$, etc.,
and their via Galerkin isomorphism identified coefficient vectors w.r.t to some given Finite Element basis
as $\vof{u}, \hat{\vof{u}}, \vof{\omega}$, etc.
For readability of the presentation we make no difference between row and column vectors
and, for example, write $(\vu,\vuh)$ for the coefficient vector of $(u_h,\hat{u}_h)$
which should strictly speaking be the column vector $(\vu^T, \vuh^T)^T$.
Similarly, operators are capital letters $A,B$, etc.,
their discrete counterparts $A_h, B_h$, etc., and the
corresponding Finite Element matrices $\mat{A}, \mat{B}$, etc.
Occasionally, when it is useful to emphasize the Galerkin isomorphism
we use $\GalEq$, e.g. $u_h\GalEq\vof{u}$ or $A_h\GalEq\mat{A}$.

Finally, throughout this work we write $A \lesssim B$ when there
exists a constant $c>0$ {\em independent of the mesh size $h$ and the viscosity $\nu$}
such that $c A \le B$ and $A\sim B \Leftrightarrow A\lesssim B \wedge B \lesssim A$.
For example, due to quasi-uniformity we have $h \sim \textrm{diam}(T)~\forall T\in \mesh$.
For two elliptic operators $A,B$ (or symmetric and positive definite matrices $\mat{A},\mat{B}$)
we take $A\lesssim B$ to mean that the maximum eigenvalue of the generalized eigenvalue problem
$A x = \lambda B x$ is bounded by a constant $C$ similarly independent of $h$ and $\nu$.
Note that in inequalities related to discrete functions or operators,
unless explicitly stated otherwise, these constants can depend on
the polynomial degree.
Henceforth we assume that $\nu$ is a constant.

\section{Finite Elements and norm equivalences} \label{sec::fem_nreq}
Reminding ourselfs that the lowest order case is addressed separately in Section \ref{sec::lorder},
we define the following approximation spaces for $\pdg\ge 2$:
\begin{align}
  V_h &:= \{u_h \in \BDM^{\pdg}(\mesh): (u_h)_n = 0 \textrm{ on } \GammaD \}, \label{eq::fes_V} \\
  \hat{V}_h &:= \{ \hat u_h \in \Poly^{\pdg-1}(\facets, \rr^d): (\hat{u}_h)_n = 0 ~ \forall F \in \facets \textrm{ and } \hat u_h = 0 ~ \forall F \subset \GammaD\cup\GammaNT \}, \label{eq::fes_Vh} \\
  W_h & := \Poly^{\pdg-1}(\mesh, \kk), \label{eq::fes_W} \\
  \Sigma_h &:= \{ \tau_h \in \Poly^{\pdg}(\mesh, {\dd}):(\tau_h)_{\normal\tangential} \in \Poly^{\pdg-1}(F, \rr^d) ~\forall F \in \facets \}, \label{eq::fes_Sigma} \\
  Q_h & := \Poly^{\pdg-1}(\mesh, \rr), \label{eq::fes_Q} \\
  \Vbar_h & := \{ u_h\in \Poly^1(\mesh, \rr^d)\cap H^1(\Omega, \rr^d) : u_h = 0 \textrm{ on } \GammaD \} \label{eq::fes_Vbar}.
\end{align}
See \cite{brezzi2012mixed} for a detailed discussion of the $H(\div)$-conforming Brezzi-Douglas-Marini ($\BDM$)
space appearing in the definition of $V_h$.
Note that, restricted to a single element $T$,
in addition to $\Poly^{\pdg-1}(T, \dd)$, the
stress space $\Sigma_h$ also includes functions in $\mathbb{P}^{\pdg}(T, \dd)$
with vanishing normal tangential trace (``$nt$-bubbles'').
% While the spaces given in \eqref{eq::fes_V}-\eqref{eq::fes_Q} play a role in the
% discretization itself, the lowest order conforming space
% \eqref{eq::fes_Vbar} only takes part in preconditioning.
We further define the space of divergence
free velocities $ V_h^0 := \{v_h \in V_h: \div(v_h) = 0 \}$ and the
product spaces $ \VUUH := V_h \times \hat V_h$,
$U_h := V_h \times \hat V_h \times W_h$ and
$U^0_h := V^0_h \times \hat V_h \times W_h$.
Following \cite{Schb_DDHDG}, for $T\in\mesh$, $F\in\facetsT$ and $u\in\Poly^{\pdg}(F,\rr^d)$ we write
\begin{align}\label{eq::jFjumpN}
  \|u\|_{j,F,T}^2 := \sup_{\sigma\in\Poly^{\pdg}(T,\rr^d)}\frac{(u, \sigma)_F^2}{\|\sigma\|_T^2} \sim h^{-1}\sum_{j=0}^{\pdg}\pdg(\pdg-j+1)\|(\Pi^j_F - \Pi_F^{j-1})u\|_F^2,
\end{align}
where $\Pi^{-1}_F:=0$ and the equivalence was shown in \cite[Theorem 2]{Schb_DDHDG}.
Note that where it is clear from context which volume element $T$ is meant,
we omit it from the subscript and simply write $\|\cdot\|_{j,F}$.
% The Discontinuous Galerkin (DG) norms
% \begin{align}
%   \|u_h\|_{1,h}^2 &:= \sum_{T\in\mesh}\Big(\|\nabla u_h\|_T^2 + \sum_{F\in\facetsI\cup\facetsD}\|\Pi^{\pdg-1}\jump{u_h}\|_{j,F,T}^2\Big), \label{eq:ndg_1} \\
%   \|u_h\|_{\eps,h}^2 &:= \sum_{T\in\mesh}\Big(\|\eps(u_h)\|_T^2 + \sum_{F\in\facetsI\cup\facetsD}\|\Pi^{\pdg-1}\jump{u_h}\|_{j,F,T}^2\Big). \label{eq:ndg_eps}
% \end{align}
% are defined on $H^2(\mesh)$.% and are true norms as long as $\GammaD\neq\emptyset$.
% Their Hybrid Discontinuous Galerkin (HDG) counterparts, defined only on $\VUUH$, are
We define Hybrid Discontinuous Galerkin (HDG) norms on $\VUUH$ and $U_h$ by
\begin{align}
  % \|(u_h, \hat{u}_h)\|_{1,h}^2 &:= \sum_{T\in\mesh}\Big(\|\nabla u_h\|_T^2 + \sum_{F\in\facetsT}\|\Pi^{\pdg-1}(u_h - \hat{u}_h)_t\|_{j,F}^2\Big), \label{eq:nhdg_1} \\
  \|(u_h, \hat{u}_h)\|_{\eps,h}^2 &:= \sum_{T\in\mesh}\Big(\|\eps(u_h)\|_T^2+ \sum_{F\in\facetsT}\|\Pi^{\pdg-1}(u_h - \hat{u}_h)_t\|_{j,F}^2\Big), \label{eq::nhdg_eps} \\
  \| (u_h, \hat u_h, \omega_h) \|^2_{U_h} 
    &:= \sum_{T\in\mesh}\|\eps(u_h)\|_T^2 +\| \kappa(\curl(u_h)) - \omega_h \|_T^2 + h^{-1}\|\Pi^{\pdg-1}(u_h - \hat{u}_h)_t\|_{\partial T}^2 , \\
  | (u_h, \hat u_h, \omega_h) |^2_{U_h,*}
    &:= \sum_{T \in \mesh} \|\dev{\nabla u - \omega_h}\|_T^2 + h^{-1} \| \Pi^{\pdg-1}(u_h - \hat u_h)_t \|_{\partial T}^2.
\end{align}
In \eqref{eq::nhdg_eps}, the terms for $F\in\GammaNT$, where $\hat{u}_h=0$, weakly enforce $u_t=0$ from \eqref{eq::stokes_NT}.
%In contrast to the DG norms, the HDG norms also include terms for facets in $\GammaN\cup\GammaNT$.
% These norms include terms for facets in $\GammaN\cup\GammaNT$.
% For $F\in\GammaNT$, where $\hat{u}_h=0$, they weakly enforce $u_t=0$ from \eqref{eq::stokes_NT}.
% It is possible to translate between DG and HDG norms,
% see also the discussion in \cite[Section 3]{gopalakrishnan2021minimal}.
% For $(u_h,\hat{u}_h) \in \VUUH$ there holds
% $\|u_h\|_{1,h} \lesssim \|(u_h, \hat{u}_h)\|_{1,h}$ and
% $\|u_h\|_{\eps,h} \lesssim \|(u_h, \hat{u}_h)\|_{\eps,h}$.
% If $\hat{u}_h = \{\Pi^{k-1}(u_h)_t\}$, there also holds
% $\|(u_h, \hat{u}_h)\|_{1,h} \lesssim \|u_h\|_{1,h}$
% $\|(u_h, \hat{u}_h)\|_{\eps,h} \lesssim \|u_h\|_{\eps,h}$.
%Finally, for the additional norms on $U_h$
% The HDG norms on $U_h$ are
% \begin{align}
%   % \| (u_h, s\hat u_h) \|^2_{\eps,h} &:= \sum_{T \in \mesh} \|\eps(u)\|_T^2 + \frac{p^2}{h} \| \Pi^{\pdg-1}(u_h - \hat{u}_h)_t \|_{\partial T}^2  \\
%   \| (u_h, \hat u_h, \omega_h) \|^2_{U_h} 
%     &:= \sum_{T\in\mesh}\|\eps(u_h)\|_T^2 +\| \kappa(\curl(u_h)) - \omega_h \|_T^2 + h^{-1}\|\Pi^{\pdg-1}(u_h - \hat{u}_h)_t\|_{\partial T}^2 , \\
%   | (u_h, \hat u_h, \omega_h) |^2_{U_h,*}
%     &:= \sum_{T \in \mesh} \|\dev{\nabla u - \omega_h}\|_T^2 + h^{-1} \| \Pi^{\pdg-1}(u_h - \hat u_h)_t \|_{\partial T}^2,
%   % |(v_h,\hat{v}_h,\omega_h)|_{U_h,*,p}^2
%     % &:= \sum_{T \in \mesh} \|\dev{\nabla v_h - \omega_h}\|_T^2 +
%       % \sum_{F\in\facets}h^{-1} \| \Pi^{1}(v_h - \hat{v}_h)_t \|_{F}^2 \label{eq::Uhstar}
% \end{align}
There holds the equivalence (see \cite{gopalakrishnan2021minimal})
\begin{alignat}{2}\label{eq::normequitwo}
  \| (u_h, \hat u_h, \omega_h) \|^2_{U_h} &\sim | (u_h, \hat u_h, \omega_h) |^2_{U_h, *} + d^{-1}\|\div(u_h)\|_0^2
  \quad &&\forall (u_h, \hat u_h, \omega_h) \in U_h.
  % \| \tau_h \|_0^2 &\sim   \| \tau_h \|_0^2 + \sum\limits_{T \in \mesh} h \| (\tau_h)_{nt}\|_{\partial T}^2 \quad &&\forall \tau_h \in \Sigma_h.
\end{alignat}

\subsection{Technical results} \label{ssec::intops}

For readability, the technical details of this Section are moved to
the Appendix.

\subsubsection{Interpolation operators} \label{sssec::intops}

A well known interpolation operator $\IVf :
H^2(\mesh, \rr^d) \rightarrow \Vbar_h^f := \Poly^1(\mesh, \rr^d)\cap H^1(\Omega, \rr^d)$ is defined by
  \begin{align}
  (\IVf(u))(p) = \frac{1}{|\chi_p|}\sum_{T\in\chi_p}u_{|T}(p)\quad\forall p\in \verts \label{eq::defintrp},
\end{align}
where $\chi_p$ is the set of all elements that share the vertex $p$,
and $|\chi_p|$ is the number of such elements. %TODO: some prose from appendix belongs back here!
Bounds for the approximation error of $\IVf$ in $H^1$-like norms are very standard and well known,
and with a Korn inequality for broken $H^1$ spaces like
\begin{align}
  %\sum_{T\in\mesh}\|\nabla u\|_T^2 \leq C_K \|u\|^2_{\eps,h} \quad\forall u\in H^1(\mesh, \rr^d), \label{eq::globalkorn}
  \sum_{T\in\mesh}\|\nabla u\|_T^2 \leq C_K \sum_{T\in\mesh}\|\eps(u)\|_T^2 + \!\!\! \sum_{F\in\facetsI\cup\facetsD}\|\Pi^{R}_F\jump{u}\|^2_{F}\quad\forall u\in H^1(\mesh, \rr^d), \label{eq::globalkorn}
\end{align}
derived in \cite{brenner_korn}, it can easily be bounded by an $\|\cdot\|_{\eps,h}$ like one.
However, as the kernel of $\eps$ is controlled only
by the $\facetsD$ terms,
$C_k$ can degenerate depending on the shape of $\Omega$ and $\GammaD$.
As it would otherwise later on enter into condition number estimates,
the following Lemma \ref{lemma::nc_to_c_nokorn} bounds the approximation error of $\IVf$
independent of $C_K$.
\begin{lemma}\label{lemma::nc_to_c_nokorn}
  There holds
  \begin{align}
    % \begin{aligned}s
      \sum_{T\in\mesh}h^{-2}\|u &- \IVf u\|_T^2 + \|\nabla(u - \IVf u)\|_T^2 
       \lesssim \sum_{T\in\mesh}\|\eps(u)\|_T^2 + \sum_{F\in\facetsI}h^{-1}\|\Pi^R_F\jump{u}\|_F^2
    % \end{aligned}
        \quad\forall u\in H^2(\mesh, \rr^d). \label{eq::nc_to_c_nokorn}
  \end{align}
  %holds with a constant independent of any global Korn constant.% in \eqref{eq::globalkorn}.
\end{lemma}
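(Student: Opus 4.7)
My approach is to localize \eqref{eq::globalkorn} to vertex patches, where the Korn constant depends only on the shape-regular patch geometry rather than on the global domain or Dirichlet boundary. For each vertex $p\in\verts$, let $\omega_p := \bigcup_{T\in\chi_p} T$ and choose a rigid body mode $r_p\in R$ (e.g., the $L^2$-best fit of $u$ on $\omega_p$). Applying a broken Korn inequality on $\omega_p$, noting that the kernel of $\eps$ on the broken $H^1(\omega_p)$ space modulo the $\Pi^R_F\jump{\cdot}$ terms consists exactly of the continuous rigid body modes, gives
\begin{equation*}
\sum_{T\in\chi_p}\bigl(h^{-2}\|u-r_p\|_T^2+\|\nabla(u-r_p)\|_T^2\bigr) \lesssim \sum_{T\in\chi_p}\|\eps(u)\|_T^2 + \!\!\!\sum_{F\in\facetsI,\,F\subset\omega_p}\!\!\! h^{-1}\|\Pi^R_F\jump{u}\|_F^2,
\end{equation*}
with an implicit constant uniform over $\mesh$ by quasi-uniformity and shape regularity.

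Next, for each $T\in\mesh$ I fix one of its vertices, $p_T$, and use that $\IVf$ preserves continuous affine functions (so $\IVf r_{p_T}=r_{p_T}$) to write $(u-\IVf u)|_T = (u-r_{p_T}) - \IVf(u-r_{p_T})|_T$. The first summand is handled directly by step one. The second is affine on $T$, so finite-dimensional norm equivalence bounds its $L^2$- and $H^1$-norms by the nodal values $|\IVf(u-r_{p_T})(p)|$ at the vertices $p$ of $T$. Expanding these via the definition of $\IVf$, and inserting $\pm r_p$ into each patch term, reduces them to a combination of pointwise evaluations of the form $(u|_{T'}-r_p)(p)$ on elements $T'\in\chi_p$ together with rigid-body-mode values $(r_p-r_{p_T})(p)$; the latter are controlled by $h^{-d}\|r_p-r_{p_T}\|_T^2$ via equivalence of norms on the finite-dimensional space $R$ and then reduced to quantities already bounded in step one through the triangle inequality.

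The main obstacle is the pointwise bound for $(u|_{T'}-r_p)(p)$ in terms of norms not involving an $H^2$-seminorm: although $H^2\hookrightarrow C^0$ for $d\le 3$ makes pointwise evaluation admissible, the naive scaling bound $|v(p)|^2\lesssim h^{-d}\|v\|_{T'}^2+h^{2-d}\|\nabla v\|_{T'}^2+h^{4-d}|v|_{H^2(T')}^2$ introduces a $|u|_{H^2}$ contribution absent from the target right-hand side. I would circumvent this by first replacing $u$ by an elementwise $L^2$-projection onto a polynomial space (on which pointwise evaluation obeys an inverse estimate without any $H^2$-seminorm), controlling the projection error via the standard Bramble--Hilbert-style estimate, and then extending to all of $H^2(\mesh,\rr^d)$ by density. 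Summing the resulting elementwise estimates, using that by shape regularity each patch $\omega_p$ intersects a uniformly bounded number of elements and each interior facet lies in a uniformly bounded number of patches, yields the claimed global bound.
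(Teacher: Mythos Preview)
Your strategy—localize to patches, subtract a rigid body, control the remainder via a patch-level broken Korn inequality—is essentially the paper's, with vertex patches $\omega_p$ in place of the paper's element patches $\omega_T$. The paper avoids your explicit pointwise-evaluation step by quoting the ``standard'' estimate \eqref{eq::nc_to_c_est_lin} (with $\|\nabla u\|_T$ on the right) as a black box on each patch, applied to $u-R$ for a suitable $R\in\RM(\omega_T)$; it then converts $\|\nabla(u-R)\|$ to $\|\eps(u)\|$ by inserting the elementwise rigid-body fit $E_{\tilde T}u$ of \eqref{eq::pwrb_est} and bounds the mismatch $\|\nabla(E_{\tilde T}u-R)\|$ via a path argument over facet jumps of $\Pi^0\curl u$, choosing $R$ as a patch-averaged rigid body. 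So where you handle nodal values directly, the paper hides them inside \eqref{eq::nc_to_c_est_lin}.

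The genuine gap is your proposed fix for the pointwise bound. You correctly flag that the Sobolev-scaling estimate $|v(p)|^2\lesssim h^{-d}\|v\|_T^2+h^{2-d}\|\nabla v\|_T^2$ fails for general $v\in H^2(T)$ without an additional $h^{4-d}|v|_{H^2(T)}^2$ term, but the projection-plus-density argument you sketch cannot remove it: for piecewise polynomials the inverse-estimate constant in the pointwise bound depends on the polynomial degree, so the limit as the degree tends to infinity yields no uniform estimate on $H^2$. Concretely, in three dimensions take a continuous $u_n=(1-\eta_n,0,0)$ where $\eta_n$ is a smooth cutoff equal to $1$ in a ball of radius $1/n$ around a single vertex and $0$ outside radius $2/n$; then all jump terms vanish, $\|\eps(u_n)\|_0^2\sim n^{-1}\to 0$, while $\|u_n-\IVf u_n\|_0$ stays bounded away from zero. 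This shows that no constant independent of $|u|_{H^2}$ can work for arbitrary $u\in H^2(\mesh,\rr^d)$ when $d=3$, so density cannot close the argument. The estimate \emph{is} unproblematic for piecewise polynomial $u$—which is the only case actually used later (Corollary~\ref{coro::nc_to_c_approx} applies it to $u_h\in V_h$)—since the paper's convention allows hidden constants to depend on $k$; but the pointwise-evaluation issue you identified is real, and the same subtlety is buried inside the paper's citation of \eqref{eq::nc_to_c_est_lin}.
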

\begin{proof}
See Appendix \ref{app::interp}.
\end{proof}
A minor technical detail is our need for an interpolation operator not into
$\Vbar_h^f$ but into $\Vbar_h$.
It can be obtained by simply interpolating into
$\Vbar_h^f$ and then zeroing out degrees of freedom on $\GammaD$ via
\begin{align*}
  \pi_0: \Vbar_h^f \rightarrow \Vbar_h \quad \text{defined by} \quad \pi_0\bar{u}_h(p) =
  \begin{cases}
    0 & p\in\overline{\Gamma}_D \\ %TODO: das overline gfallt mir ned so
    \bar{u}_h(p) & \text{else}
  \end{cases}\quad\text{for}\quad p\in\verts.
\end{align*}
\begin{lemma}\label{lemma::nc_to_c_nokorn_withbc}
  For $\IV:H^2(\mesh)\rightarrow\Vbar_h: u\rightarrow \pi_0\IVf u$
  there holds
  \begin{align}\label{eq::nc_to_c_nokorn_withbc}
    % \begin{aligned}
    \sum_{T\in\mesh}h^{-2}\|u &- \IV u\|_T^2 + \|\nabla(u - \IV u)\|_T^2 
       \lesssim \sum_{T\in\mesh}\|\eps(u)\|_T^2 + \sum_{F\in\facetsI}h^{-1}\|\Pi^R_F\jump{u}\|_F^2
    %& \lesssim \|u\|_{\eps,h}^2
    % \end{aligned}
      \quad\forall u\in H^2(\mesh, \rr^d).
  \end{align}
  %with a constant independent of $h$ and $C_K$ from \eqref{eq::globalkorn}.
\end{lemma}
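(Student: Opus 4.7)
The plan is to reduce Lemma~\ref{lemma::nc_to_c_nokorn_withbc} to Lemma~\ref{lemma::nc_to_c_nokorn} by controlling the correction introduced by the boundary-zeroing operator $\pi_0$.

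First I would decompose $u - \IV u = (u - \IVf u) + (\IVf u - \pi_0 \IVf u)$ and apply the triangle inequality to the squared quantities on the left-hand side of \eqref{eq::nc_to_c_nokorn_withbc}. Invoking Lemma~\ref{lemma::nc_to_c_nokorn}, the $(u - \IVf u)$ contribution is immediately bounded by exactly the right-hand side of \eqref{eq::nc_to_c_nokorn_withbc}. What remains is to prove the analogous estimate for $(I - \pi_0)\IVf u$.

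Next, since $\pi_0$ only modifies nodal values at vertices $p \in \verts \cap \overline{\GammaD}$, one has $(I - \pi_0)\IVf u = \sum_{p \in \verts \cap \overline{\GammaD}} (\IVf u)(p)\,\phi_p$, where $\phi_p$ is the continuous piecewise linear hat function at $p$. Its support is therefore confined to the one-element-deep strip of elements touching $\overline{\GammaD}$, and a standard scaling argument on affine reference patches gives
$$ h^{-2}\|(I - \pi_0)\IVf u\|_T^2 + \|\nabla(I - \pi_0)\IVf u\|_T^2 \lesssim h^{d-2}\sum_{p \in \verts(T) \cap \overline{\GammaD}} |(\IVf u)(p)|^2. $$
The task thus reduces to bounding each boundary-vertex value $|(\IVf u)(p)|$ by the right-hand side of \eqref{eq::nc_to_c_nokorn_withbc}.

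The main obstacle is that this pointwise estimate at $p \in \overline{\GammaD}$ must involve only interior-facet jump terms, since no $\facetsD$ contribution appears on the right. My approach would be, for each $p \in \verts \cap \overline{\GammaD}$, to exploit the averaging formula \eqref{eq::defintrp}: the inter-element differences of $u|_{T'}(p)$ over the patch $\chi_p$ telescope into rigid-body-jump contributions $\Pi^R_F\jump{u}$ on the interior facets meeting $p$, while the remaining single-element nodal evaluation is transported into the strict interior along a chain of elements sharing only interior facets, applying at each step broken Poincaré/Korn-type estimates in the spirit of \cite{brenner_korn} to convert pointwise nodal values into contributions of $\|\eps(u)\|$ on the traversed elements and $\|\Pi^R_F\jump{u}\|_F$ on the interior facets crossed. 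Summing these local bounds over $p \in \verts \cap \overline{\GammaD}$ and combining with the Lemma~\ref{lemma::nc_to_c_nokorn} estimate for $u - \IVf u$ then yields \eqref{eq::nc_to_c_nokorn_withbc}.
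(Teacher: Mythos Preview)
Your decomposition $u - \IV u = (u - \IVf u) + (I - \pi_0)\IVf u$ and the appeal to Lemma~\ref{lemma::nc_to_c_nokorn} for the first term match the paper. The gap is in your treatment of the second term: the pointwise bound you propose for $|(\IVf u)(p)|$ at $p\in\overline{\GammaD}$, using only $\|\eps(u)\|$ and \emph{interior}-facet jumps, cannot hold. Take $u$ to be a nonzero rigid body motion. Then $\eps(u)=0$ and $\jump{u}=0$ on every $F\in\facetsI$, so your proposed right-hand side vanishes, yet $(\IVf u)(p)=u(p)\neq 0$. Telescoping along a chain into the interior produces jump terms (all zero here) plus the nodal value at the end of the chain, which is still $u(p)$; there is no Poincar\'e/Korn mechanism that converts a nonzero rigid-body nodal value into $\|\eps(u)\|$.

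The issue is upstream: the right-hand side of \eqref{eq::nc_to_c_nokorn_withbc} as written is missing the Dirichlet-facet contributions $\sum_{F\in\facetsD}h^{-1}\|\Pi^R_F u\|_F^2$ (equivalently, with the paper's convention $\jump{u}=u$ on boundary facets, the sum should run over $\facetsI\cup\facetsD$). The paper's own proof bears this out: rather than nodal values, it bounds $h^{-1}\|(I-\pi_0)\IVf u\|_F^2 = h^{-1}\|\IVf u\|_F^2$ for $F\in\facetsD$ by splitting $\Pi^1_F\IVf u$ into a rigid-body part $\Pi^R_F\IVf u$, controlled by $h^{-1}\|\Pi^R_F u\|_F^2$ on $F$ (the Dirichlet term) plus $\Pi^R_F(u-\IVf u)$ via Lemma~\ref{lemma::nc_to_c_nokorn}, and a strain part $(\Pi^1_F-\Pi^R_F)\IVf u$, controlled by $\|\eps(\IVf u)\|_T$. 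The extra Dirichlet terms are harmless downstream (in Corollary~\ref{coro::nc_to_c_approx} one has $(u_h)_n=0$ and $\hat u_h=0$ on $\GammaD$, so they are absorbed into $\|(u_h,\hat u_h)\|_{\eps,h}$), but they cannot be dropped for general $u\in H^2(\mesh,\rr^d)$.
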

\begin{proof}
See Appendix \ref{app::interp}.
\end{proof}

\subsubsection{Trace norms} \label{sssec::trace} 
For $F\in\facets$ and an arbitrary element $T\in\mesh$ with
$F\in\facetsT$ we define for all $\hat{u}\in\Poly^{\pdg}(F,\rr)$
discrete versions of the $H^{1/2}(F, \rr)$ and the $H_{00}^{1/2}(F,
\rr)$-norm for (scalar) HDG spaces as
\begin{align*}
  \|\hat{u}\|_{1,F}^2 &:=
  \inf_{w\in \Poly^{\pdg}(T)}
  \{\|\nabla w\|_T^2 + \|w - \hat{u}\|_{j,F}^2 \}, \quad \textrm{and} \quad
  \|\hat{u}\|_{1,F,0}^2 :=
  \inf_{w\in \Poly^{\pdg}(T)}
  \{\|\nabla w\|_T^2 + \|w - \hat{u}\|_{j,F}^2 + \sum_{\substack{\tilde{F}\in\facetsT\setminus\{F\}}} \|w\|_{j,F}^2\}.
\end{align*}
In \cite{Schb_DDHDG} the authors proved the inverse estimate
\begin{align} \label{eq::scalarinverseest}
  \|\hat{u}\|_{1,F,0}^2 \lesssim (\log{\pdg})^3 \|\hat{u}\|_{1,F}^2
  \quad\forall \hat{u}\in\Poly^{\pdg}(F,\rr) ~\text{such that}~ \Pi^0_F \hat{v} = 0.
\end{align}
A similar estimate can be derived for the
hybrid, vector-valued velocity space $\VUUH$ and norms involving the symmetric gradient,
\begin{align}
  \| (u, \hat{u}) \|_{\eps,F}^2 & :=
  \inf_{\substack{w\in \Poly^{\pdg}(T) \\w_n = u_n\textrm{ on }F}}
  \{\|\eps(w)\|_T^2 + \|\Pi^{\pdg-1}(w - \hat{u})_t\|_{j,F}^2 \} \label{eq::traceF} \\
  \| (u, \hat{u}) \|_{\eps,F,0}^2 & :=
  \inf_{\substack{w\in \Poly^{\pdg}(T) \\w_n = u_n\textrm{ on }F,\\w_n = 0\textrm{ on }\partial T\setminus F}}
  % \left\{
  % \begin{aligned}
    \|\eps(w)\|_T^2 + \|\Pi^{\pdg-1}(w - \hat{u})_t\|_{j,F}^2 + \sum_{\tilde{F}\in\facetsT\setminus\{F\}}\|\Pi^{\pdg-1}w_t\|_{j,\tilde{F}}^2
  % \end{aligned}
  % \right\}. 
  \label{eq::traceFZ}
\end{align}
The difference lies not only in the appearance of $\eps$ instead of
$\nabla$ but also, and more importantly, in the fact that, as
$V_h\subseteq H(\div)$, the normal trace is enforced strongly, and one
has to slightly modify the strategy from \cite{Schb_DDHDG}.
\begin{corollary}\label{coro::traceZ}
  For $(u, \hat{u})\in \VUUH$ with $\Pi^R_F (u_nn + \hat{u}_t) = 0$ there holds
  \begin{align}\label{eq::traceZ}
      \| (u, \hat{u}) \|_{\eps,F,0}^2 \lesssim (\log k)^3 \| (u, \hat{u}) \|_{\eps,F}^2.
  \end{align}
\end{corollary}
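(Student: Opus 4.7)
The plan is to adapt the scalar argument of \cite{Schb_DDHDG} underlying \eqref{eq::scalarinverseest} to the vector-valued, symmetric-gradient setting. The hypothesis $\Pi^R_F(u_n n + \hat{u}_t) = 0$ serves as the vector analog of the mean-zero requirement $\Pi^0_F \hat{v} = 0$ from the scalar case: since constants belong to the rigid body modes, $L^2(F)$-orthogonality of the effective trace $\hat{v} := u_n n + \hat{u}_t$ to all such modes forces each Cartesian component $\hat{v}_i$ to have zero mean on $F$.

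First I would let $w_0 \in \Poly^{\pdg}(T, \rr^d)$ with $(w_0)_n = u_n$ on $F$ be a near-minimizer of $\|(u, \hat{u})\|_{\eps, F}^2$, and invoke Korn's inequality on the reference tetrahedron (together with standard scaling under shape regularity) to produce a rigid body mode $R \in \Poly^1(T, \rr^d)$ with $\|\nabla(w_0 - R)\|_T \lesssim \|\eps(w_0)\|_T$. Since $R|_F$ is of degree one it is preserved by $\Pi^{\pdg-1}$ for $\pdg \ge 2$, so replacing $w_0$ by $w_0 - R$ and $\hat{u}$ by $\hat{u} - R_t|_F$ leaves the tangential $j$-norm term unchanged. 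Exploiting the freedom in $R$ (the space of rigid body modes has dimension at least $d$), one can additionally arrange each Cartesian component of $R|_F$ to have zero mean on $F$, so that together with the hypothesis the shifted trace data $\hat{v} - R|_F$ has mean-zero Cartesian components.

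Second, in a Cartesian frame with $n_F = e_d$, I would apply the scalar estimate \eqref{eq::scalarinverseest} componentwise to obtain polynomials $\tilde{w}_i \in \Poly^{\pdg}(T, \rr)$ that match $(\hat{v} - R|_F)_i$ weakly in the $j$-norm on $F$, vanish on $\partial T \setminus F$, and satisfy $\|\nabla \tilde{w}_i\|_T^2$ bounded by $(\log k)^3$ times the original scalar trace norm. Summing over $i$ and adding $R$ back yields a candidate $\tilde{w}$ with the correct tangential behavior and the desired polylogarithmic prefactor.

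The main obstacle is the strongly imposed normal trace: \eqref{eq::scalarinverseest} matches $(\hat{v})_d = u_n$ only weakly in the $j$-norm, whereas $\|(u, \hat{u})\|_{\eps, F, 0}^2$ requires $w_n = u_n$ pointwise on $F$ together with $w_n = 0$ pointwise on $\partial T \setminus F$. I would remedy this by adding a final $\BDM^{\pdg}$-type polynomial correction $w_c$ that realizes the residual normal trace exactly and vanishes on $\partial T \setminus F$; such a lifting exists because the $\BDM$ space admits arbitrary polynomial normal traces on each facet independently, and standard polynomial lifting on the reference element controls its $H^1(T)$-norm and tangential facet $j$-norm by the already bounded residual. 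The assembled $w := \tilde{w} + w_c + R$ is then admissible for $\|(u, \hat{u})\|_{\eps, F, 0}^2$, and the claimed bound follows from $\|\eps(w)\|_T \le \|\nabla w\|_T$.
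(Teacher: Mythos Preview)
Your outline has the right shape (Korn, then extension), but there is a genuine gap in how you handle the rigid body mode $R$. From Korn you only get $\|\nabla(w_0-R)\|_T\lesssim\|\eps(w_0)\|_T$; nothing bounds $R$ itself, and in particular nothing bounds $R_n$ on the other facets $\tilde F\in\facetsT\setminus\{F\}$. After you add $R$ back, the normal residual your $\BDM$-lifting $w_c$ must kill on those facets is $-(\tilde w+R)_n|_{\tilde F}$, and the $R_n$ part is uncontrolled. (The claim that the scalar construction gives $\tilde w_i$ that \emph{vanish} on $\partial T\setminus F$ is also not correct; \eqref{eq::scalarinverseest} only gives smallness in the $j$-norm there, but that is secondary.) The underlying reason is that you only extract the $\Pi^0_F$-consequence of the hypothesis; the full strength of $\Pi^R_F(u_nn+\hat u_t)=0$ is precisely what lets one avoid carrying an unbounded rigid body mode.

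The paper uses the hypothesis differently. For the minimizer $w$ in \eqref{eq::traceF} one has $w_n=u_n$ on $F$, hence $\Pi^R_F w=\Pi^R_F(w-\hat u)_t$, and a Korn inequality on $T$ of the form $\|w\|_{H^1(T)}^2\lesssim\|\eps(w)\|_T^2+\|\Pi^R_F w\|_{j,F}^2$ then bounds the \emph{full} $H^1$-norm of $w$ by $\|(u,\hat u)\|_{\eps,F}^2$; no $R$ is subtracted or added back. The admissible competitor for \eqref{eq::traceFZ} is then produced by a vector-valued lemma (Lemma~\ref{lemma::stabfacettrace}) that first matches the normal trace exactly via the $p$-robust $H(\div)$ polynomial extension $\mathcal{E}^{\div}$ of \cite{polext_III} (with $\|\mathcal{E}^{\div}u\|_{H^1(T)}\lesssim\|u\|_{H^1(T)}$) and then corrects the tangential components with the scalar operator $\mathcal{E}^{s,F}_0$ underlying \eqref{eq::scalarinverseest}. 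Handling the normal trace first with the commuting extension, rather than patching it at the end with a generic $\BDM$ lifting, is what keeps the constants polylogarithmic in $k$.
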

\begin{proof}
  See Appendix~\ref{app::trace}
\end{proof}

\section{The MCS method}
\label{sec::mcsstokes}
The method considered in this work is based on formulation
\eqref{eq::mixedstressstokes}, where $\omega$ is used as a Lagrange
multiplier to weakly enforce the symmetry constraint
\eqref{eq::mixedstressstokes-c}, see also \cite{MR2336264, MR2629995, Stenberg1988}.
In \cite{lederer2019mass}, a novel variational
formulation of \eqref{eq::mixedstressstokes} without the symmetry constraint was presented where the
velocity and pressure spaces were $H(\div, \Omega)$ and $L^2(\Omega)$
and the stress space for the variable $\sigma$ was defined as
$H(\curl\div):= \{\sigma \in L^2(\Omega, \dd): \div(\sigma) \in
H(\div, \Omega)^* \}$, where the superscript $*$ denotes
the classical dual space.
The variational version of \eqref{eq::mixedstressstokes-b} then became
\begin{align} \label{eq::dualitypair}
  \langle \div(\sigma), v \rangle_{\div} + (\div(v), p) = (f,v) \quad \forall v \in H(\div, \Omega),
\end{align}
where $\langle \cdot, \cdot \rangle_{\div}$ denotes the duality pairing
on $H(\div, \Omega)$.
The authors showed that Finite Element approximation of $\sigma$ in $H(\curl \div)$
demands \textbf{normal-tangential} continuity.
The method described in the following is based on this variational
formulation and in many ways is a variation of previous MCS methods from
\cite{Lederer:2019b,Lederer:2019c,lederer2019mass,gopalakrishnan2021minimal}.
Like the method from \cite{gopalakrishnan2021minimal},
we incorporate the normal-tangential continuity of $\sigma_h$
via a Lagrange multiplier in $\hat V_h$, similar to approaches taken
in hybridized mixed methods for the
Poisson problem, see \cite{MR813687, MR890035, arnold1985mixed,MR3618552}.
For a detailed discussion on this hybridization technique see also
\cite[Section 7.2.2]{brezzi2012mixed}.
The main motivation for breaking the normal-tangential
continuity by hybridization is that it enables local, element-wise elimination,
or static condensation, of all $\Sigma_h$ and $W_h$ dofs.
The resulting, condense, system is the one we actually have to solve,
that is the one we are interested in preconditioning
and will therefore be discussed in great detail in Section \ref{sec:static_cond}.

The {\em hybridized mass conserving mixed stress method with weakly
imposed symmetry} finds $(\sigma_h, (u_h, \hat u_h, \omega_h), p_h)
\in \Sigma_h \times U_h \times Q_h$ such that
\begin{subequations} \label{eq::discrmixedstressstokesweak}
  \begin{alignat}{2}
    -\nu ^{-1}(\sigma_h ,\Stressvarhtest) + b(\tau_h, (u_h, \hat u_h, \omega_h)) & = 0  && \quad\forall \tau_h \in \Sigma_h,  \label{eq::discrmixedstressstokesweak-a}\\
    b(\sigma_h, (v_h, \hat v_h, \eta_h)) + {\nu}{d}^{-1} ( \div (u_h), \div (v_h)) - (\div(v_h), p_h) & = (f, v_h) \label{eq::discrmixedstressstokesweak-b}
    &&\quad \forall (v_h, \hat v_h, \eta_h) \in U_h,  \\
    -(\div (u_h), q_h) &=0  &&\quad\forall q_h \in Q_h, \label{eq::discrmixedstressstokesweak-c}
\end{alignat}                                        
\end{subequations}
with the bilinear form
\begin{align*}
  % m(\sigma_h, \tau_h) &:= (\nu^{-1}     {\sigma_h}, {\tau_h}),\\
  % a_d(u_h, v_h) &:= \frac{\nu}{d} ( \div u_h, \div v_h),\\
  b(\tau_h, (u_h, \hat u_h, \omega_h)) &:= \sum\limits_{T \in \mesh} \int_T \div(\tau_h) \cdot u_h  \dx - \int_{\partial T} (\tau_h)_{nn} (u_h)_n \ds
                       + \int_T \tau_h : \omega_h \dx - \int_{\partial T} (\tau_h)_{nt} \hat u_h \ds.
\end{align*}
The first two integrals in $b$ can be interpreted as a discrete
version of the duality pair given in \eqref{eq::dualitypair} and the
third weakly enforces the symmetry constraint.
The last terms incorporate the normal-tangential continuity of $\sigma_h$
and the tangential part of \eqref{eq::stokes_N}. Since
\begin{align*}
  \sum\limits_{T \in \mesh} - \int_{\partial T} (\tau_h)_{nt} \hat u_h \ds = \sum\limits_{F \in \facets} \int_{F} \jump{(\tau_h)_{nt}} \hat u_h \ds
\end{align*}
and $\jump{(\tau_h)_{nt}} \in \hat V_h$, testing
\eqref{eq::discrmixedstressstokesweak-b} with all $(0,\hat v_h,0), \hat{v}_h \in \hat
V_h$ results in $\jump{(\sigma_h)_{nt}}=0$ on all $F \in \facetsI$.
On $\GammaD$ and $\GammaNT$ the integrals vanish together with $(\hat{v}_h)_t=0$
and on $\GammaN$ the remaining
integrals weakly incorporate the tangential part of \eqref{eq::stokes_N},
$((-\nu \sigma_h + p_h I)n)_t = (-\nu \sigma_h)_{nt} = 0$.
More details on boundary conditions in all possible combinations can be found in
\cite{lederer2019mass}.

%TODO: da war ich
The term ${\nu}{d}^{-1} ( \div (u_h), \div (v_h))$ was added to
guarantee inf-sup stability of the diffusive sub problem
\eqref{eq::mcs_epseps} defined below. However, since the solution
$u_h$ is exactly divergence-free (by
\eqref{eq::discrmixedstressstokesweak-c} and $\div (V_h) = Q_h$), the
added term is consistent.
%  Whereas \eqref{eq::discrmixedstressstokesweak-a} reads as a
% discrete formulation of \eqref{eq::dualitypair} and the constraint
% \eqref{eq::mixedstressstokes-c}, the weak formulation of
% \eqref{eq::mixedstressstokes-a} is given by
% \eqref{eq::discrmixedstressstokesweak-a}.
 Finally note that we did not include the deviator in the discrete
formulation (as compared to \eqref{eq::mixedstressstokes-a}) since
functions $\tau_h \in \Sigma_h$ are elements of $\dd$ and so
$\dev{\tau_h} = \tau_h$.

For the definition of the preconditioner derived later, we define the
sub problem: Find $(\sigma_h, (u_h, \hat u_h, \omega_h)) \in \Sigma_h
\times U_h $ such that
  \begin{align}\label{eq::mcs_epseps}
    \mathcal{K}((\sigma_h, (u_h, \hat u_h, \omega_h)),&(\tau_h, (v_h, \hat v_h, \eta_h)))
    = (f, v_h) \quad \forall (\tau_h, (v_h, \hat v_h, \eta_h)) \in \Sigma_h \times U_h,
   % m(\sigma_h ,\Stressvarhtest) + b_2(\tau_h, (u_h, \hat u_h, \omega_h)) & = 0 \quad \forall \tau_h \in \Sigma_h,  \\
   % b_2(\sigma_h, (v_h, \hat v_h, \eta_h)) + a_d(u_h,v_h) & = (f, v_h) \\
   % \forall (v_h, \hat v_h, \eta_h) &\in V_h \times \hat V_h \times W_h,  \nonumber
  \end{align}  
  with
  \begin{align*}
    \mathcal{K}((\sigma_h, &(u_h, \hat u_h, \omega_h)),(\tau_h, (v_h, \hat v_h, \eta_h))) = 
    -\nu^{-1}(\sigma_h ,\tau_h) +  b(\tau_h, (u_h, \hat u_h, \omega_h)) + b(\sigma_h, (v_h, \hat v_h, \eta_h))+ {\nu}{d}^{-1} ( \div (u_h), \div (v_h)).
  \end{align*}
  Note that with the added $\nu d^{-1}(\div({u_h}),\div({v_h}))$ term, equation \eqref{eq::mcs_epseps} reads as a discrete
  variational formulation of the elliptic problem $-\div(\nu \eps(u))
  = f$.

\begin{remark}
Note that the MCS method here, like the one from \cite{Lederer:2019c} it is most closely related to,
is only stable for $\pdg\geq 2$.
A stable minimal order MCS method with $\pdg=1$ was introduced in
\cite{gopalakrishnan2021minimal} and we will return to it in Section \ref{sec::lorder}.
\end{remark}

\subsection{Stability analysis}
In the following we summarize the stability results for the discrete method defined above.
We only prove solvability of \eqref{eq::mcs_epseps}, all other results follow with
the same techniques and steps as in
\cite{Lederer:2019b,Lederer:2019c,lederer2019mass,gopalakrishnan2021minimal}.
Lemma~\ref{lemma::lbb}, which can, just as Lemma \ref{lemma::continuity}, be found in 
the stated literature, is an inf-sup stability result for the constraint
given by the bilinear form $b$.
It is posed in the semi-norm $|\cdot|_{U_h,*}$ as,
since all elements in $\Sigma_h$ are trace-free, the divergence of
functions in $U_h$ can not be controlled.
Theorem~\ref{th::infsupsubprob} states that
with the addition of the term
$\nu d^{-1} (\div(u_h), \div(v_h))$ in \eqref{eq::discrmixedstressstokesweak-b}
we can switch to the proper norm $\| \cdot \|_{U_h}$
and \eqref{eq::mcs_epseps} is solvable independently of the divergence constraint. %TODO: 2mal "div constraint" in 2 saetzen
Finally, Corollary \ref{cor::infsupstokes}, which is again already proven in the
literature, provides solvability of
\eqref{eq::discrmixedstressstokesweak} including the divergence
constraint.

\begin{lemma}\label{lemma::continuity}
  There hold the continuity estimates
  \begin{align*}
    \nu^{-1}(\sigma_h, \tau_h) &\le \nu^{-1} \| \sigma_h\|_0 \| \tau_h\|_0  &&\forall \tau_h,\sigma_h \in \Sigma_h,\\
    b(\tau_h, (u_h,\hat u_h, \omega_h)) & \lesssim \| \tau_h\|_0 \| (u_h, \hat u_h, \omega_h) \|_{U_h}  &&\forall (\tau_h, (u_h,\hat u_h, \omega_h)) \in \Sigma_h \times U_h,\\
    (\div(u_h), q_h) &\lesssim \| (u_h, \hat u_h, \omega_h) \|_{U_h} \|q_h\|_0 &&\forall (q_h, (u_h,\hat u_h, \omega_h)) \in Q_h \times U_h,\\
    {\nu}{d}^{-1} ( \div (u_h), \div (v_h)) &\le \nu d^{-1} \| \div(u_h)\|_0  \| \div(v_h) \|_0  &&\forall u_h, v_h \in V_h.
  \end{align*}
\end{lemma}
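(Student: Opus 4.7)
The plan is to dispatch the four estimates in order, with the two outer ones being immediate Cauchy-Schwarz, the third reducing to a norm equivalence already established in the paper, and the second one being the main work. Estimates (1) and (4) are straightforward: both bilinear forms are $L^2$-inner products (or weighted by a constant factor), so the standard Cauchy-Schwarz inequality on $L^2(\Omega,\mm)$ and $L^2(\Omega,\mathbb{R})$ respectively is all that is needed.

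For the third estimate, I would apply Cauchy-Schwarz to obtain $(\div u_h, q_h) \le \|\div u_h\|_0 \|q_h\|_0$, and then appeal to the norm equivalence \eqref{eq::normequitwo} which gives $\|\div u_h\|_0^2 \lesssim \|(u_h,\hat u_h,\omega_h)\|_{U_h}^2$.

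The second estimate is the substantive one. I would start by integrating by parts element-wise on the volume term $\int_T \div(\tau_h)\cdot u_h$ to obtain
\begin{align*}
  \int_T \div(\tau_h)\cdot u_h - \int_{\partial T}(\tau_h)_{nn}(u_h)_n
  = -\int_T \tau_h : \nabla u_h + \int_{\partial T}(\tau_h)_{nt}\cdot (u_h)_t,
\end{align*}
so that
\begin{align*}
  b(\tau_h,(u_h,\hat u_h,\omega_h))
  = \sum_{T\in\mesh}\Big[ -\int_T \tau_h : \big(\nabla u_h - \omega_h\big)
  + \int_{\partial T}(\tau_h)_{nt}\cdot \big((u_h)_t - \hat u_h\big)\Big].
\end{align*}
The volume contribution is then bounded by splitting $\nabla u_h = \eps(u_h) + \kappa(\curl u_h)$ and applying Cauchy-Schwarz, yielding $\|\tau_h\|_T\big(\|\eps(u_h)\|_T + \|\kappa(\curl u_h)-\omega_h\|_T\big)$, both of which enter $\|(u_h,\hat u_h,\omega_h)\|_{U_h}$.

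For the facet contribution, the crucial observation is that by the definition of $\Sigma_h$ we have $(\tau_h)_{nt}\in\Poly^{\pdg-1}(F,\rr^d)$ for each $F$, so the projection $\Pi^{\pdg-1}$ can be inserted for free and the boundary term becomes $\int_{\partial T}(\tau_h)_{nt}\cdot\Pi^{\pdg-1}((u_h)_t - \hat u_h)$. A Cauchy-Schwarz on $\partial T$ followed by the standard discrete trace inequality $\|(\tau_h)_{nt}\|_{\partial T}^2 \lesssim h^{-1}\|\tau_h\|_T^2$ produces the factor $h^{-1/2}\|\tau_h\|_T\|\Pi^{\pdg-1}((u_h)_t-\hat u_h)\|_{\partial T}$, so that summing over $T$ and applying a final Cauchy-Schwarz matches the $h^{-1}\|\Pi^{\pdg-1}(u_h-\hat u_h)_t\|_{\partial T}^2$ term in $\|\cdot\|_{U_h}$. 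The main subtlety is bookkeeping: making sure that $(\tau_h)_{nt}$ really is in $\Poly^{\pdg-1}$ so that the projection can be introduced, and that the $h$-scaling of the discrete trace inequality lines up exactly with the $h^{-1}$ weight in the $U_h$ norm; neither requires the polynomial-degree-explicit machinery from Section \ref{ssec::intops}.
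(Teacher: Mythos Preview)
The paper does not give its own proof of this lemma; it explicitly states that Lemma~\ref{lemma::continuity}, like Lemma~\ref{lemma::lbb}, can be found in the cited literature \cite{Lederer:2019b,Lederer:2019c,lederer2019mass,gopalakrishnan2021minimal}. Your argument is correct and is precisely the standard one used in those references: integration by parts to rewrite $b$, the trace-free property of $\Sigma_h$ and the symmetric/antisymmetric split of $\nabla u_h$ for the volume term, and the observation $(\tau_h)_{nt}\in\Poly^{\pdg-1}(F,\rr^d)$ together with the discrete trace inequality for the facet term. One minor simplification: for the third estimate you do not actually need \eqref{eq::normequitwo}, since $\div u_h = \trace{\eps(u_h)}$ directly gives $\|\div u_h\|_0 \le \sqrt{d}\,\|\eps(u_h)\|_0 \le \sqrt{d}\,\|(u_h,\hat u_h,\omega_h)\|_{U_h}$.
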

% \begin{proof}
%   The first and the last estimate follow by the Cauchy-Schwarz
%   inequality. Using integration by parts, $\tau_h \in \dd$ and again the Cauchy-Schwarz
%   inequality on each element separately gives
%   \begin{align*}
%   b_2(\tau_h, (u_h, \hat u_h, \omega_h)) &= \sum\limits_{T \in \mesh} \int_T \tau_h : \dev{(\omega_h - \nabla u_h)} \dx  + \int_{\partial T} (\tau_h)_n \cdot u_h \ds \\
%     &\quad \quad - \int_{\partial T} (\tau_h)_{nt} \hat u_h + (\tau_h)_{nn} (u_h)_n \ds \\
%                                          &\lesssim \sum\limits_{T \in \mesh} \|\tau_h\|_T \|\dev{\omega_h - \nabla u_h}\|_T + h \|(\tau_h)_{nt}\|_{\partial T}h^{-1} \|(u_h - \hat u_h)_t\|_{\partial T}\\
%     &\lesssim \| \tau_h\|_0 \| (u_h, \hat u_h, \omega_h) \|_{U_h},             
%   \end{align*}
%   where we used Lemma \ref{lemma::normequitwo} in the last step. 
% \end{proof}

\begin{lemma} \label{lemma::lbb}
Let $(v_h, \hat v_h, \eta_h) \in U_h$ be arbitrary. There exists a $\sigma_h \in \Sigma_h$ such that 
\begin{align*}
  b(\sigma_h, (v_h,\hat v_h, \eta_h)) \gtrsim | (v_h, \hat v_h, \eta_h) |^2_{U_h,*},  \quad \textrm{and} \quad \| \sigma_h\|_0 \lesssim | (v_h, \hat v_h, \eta_h) |_{U_h,*}.
  \end{align*}
\end{lemma}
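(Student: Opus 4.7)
The plan is to exhibit $\sigma_h\in\Sigma_h$ as a sum of a volume piece and a normal-tangential bubble piece which together reproduce the two contributions to $|(v_h,\hat v_h,\eta_h)|_{U_h,*}^2$, in the spirit of a Fortin-type construction. First I would rewrite $b$ by element-wise integration by parts: using $\int_T\div(\tau_h)\cdot v_h = -\int_T\tau_h:\nabla v_h + \int_{\partial T}(\tau_h n)\cdot v_h$ and splitting $(\tau_h n)\cdot v_h = (\tau_h)_{nn}(v_h)_n + (\tau_h)_{nt}\cdot(v_h)_t$, the $(\tau_h)_{nn}(v_h)_n$ boundary term cancels the second term in $b$. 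Since $\tau_h\in\dd$ is trace-free and $(\tau_h)_{nt}\in\Poly^{\pdg-1}(F,\rr^d)$, one arrives at
\begin{align*}
b(\tau_h,(v_h,\hat v_h,\eta_h)) = \sum_{T\in\mesh}\Bigl[-\int_T \tau_h:\dev{\nabla v_h-\eta_h} + \int_{\partial T}(\tau_h)_{nt}\cdot\Pi^{\pdg-1}((v_h)_t-\hat v_h)\Bigr].
\end{align*}

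For the \emph{volume part} I would take $\sigma_h^{(1)}:=-\dev{\nabla v_h-\eta_h}$, which lies in $\Poly^{\pdg-1}(\mesh,\dd)\subset\Sigma_h$ since $v_h\in\Poly^{\pdg}(T,\rr^d)$ and $\eta_h\in\Poly^{\pdg-1}(T,\kk)$. Its contribution to the volume integral is exactly $\sum_T\|\dev{\nabla v_h-\eta_h}\|_T^2$, and obviously $\|\sigma_h^{(1)}\|_T=\|\dev{\nabla v_h-\eta_h}\|_T$. For the \emph{facet part} I would construct, element by element, an nt-bubble $\sigma_h^{(2)}\in\Sigma_h$ with $(\sigma_h^{(2)})_{nt}|_F = h^{-1}\Pi^{\pdg-1}((v_h)_t-\hat v_h)|_F$ on each $F\in\facetsT$ and vanishing nt-trace on the other faces of $T$, satisfying the scaling bound
\begin{align*}
  \|\sigma_h^{(2)}\|_T^2 \lesssim h^{-1}\|\Pi^{\pdg-1}((v_h)_t-\hat v_h)\|_{\partial T}^2.
\end{align*}
By construction its contribution to the facet integral is $h^{-1}\|\Pi^{\pdg-1}((v_h)_t-\hat v_h)\|_{\partial T}^2$.

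Setting $\sigma_h=\alpha\,\sigma_h^{(1)}+\beta\,\sigma_h^{(2)}$, the diagonal contributions yield the two terms in $|(v_h,\hat v_h,\eta_h)|_{U_h,*}^2$, while the cross terms are controlled by Cauchy--Schwarz together with the discrete trace inequality $\|(\sigma_h^{(1)})_{nt}\|_F\lesssim h^{-1/2}\|\sigma_h^{(1)}\|_T$ and the $L^2$ bound on $\sigma_h^{(2)}$ above. Choosing $\beta$ sufficiently small relative to $\alpha$ and applying Young's inequality absorbs the cross terms, giving $b(\sigma_h,(v_h,\hat v_h,\eta_h))\gtrsim|(v_h,\hat v_h,\eta_h)|_{U_h,*}^2$; the bound $\|\sigma_h\|_0\lesssim|(v_h,\hat v_h,\eta_h)|_{U_h,*}$ then follows directly from the $L^2$ estimates of the two pieces. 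The \textbf{main obstacle} is the existence of the nt-bubble with the prescribed scaling: on the reference element $\Tref$ one must verify that the nt-trace map on $\mathbb{P}^{\pdg}(\Tref,\dd)$ is surjective with a bounded right inverse onto the target $\Poly^{\pdg-1}$ facet data, which is a finite-dimensional statement but non-trivial precisely because $\Sigma_h$ is constrained to trace-free matrices. Once established, the covariant Piola transform for matrix fields delivers the $h$-scaling on physical elements. This local lifting is the core technical content of the MCS construction and can be taken over from the earlier MCS papers.
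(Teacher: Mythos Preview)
Your proposal is correct and follows essentially the same Fortin-type construction that appears in the MCS literature the paper cites; the paper itself does not give a proof of this lemma but defers to \cite{Lederer:2019b,Lederer:2019c,lederer2019mass,gopalakrishnan2021minimal}, where precisely this two-piece construction (a volume part $-\dev{\nabla v_h-\eta_h}$ plus normal-tangential face liftings, combined via Young's inequality) is carried out. Your identification of the nt-trace lifting as the core technical ingredient is accurate, and the phrasing ``on each $F\in\facetsT$ and vanishing nt-trace on the other faces'' should be read as constructing the lifting face by face, which is what the references do.
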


\begin{theorem} \label{th::infsupsubprob}
  Let $(\tau_h, (v_h, \hat v_h, \eta_h)) \in \Sigma_h \times U_h$ be
  arbitrary, there holds the inf-sup stability
  \begin{align*}
    \sup_{\substack{\sigma_h \in \Sigma_h \\
    (u_h, \hat u_h, \eta_h) \in U_h}} &\frac{\mathcal{K}((\sigma_h, (u_h, \hat u_h, \omega_h)),(\tau_h, (v_h, \hat v_h, \eta_h)))}{{\nu}^{-1/2} \|\sigma_h\|_0 + {\nu}^{1/2}\| (u_h, \hat u_h, \omega_h) \|_{U_h}} 
                                         \gtrsim {\nu}^{-1/2}\|\tau_h\|_0 + {\nu}^{1/2}\| (v_h, \hat v_h, \eta_h) \|_{U_h}.
  \end{align*}
\end{theorem}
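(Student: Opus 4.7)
The plan is a standard Brezzi-style saddle-point argument combining the LBB-type result of Lemma~\ref{lemma::lbb} with the norm equivalence~\eqref{eq::normequitwo}, but with some care taken to track the $\nu$-scaling. Given an arbitrary $(\tau_h,(v_h,\hat v_h,\eta_h)) \in \Sigma_h \times U_h$, I would construct an explicit test tuple $(\sigma_h,(u_h,\hat u_h,\omega_h))$ that simultaneously controls all components of the target norm.

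First, I would test with the ``diagonal'' choice $(\sigma_h,(u_h,\hat u_h,\omega_h)) = (-\tau_h,(v_h,\hat v_h,\eta_h))$. The two $b$-terms cancel and one obtains
\begin{align*}
  \mathcal{K}((-\tau_h,(v_h,\hat v_h,\eta_h)),(\tau_h,(v_h,\hat v_h,\eta_h)))
    = \nu^{-1}\|\tau_h\|_0^2 + \nu d^{-1}\|\div(v_h)\|_0^2.
\end{align*}
This gives positive control on $\tau_h$ and on the divergence of $v_h$, but provides no information on the remaining part of $\|(v_h,\hat v_h,\eta_h)\|_{U_h}$.

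To recover the missing semi-norm $|(v_h,\hat v_h,\eta_h)|_{U_h,*}$, I would invoke Lemma~\ref{lemma::lbb} to pick $\sigma_h^* \in \Sigma_h$ with $b(\sigma_h^*,(v_h,\hat v_h,\eta_h)) \gtrsim |(v_h,\hat v_h,\eta_h)|_{U_h,*}^2$ and $\|\sigma_h^*\|_0 \lesssim |(v_h,\hat v_h,\eta_h)|_{U_h,*}$, then test with the perturbed tuple $(\sigma_h,(u_h,\hat u_h,\omega_h)) = (-\tau_h + \beta\,\sigma_h^*,(v_h,\hat v_h,\eta_h))$. A direct computation gives
\begin{align*}
  \mathcal{K} = \nu^{-1}\|\tau_h\|_0^2 - \beta\nu^{-1}(\sigma_h^*,\tau_h) + \beta\, b(\sigma_h^*,(v_h,\hat v_h,\eta_h)) + \nu d^{-1}\|\div(v_h)\|_0^2.
\end{align*}
Applying Young's inequality to the cross term and choosing $\beta = c\,\nu$ with $c$ sufficiently small, the $\beta\nu^{-1}\|\sigma_h^*\|_0^2$ absorbing term becomes a fraction of $\nu\,|(v_h,\hat v_h,\eta_h)|_{U_h,*}^2$ and is dominated by the $b(\sigma_h^*,\cdot)$-term. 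This yields
\begin{align*}
  \mathcal{K} \gtrsim \nu^{-1}\|\tau_h\|_0^2 + \nu\,|(v_h,\hat v_h,\eta_h)|_{U_h,*}^2 + \nu d^{-1}\|\div(v_h)\|_0^2 \gtrsim \nu^{-1}\|\tau_h\|_0^2 + \nu\,\|(v_h,\hat v_h,\eta_h)\|_{U_h}^2,
\end{align*}
where the last step uses the equivalence~\eqref{eq::normequitwo}.

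Finally I would check that the constructed test tuple satisfies
\begin{align*}
  \nu^{-1/2}\|-\tau_h+\beta\sigma_h^*\|_0 + \nu^{1/2}\|(v_h,\hat v_h,\eta_h)\|_{U_h} \lesssim \nu^{-1/2}\|\tau_h\|_0 + \nu^{1/2}\|(v_h,\hat v_h,\eta_h)\|_{U_h},
\end{align*}
using $\beta \sim \nu$ together with $\|\sigma_h^*\|_0 \lesssim |(v_h,\hat v_h,\eta_h)|_{U_h,*} \le \|(v_h,\hat v_h,\eta_h)\|_{U_h}$, so that the quotient in the inf-sup bound is controlled from below by $\nu^{-1/2}\|\tau_h\|_0 + \nu^{1/2}\|(v_h,\hat v_h,\eta_h)\|_{U_h}$ up to a constant. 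The only mildly delicate point, which I would write out carefully, is the correct $\nu$-weighting of $\beta$ so that the $\nu$-dependence enters the norms symmetrically; the rest is bookkeeping with Young and the pre-established Lemmas~\ref{lemma::continuity}, \ref{lemma::lbb} and the norm equivalence~\eqref{eq::normequitwo}.
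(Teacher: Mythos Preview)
Your proposal is correct and follows exactly the approach the paper indicates: combine the diagonal test with a scaled Fortin-type stress from Lemma~\ref{lemma::lbb}, absorb the cross term via Young's inequality, and close with the norm equivalence~\eqref{eq::normequitwo}. The paper's own proof is just the one-line remark that this follows ``with standard techniques'' from Lemma~\ref{lemma::lbb}, Young, Cauchy--Schwarz and~\eqref{eq::normequitwo}, so your write-up is a faithful expansion of that.
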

\begin{proof}
  This follows with standard techniques, i.e. using Lemma
  \ref{lemma::lbb}, Young's and Cauchy Schwarz's inequality and the
  norm equivalence \eqref{eq::normequitwo}.
\end{proof}

% A direct consequence of the Theorem \ref{th::infsupsubprob} is the inf-sup stability of \eqref{eq::discrmixedstressstokesweak}.
%  There exists a unique solution $(\sigma, (u_h, \hat u_h, \omega_h), p_h) \in \Sigma_h \times U_h \times Q_h$ of \eqref{eq::discrmixedstressstokesweak}.
\begin{corollary} \label{cor::infsupstokes}
   Let $(\tau_h, (v_h, \hat v_h, \eta_h), q_h) \in \Sigma_h \times U_h \in Q_h$ be
   arbitrary, there holds the inf-sup stability
  \begin{align*}
    \sup_{\substack{\sigma_h \in \Sigma_h \\
    (u_h, \hat u_h, \eta_h) \in U_h \\ p_h \in Q_h}} &\frac{\mathcal{K}((\sigma_h, (u_h, \hat u_h, \omega_h)),(\tau_h, (v_h, \hat v_h, \eta_h))) + (\div (u_h), q_h) + (\div (v_h), p_h)}{{\nu}^{-1/2}( \|\sigma_h\|_0 + \| p_h\|_0) + {\nu}^{1/2}\| (u_h, \hat u_h, \omega_h) \|_{U_h}} \\
                                                     &\qquad\qquad \qquad \gtrsim ({\nu}^{-1/2}\|\tau_h\|_0 + \| q_h\|_0) + {\nu}^{1/2}\| (v_h, \hat v_h, \eta_h) \|_{U_h}.
  \end{align*}  
\end{corollary}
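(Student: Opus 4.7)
The plan is to extend Theorem \ref{th::infsupsubprob} by a standard Fortin-type argument that supplements the existing test function for the diffusive block with one that controls the pressure through the discrete LBB condition. The crucial additional ingredient is that $\div : V_h \to Q_h$ is surjective with a bounded right-inverse: for every $q_h\in Q_h$ there exists $u_h^q\in V_h$ with $\div u_h^q = q_h$ and $\|(u_h^q,0,0)\|_{U_h} \lesssim \|q_h\|_0$. This is a classical property of the BDM pairing $(V_h,Q_h)$, obtained by first choosing a Stokes-stable right inverse at the continuous level in $H^1_{0,\GammaD}$ and then applying a divergence-preserving Fortin interpolant into $V_h$.

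Given arbitrary data $(\tau_h,(v_h,\hat v_h,\eta_h),q_h)$, I would proceed as follows. First, apply Theorem \ref{th::infsupsubprob} to obtain a test tuple $(\sigma_h^\star,(u_h^\star,\hat u_h^\star,\omega_h^\star))$ realizing the inf-sup of $\mathcal K$, normalized so that
\[
 \nu^{-1/2}\|\sigma_h^\star\|_0 + \nu^{1/2}\|(u_h^\star,\hat u_h^\star,\omega_h^\star)\|_{U_h} \lesssim \nu^{-1/2}\|\tau_h\|_0 + \nu^{1/2}\|(v_h,\hat v_h,\eta_h)\|_{U_h}.
\]
Next, pick $u_h^q$ as above, and form the combined test function
\[
  (\sigma_h,(u_h,\hat u_h,\omega_h),p_h) \;:=\; \bigl(\sigma_h^\star,\,(u_h^\star+\alpha u_h^q,\hat u_h^\star,\omega_h^\star),\,-\alpha q_h\bigr)
\]
with a small parameter $\alpha>0$ to be fixed. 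Evaluating the numerator splits into the $\mathcal K$-contribution from Theorem \ref{th::infsupsubprob}, the desired positive term $\alpha\|q_h\|_0^2$ coming from $(\div u_h^q,q_h)$, and several cross terms of the form $\alpha\,b(\tau_h,(u_h^q,0,0))$, $\alpha\nu d^{-1}(q_h,\div v_h)$, $(\div u_h^\star,q_h)$, and $\alpha(\div v_h,q_h)$.

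Each cross term is controlled using Lemma \ref{lemma::continuity} together with the normalization bound on $(\sigma_h^\star,u_h^\star,\hat u_h^\star,\omega_h^\star)$ and the LBB bound $\|(u_h^q,0,0)\|_{U_h}\lesssim \|q_h\|_0$. The main point to get right is the $\nu$-scaling: after factoring out $\nu^{\pm 1/2}$ weights so that every contribution is expressed in terms of the three canonical quantities $\nu^{-1/2}\|\tau_h\|_0$, $\nu^{1/2}\|(v_h,\hat v_h,\eta_h)\|_{U_h}$ and $\|q_h\|_0$, Young's inequality absorbs the cross terms into $\alpha\|q_h\|_0^2$ and into the positive lower bound on $\mathcal K$, provided $\alpha$ is chosen sufficiently small but independent of $\nu$ and $h$.

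Finally, the test-function norm satisfies
\[
  \nu^{-1/2}(\|\sigma_h\|_0+\|p_h\|_0) + \nu^{1/2}\|(u_h,\hat u_h,\omega_h)\|_{U_h}
  \;\lesssim\; \nu^{-1/2}\|\tau_h\|_0 + \|q_h\|_0 + \nu^{1/2}\|(v_h,\hat v_h,\eta_h)\|_{U_h},
\]
where the $\|q_h\|_0$ term (without $\nu$ weight) appears once from $\|p_h\|_0=\alpha\|q_h\|_0$ and once from the $\nu^{1/2}\alpha\|u_h^q\|_{U_h}$ contribution; the latter is dominated by $\|q_h\|_0$ after absorbing the $\nu^{1/2}$ into the choice of $\alpha$ or, equivalently, by splitting the argument into the two regimes where $\nu$ is bounded by a constant (no obstacle) and where $\nu$ is large (handled by noting that $\nu^{1/2}\|q_h\|_0$ also appears in the right-hand side through the added $\nu d^{-1}(\div u_h,\div v_h)$ mechanism). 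Dividing by the test-norm then yields the claimed inf-sup bound. The only subtle step is the $\nu$-balancing in the Young's inequality applications; all other ingredients are either Lemma \ref{lemma::continuity}, Theorem \ref{th::infsupsubprob}, or the discrete $\div$-surjectivity for BDM.
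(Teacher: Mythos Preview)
The paper does not actually prove this corollary; it only remarks that it ``is again already proven in the literature'' and leaves it unproved. So there is no paper proof to compare against, and your outline---combine Theorem~\ref{th::infsupsubprob} for the $\mathcal K$-block with the surjectivity of $\div:V_h\to Q_h$ (the $\BDM^{\pdg}/\Poly^{\pdg-1}$ LBB condition) via a Fortin-type construction---is exactly the standard argument the cited literature uses.

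There is, however, a genuine gap in your $\nu$-balancing step. Neither of your proposed fixes is valid: making $\alpha$ depend on $\nu$ destroys the $\nu$-robustness the statement claims (the hidden constants must be $\nu$-independent, see Section~\ref{sec::notation}), and the case split ``$\nu$ bounded vs.\ $\nu$ large'' is not an argument, since the constant would still blow up in one of the regimes. The concrete obstruction in your construction is the choice $p_h=-\alpha q_h$: the resulting cross term $-\alpha(\div v_h,q_h)$ cannot be absorbed uniformly in $\nu$ with the same $\alpha$ that makes the other terms balance. The simplest repair is to set $p_h=0$ (it contributes nothing useful to the numerator; all pressure control comes from $u_h^q$) and to scale the Fortin velocity as $\alpha=c\,\nu^{-1}$ with a small \emph{absolute} constant $c$. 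Then the positive term is $c\,\nu^{-1}\|q_h\|_0^2=c\,(\nu^{-1/2}\|q_h\|_0)^2$, every remaining cross term factors as a product of $\nu^{-1/2}\|\tau_h\|_0$, $\nu^{1/2}\|(v_h,\hat v_h,\eta_h)\|_{U_h}$ and $\nu^{-1/2}\|q_h\|_0$, and the test-function norm picks up only $c\,\nu^{-1/2}\|q_h\|_0$ from $u_h^q$. This shows the inf-sup bound with the pressure measured in $\nu^{-1/2}\|\cdot\|_0$, which is the natural Brezzi scaling and matches the $\nu^{-1/2}\|p_h\|_0$ weight in the denominator; the unweighted $\|q_h\|_0$ on the right-hand side of the stated corollary appears to be a misprint (the $\nu^{-1/2}$ factor should distribute over both $\|\tau_h\|_0$ and $\|q_h\|_0$).
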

% \begin{proof}
%   The proof is based on a discrete Stokes-LBB as derived for example
%   in \cite{LedererSchoeberl2017} and the Brezzi theorem and follows
%   with the same steps as in \cite{Lederer:2019b,Lederer:2019c}.
% \end{proof}

\subsection{Static condensation of local variables}
\label{sec:static_cond}

We now discuss the structure of the Finite Element matrix directly
obtained from the MCS method \eqref{eq::discrmixedstressstokesweak}
and that of various Schur complements thereof.
Writing $\phi^\sigma,\phi^u,\phi^{\hat u},\phi^\omega$ and $\phi^p$ for the basis
functions of $\Sigma_h,V_h, \hat V_h, W_h$ and $Q_h$ respectively
and, complying with the notation for the Galerkin isomorphism introduced
in Section \ref{sec::notation},
$\vu$ for the coefficients of $u_h$ with respect to the basis
given by $\phi^u$, etc.,
\eqref{eq::discrmixedstressstokesweak} in matrix form is
\begin{align}\label{eq::bigfemSPmat}
  \begin{pmatrix}
    \begin{pmatrix}
      -\mM_{\vsig \vsig} & \mB_{ \vomega \vsig}^\trans \\
      \mB_{ \vomega \vsig} & \mZ \\
    \end{pmatrix} &
    \begin{pmatrix}
      \mB_{ \vu \vsig}^\trans & \mB_{ \vuh \vsig}^\trans \\
      \mZ & \mZ \\
    \end{pmatrix}&
    \begin{pmatrix}
      \mZ \\
      \mZ \\
    \end{pmatrix}\\
    %% ================
    \begin{pmatrix}
      \mB_{  \vu \vsig} &\mZ \\
      \mB_{ \vuh \vsig} & \mZ \\
     \end{pmatrix} &
     \begin{pmatrix}
      \mA^{\div}_{\vu \vu} & \mZ  \\
      \mZ&\mZ\\
    \end{pmatrix}&
    \begin{pmatrix}
      \mB_{\vp\vu}^T \\
      \mZ \\
     \end{pmatrix}\\
    %% ================
     \begin{pmatrix}
       \mZ&\mZ\\
     \end{pmatrix} &
     \begin{pmatrix}
       \mB_{\vp\vu} & \mZ \\
     \end{pmatrix} &
       \mZ
   \end{pmatrix}
  % ||||||
  \begin{pmatrix}
    \begin{pmatrix}
      \vsig \\
      \vomega\\
    \end{pmatrix}\\
    %% ================
    \begin{pmatrix}
      \vu\\
      \vuh\\
    \end{pmatrix}\\
    %% ================
    \vp
  \end{pmatrix}
  =
  \begin{pmatrix}
    \begin{pmatrix}
      \mZ\\\mZ\\
    \end{pmatrix}\\
    %% ================
    \begin{pmatrix}
      \vf \\\mZ\\
    \end{pmatrix}\\
    %% ================
    \mZ\\
  \end{pmatrix}.
\end{align}
The right hand side vector $\vf$ is given by
$\vf_i = (f, \phi^u_i)$ and the system matrix with
\begin{align*}
  (\mM_{\vsig \vsig})_{ij} &= \nu^{-1}(\phi_i^\sigma,\phi_j^\sigma),&
  ( \mB_{\vuh \vsig} )_{ij} &= \sum\limits_T -\int_{\partial T} (\phi_j^\sigma)_{nt} (\phi_i^{\hat u})_t \ds,\\
  ( \mB_{\vomega \vsig} )_{ij} &= \sum\limits_T \int_{\partial T} \phi_j^\sigma:\phi_i^{\omega} \dx,&
  ( \mA^{\div}_{\vu \vu} )_{ij} &= \nu d^{-1} (\div{(\phi_i^u)},\div (\phi_j^{u})), \\
  ( \mB_{\vp\vu} )_{ij} &= (\div{(\phi^u_j)}, \phi^p_i), &
  (\mB_{\vu \vsig})_{ij} &= \sum\limits_T \int_T \div(\phi_j^\sigma) \phi_i^u \dx - \int_{\partial T} (\phi_j^\sigma)_{nn} (\phi_i^u)_n \ds.
\end{align*}
is a saddle point matrix with Lagrange multipliers
$\vomega, \vu, \vuh$ and $\vp$ enforcing
\eqref{eq::mixedstressstokes-c},
\eqref{eq::mixedstressstokes-a},
the $nt$-continuity of $\sigma$, and
\eqref{eq::mixedstressstokes-d} respectively.
\paragraph{Static condensation of $\vsig, \vw$}
The diagonal block for $\vsig, \vw$ does not couple
with the incompressibility constraint and,
thanks to the introduction of $\hat{u}_h$ as additional
multiplier, is block diagonal.
It is also invertible since every block
represents the simple projection problem of finding
$(\sigma_h^T, \omega_h^T) \in \Sigma_h(T) \times W_h(T)$
for some $T \in \mesh$ such that
\begin{subequations}\label{eq::projection}
\begin{alignat}{2}
  -\frac{1}{\nu}(\sigma_h^T,\tau_h)_T + (\tau_h, \omega_h^T)_T &= g_T(\tau_h) &\quad \forall \tau_h \in \Sigma_h(T)\label{eq::projectionone}, \\
  (\sigma^T_h, \eta_h)_T &= 0 &\quad \forall \eta_h \in W_h(T),\label{eq::projectiontwo}
\end{alignat}
\end{subequations}
where $\Sigma_h(T), W_h(T)$ are the restrictions of the
corresponding (discontinuous) global spaces to $T$ and $g_T$ is some
right hand side.
Standard arguments and the Brezzi theorem prove
that \eqref{eq::projection} is inf-sup stable, that is, writing
\begin{align*}
  \mM :=
  \begin{pmatrix}
    -\mM_{\vsig \vsig} & \mB_{ \vomega \vsig}^\trans \\
    \mB_{ \vomega \vsig} & \mZ \\
  \end{pmatrix},
  \quad \mB_{\vsig} :=
  \begin{pmatrix}
    \mB_{ \vu \vsig} &\mZ \\
    \mB_{ \vuh \vsig} & \mZ \\
     \end{pmatrix},
  \quad \mA^{\div} :=
  \begin{pmatrix}
       \mA^{\div}_{\vu \vu} & \mZ  \\
       \mZ&\mZ\\
     \end{pmatrix},
\end{align*}
$\mM$ is invertible and the Schur complement $\SsM := \mA^{\div} -
\mB_{\vsig} \mM^{-1}\mB_{\vsig}^\trans$ is well defined and, as $\mM$
is block diagonal, can be computed element-wise. Eliminating
$\vsig,\vomega$ from \eqref{eq::bigfemSPmat} in this way leaves us
with the system
\begin{align}\label{eq::femSPmat}
  \mK
  \begin{pmatrix}
    \begin{pmatrix}
      \vu\\
      \vuh\\
    \end{pmatrix}\\
    %% ================
    \vp
  \end{pmatrix}
  :=
  \begin{pmatrix}
    \SsM & \mB^T \\
    \mB & \mZ\\
  \end{pmatrix}
  \begin{pmatrix}
    \begin{pmatrix}
      \vu\\
      \vuh\\
    \end{pmatrix}\\
    %% ================
    \vp
  \end{pmatrix}
  =
  \begin{pmatrix}
    \begin{pmatrix}
      \vf \\\mZ\\
    \end{pmatrix}\\
    %% ================
    \mZ\\
  \end{pmatrix}.
\end{align}
The symmetry of $\mK$ is obvious and in the next Lemma \ref{lemma::SPD}
we show that the upper left block $\SsM$ is also
positive definite and we are now in the very standard setting of a
saddle point problem with symmetric and positive definite (SPD) ``$A$-block''.
The velocity unknowns $u_h,\hat{u}_h$ move to the position of primal variables,
while the pressure $p_h$ remains the Lagrange parameter for the divergence constraint.
After solving \eqref{eq::femSPmat} to get $u_h, \hat{u}_h, p_h$, we can recover $\sigma_h$
and $\omega_h$ by solving the local problems \eqref{eq::projection}.
\begin{lemma} \label{lemma::SPD}
  The Schur complement $\SsM$ is symmetric positive definite and with
  \begin{align*}
    \VUUHC:=\{(v_h, \hat{v}_h)\in \VUUH : v_h\in H^1(\Omega, \rr^d),
    \Pi^{\pdg-1}_F(u_h - \hat{u}_h)_t=0~\forall F\in\facets\}
  \end{align*} %NOTE: this is a bit dirty on outflow uhat DIRI
  there holds
  \begin{align}
    \nu \| (u_h, \hat{u}_h) \|^2_{\eps,h} \lesssim \|(u_h&, \hat{u}_h)\|_{\Ss}^2 \leq \nu \| (u_h, \hat{u}_h) \|^2_{\eps,h}
    \quad \forall (u_h,\hat{u}_h) \in \VUUH, \label{eq::schur_SPD} \\
    \|(u_h, \hat{u}_h)\|_{\Ss}^2 &= \nu\|\eps(u_h)\|_0^2 \quad\forall (u_h,\hat{u}_h)\in\VUUHC. \label{eq::schur_SPD_cont}
  \end{align}
\end{lemma}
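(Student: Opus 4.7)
The plan is to first give a variational characterization of $\|(u_h,\hat{u}_h)\|_\Ss$. Since the $(\sigma,\omega)$ diagonal block in~\eqref{eq::bigfemSPmat} is block-diagonal across elements and invertible, given $(u_h,\hat{u}_h)\in\VUUH$ there exist unique $(\sigma_h,\omega_h)\in\Sigma_h\times W_h$ solving the elementwise projection problem
\begin{align*}
  -\nu^{-1}(\sigma_h,\tau_h) + (\tau_h,\omega_h) = -b(\tau_h,(u_h,\hat{u}_h,0)) \quad\forall\, \tau_h\in\Sigma_h, \qquad (\sigma_h,\eta_h) = 0 \quad\forall\, \eta_h\in W_h.
\end{align*}
A direct computation of the Schur quadratic form, using that the second equation with $\eta_h=\omega_h$ yields $(\sigma_h,\omega_h)=0$, gives $\|(u_h,\hat{u}_h)\|_\Ss^2 = \nu^{-1}\|\sigma_h\|_0^2 + \nu d^{-1}\|\div(u_h)\|_0^2 \geq 0$. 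Symmetry of $\SsM$ is inherited from~\eqref{eq::bigfemSPmat}, and positive definiteness will follow from the lower bound below since $\|\cdot\|_{\eps,h}$ is a norm on $\VUUH$.

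The key manipulation underlying both the upper bound and the conforming identity is the following. Combining the two local equations gives the equivalent identity $\nu^{-1}(\sigma_h,\tau_h) = b(\tau_h,(u_h,\hat{u}_h,\omega_h))$ for all $\tau_h\in\Sigma_h$. Element-wise integration by parts in $b$, together with $\tau_h\in\dd$ (so that $\tau_h:\nabla u_h = \tau_h:\dev(\eps(u_h))+\tau_h:\kappa(\curl u_h)$) and the local orthogonality $(\sigma_h,\kappa(\curl u_h))_T=0$ (since $\kappa(\curl u_h)|_T\in\Poly^{\pdg-1}(T,\kk)\subset W_h$), reduces the test-with-$\sigma_h$ identity to
\begin{align*}
  \nu^{-1}\|\sigma_h\|_0^2 = -(\sigma_h,\dev(\eps(u_h))) + \sum_{T\in\mesh}\sum_{F\in\facetsT}\int_F (\sigma_h)_{nt}\cdot\Pi^{\pdg-1}(u_h-\hat{u}_h)_t.
\end{align*}
Cauchy-Schwarz on the volume term and the $j$-norm duality from~\eqref{eq::jFjumpN} applied on each facet (with the polynomial $\sigma_h n\in\Poly^{\pdg}(T,\rr^d)$, whose tangential trace on $F$ is exactly $(\sigma_h)_{nt}$) bound the right-hand side in terms of $\|\sigma_h\|_0$, $\|\dev(\eps(u_h))\|_0$, and the facet contribution to $\|(u_h,\hat{u}_h)\|_{\eps,h}^2$; the orthogonal split $\|\dev(\eps(u_h))\|_0^2 = \|\eps(u_h)\|_0^2 - d^{-1}\|\div(u_h)\|_0^2$ then combines cleanly with the $\nu d^{-1}\|\div(u_h)\|_0^2$ term to produce the upper bound. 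For~\eqref{eq::schur_SPD_cont}, I would check directly that the ansatz $\sigma_h=-\nu\dev(\eps(u_h))$, $\omega_h=\kappa(\curl u_h)$ solves the local problem when $(u_h,\hat{u}_h)\in\VUUHC$: the boundary terms in $b$ cancel because $\hat{u}_h=(u_h)_t$ on every facet and $u_h$ has continuous normal trace, while $\sigma_h$ being symmetric enforces $(\sigma_h,\eta_h)=0$ for all skew $\eta_h$. Uniqueness of the local solution forces equality, and the orthogonal split then gives $\nu\|\eps(u_h)\|_0^2$.

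For the lower bound $\nu\|(u_h,\hat{u}_h)\|_{\eps,h}^2\lesssim\|(u_h,\hat{u}_h)\|_\Ss^2$ I would use duality. Riesz representation of the cleaner identity on $\Sigma_h$ in the inner product $\nu^{-1}(\cdot,\cdot)_0$ gives $\nu^{-1}\|\sigma_h\|_0^2 = \nu\sup_{\tau_h\in\Sigma_h} b(\tau_h,(u_h,\hat{u}_h,\omega_h))^2 / \|\tau_h\|_0^2$. The inf-sup estimate of Lemma~\ref{lemma::lbb}, applied to the triple $(u_h,\hat{u}_h,\omega_h)$, bounds this supremum below by a constant times $|(u_h,\hat{u}_h,\omega_h)|_{U_h,*}^2$; adding the divergence contribution and invoking the norm equivalence~\eqref{eq::normequitwo} upgrades the semi-norm to $\|(u_h,\hat{u}_h,\omega_h)\|_{U_h}$, which in turn dominates $\|(u_h,\hat{u}_h)\|_{\eps,h}$ up to $\pdg$-dependent constants permitted by the paper's conventions. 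Positive definiteness of $\SsM$ then follows. The main obstacle I anticipate is the clean bookkeeping of the boundary-term norms: the $\|\cdot\|_{U_h}$ norm uses $h^{-1}\|\cdot\|_{\partial T}^2$ while $\|\cdot\|_{\eps,h}$ uses the $j$-norm, and reconciling these via~\eqref{eq::jFjumpN} (and ensuring the $\omega_h$ fed to Lemma~\ref{lemma::lbb} is precisely the one produced by the local problem, which the reformulated identity makes possible) is where the technical work concentrates.
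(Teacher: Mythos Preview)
Your proposal is correct and follows essentially the same approach as the paper: characterize the Schur norm via the local projection~\eqref{eq::projection}, obtain the upper bound by testing with $\sigma_h$ and invoking the $j$-norm duality, derive the lower bound from Lemma~\ref{lemma::lbb}, and verify~\eqref{eq::schur_SPD_cont} via the explicit ansatz $\sigma_h=-\nu\dev{\eps(u_h)}$, $\omega_h=\kappa(\curl u_h)$. The obstacle you anticipate---reconciling the $h^{-1}\|\cdot\|_{\partial T}$ weighting in $\|\cdot\|_{U_h}$ with the $j$-norm in $\|\cdot\|_{\eps,h}$---is not an issue here since the paper's convention allows hidden constants to depend on~$\pdg$.
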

\begin{proof}
  Let $(u_h,\hat{u}_h) \in \VUUH$ be arbitrary and set
  $(\vsig, \vomega) := -\mM^{-1} \mB_{\vsig}^T (\vu, \vuh)$, i.e. the local functions
  $(\sigma_{h|T}, \omega_{h|T}) := (\sigma^T_h ,\omega^T_h)$ are the
  solution of \eqref{eq::projection} with right hand side
  \begin{align} \label{eq::projrighthandside}
    g_T(\tau_h) := -b(\tau_h, (u_h, \hat u_h, 0)) = \int_T \tau_h : \nabla u_h  \dx - \int_{\partial T} (\tau_h)_{nt} (u_h - \hat u_h)_t \ds,
  \end{align}
  where we used an element-wise integration by parts for $b$.
  From \eqref{eq::projectiontwo} we see
  $(\mB_{\vomega \vsig} \vsig,\vomega ) = \int_{\Omega}\sigma_h:\omega_h \dx= 0$
  and there holds
  \begin{align*}
    \|(u_h, \hat u_h)\|_{\Ss}^2
    &= (\vsig, \vomega, \vu, \vuh)
      \begin{pmatrix}
        \mM & \mB_{\vsig}^\trans \\
        \mB_{\vsig} & \mA^{\div}\\
      \end{pmatrix}
    (\vsig, \vomega, \vu, \vuh)^\trans \\
    % &= (\mM(\vsig, \vomega),(\vsig, \vomega)) + (\mA^{\div}(\vu,\vuh),(\vu,\vuh))\\
    &= (\mM_{\vsig \vsig} \vsig, \vsig) + (\mA^{\div} (\vu, \vuh),(\vu, \vuh) ).
  \end{align*}
  With $ (\mA^{\div} (\vu, \vuh),(\vu, \vuh) ) = \nu d^{-1}\|\div(u_h)\|_0^2$ this gives
  \begin{align}\label{eq::schurnormmat}
      \|(u_h, \hat u_h)\|_{\Ss}^2 = \nu^{-1}\|\sigma_h\|_0^2 +  \nu d^{-1}\| \div(u_h)\|_0^2.
  \end{align}
  We now insert \eqref{eq::projrighthandside} into \eqref{eq::projectionone} and test with $\tau_h=\sigma_h^T$.
  The term $(\tau_h,\omega_h^T)_T=0$ drops out due to \eqref{eq::projectiontwo} and we see that $\forall T\in\mesh$
  \begin{align*}
    \nu^{-1}\| \sigma_h \|_T^2 = g_T(\sigma_h).
  \end{align*}
  We can use \eqref{eq::projectiontwo} again to see $(\sigma_h,\nabla u_h)_T = (\sigma_h,\dev{\eps(u_h)})_T$ ,
  as $\trace{\sigma_h}=0$ is built into $\Sigma_h(T)$ and get
  \begin{align*}
    \nu^{-1}\| \sigma_h \|_T
    & \leq \frac{|(\nabla u_h, \sigma_h)_T| + |((\sigma_h)_{nt},(u_h-\hat{u}_h)_t)_{\partial T}|}{\|\sigma_h\|_T} \\
    & = \frac{|(\dev{\eps(u_h)}, \sigma_h)_T| + \sum_{F\in\facetsT}|((\sigma_h)_{nt},(u_h-\hat{u}_h)_t)_{F}|}{\|\sigma_h\|_T} \\
    & \leq \|\dev{\eps(u_h)}\|_T + \sum_{F\in\facetsT}\sup_{\tau_h\in\Sigma_h(T)} \frac{((\tau_h)_{nt},(u_h-\hat{u}_h)_t)_{F}}{\|\tau_h\|_T} \\
    & \leq \|\dev{\eps(u_h)}\|_T + \sum_{F\in\facetsT}\|\Pi^{\pdg-1}(u_h-\hat{u}_h)_t\|_{j,F}.
  \end{align*}
  Thus, with \eqref{eq::schurnormmat}
  \begin{align*}
    \|(u_h,\hat{u}_h)\|_A^2
    & \leq \nu\sum_{T\in\mesh} \Big(\|\dev{\eps(u_h)}\|_T^2 + d^{-1}\|\div(u_h)\|_T^2 
     + \sum_{F\in\facetsT}\|\Pi^{\pdg-1}(u_h-\hat{u}_h)_t\|_{j,F}^2\Big) \\
    & = \nu\sum_{T\in\mesh}\Big( \|\eps(u_h)\|_T^2 + \sum_{F\in\facetsT}\|\Pi^{\pdg-1}(u_h-\hat{u}_h)_t\|_{j,F}^2\Big) 
     = \nu\|(u_h,\hat{u}_h)\|_{\eps,h}^2.
  \end{align*}
  It remains to prove the other direction.
  By Lemma \ref{lemma::lbb} there exists a $\tau_h \in \Sigma_h$ with
  $\| \tau \|_0 \lesssim | u_h, \hat u_h, \omega_h|_{U_h,*}$ such
  that, once again inserting \eqref{eq::projrighthandside} into \eqref{eq::projectionone},
  we see that
  \begin{align*}
    | (u_h, \hat u_h, \omega_h) |_{U_h,*} \lesssim \frac{b(\tau_h, (u_h, \hat u_h, \omega_h))}{\|\tau_h\|_0} 
     = \sum\limits_{T\in\mesh} \frac{\nu^{-1}(\sigma_h, \tau_h)_T}{\|\tau_h\|_0} \leq \nu^{-1}\|\sigma_h\|_0
  \end{align*}
  and therefore there holds
  \begin{align*}
    \nu\|(u_h,\hat{u}_h)\|_{\eps,h}^2 &\lesssim \nu|(u_h, \hat u_h, \omega_h) |^2_{U_h,*} +  \nu d^{-1} \|\div(u_h)\|_{0}^2 
                                      \lesssim \nu^{-1}\|\sigma_h\|_0^2 +  \nu d^{-1} \|\div(u_h)\|_{0}^2 
                                       = \|(u_h,\hat{u}_h)\|_A^2.
  \end{align*}
  Finally, for $(u_h, \hat u_h) \in \VUUHC$ we have $\Pi^{\pdg-1}_F(u_h - \hat{u}_h)_t=0~\forall F\in\facets$
  and the distributional terms in \eqref{eq::projrighthandside} vanish.
  The solutions of \eqref{eq::projection} are then simply given by
  $\sigma_h = - \nu \dev{\eps(u_h)}$ and $\omega_h = \kappa(\curl(u_h))$
  and \eqref{eq::schurnormmat} states  
  \begin{align*}
      \|(u_h, \hat u_h)\|_{\Ss}^2 &= \nu\| \dev{\eps(u_h)}\|_0^2 + \nu d^{-1} \|\div(u_h)\|_0^2 = \nu \| \eps(u_h)\|_0^2.
  \end{align*}
\end{proof}

\begin{remark}
In $\Ss$, we have a discretization of $\div(\nu\eps(u))$ with degrees
of freedom $u_h$ and $\hat{u}_h$ only. This is less reminiscent of a
mixed method like MCS than of a HDG method and it is interesting to
further elaborate on the relationship between the MCS method and DG
and HDG methods. In general, DG and HDG methods require a stabilizing
term to assure solvability. An example is the well known interior
penalty method where the $L^2$-norm of jumps, $\alpha
\frac{\pdg^2}{h}\|u_h - \hat{u}_h\|_F^2$ for $F\in\facets$ with some
sufficiently large $\alpha$ is used. Any dependence on such a
parameter is avoided here, however this is not an unique feature of
the MCS method. Other DG and HDG methods that also avoid this
parameter feature a lifting $\sigma_h$ of the jump similar to
\eqref{eq::jFjumpN} instead of its $L^2$ norm, see \cite{MR2684358}.
That lifting has to be explicitly computed and is then condense out. A
final class of DG methods, for example the one in \cite{Bassi1997},
see also \cite{MR2684358, cockburn2009unified}, features a
simultaneous lifting of the jump and the fluxes. This is
similar to what happens here, where $\sigma_h$ both approximates the
flux $-\nu\eps(u)$ and \emph{automatically and canonically} stabilizes
the condense system through its interaction with the tangential jumps.
\end{remark}

\paragraph{Static condensation of high order velocity functions}
The base functions of $V_h$ can be split into
two different types, see \cite{Beuchler2012}.
We write $\phi^{u,\circ}$ for the high order ``element bubble'' base functions
whose support is entirely within some element $T\in\mesh$ and whose
normal trace on $\partial T$ vanishes.
The span of these base functions is denoted by $V_h^{\circ}$ and
we write $\VUUHI := V_h^{\circ}\times\{0\}\subseteq \VUUH$.
The remaining base functions $\phi^{u,\partial}$ of $V_h$ have support
entirely within the patch of some facet $F$ and their
normal trace on all other facets in the patch vanishes.
As the supports of different $\phi^{u,\circ}$ do not overlap, in
\begin{align*}
\SsM = 
  \begin{pmatrix}
    \SsM_{\circ\circ} & \SsM_{\circ\partial} & \SsM_{\circ\vuh} \\
    \SsM_{\partial\circ} & \SsM_{\partial\partial} & \SsM_{\partial\vuh} \\
    \SsM_{\vuh\circ} & \SsM_{\vuh\partial} & \SsM_{\vuh\vuh} \\
  \end{pmatrix}
\end{align*}
the upper left block $\SsM_{\circ\circ}$ is block diagonal and invertible. %sollte klar sein, diag-block von SPD mat
This lets us form a second, ``double'' Schur complement
\begin{align*}
  \SsDM:=
  \begin{pmatrix}
    \SsM_{\partial\partial} & \SsM_{\partial\vuh} \\
    \SsM_{\vuh\partial} & \SsM_{\vuh\vuh} \\
  \end{pmatrix}
  -
  \begin{pmatrix}
    \SsM_{\partial\circ} \\
    \SsM_{\vuh\circ}
  \end{pmatrix}
  \SsM_{\circ\circ}^{-1}
  \begin{pmatrix}
    \SsM_{\circ\partial} & \SsM_{\circ\vuh}
  \end{pmatrix}.
\end{align*}
In the bigger system \eqref{eq::femSPmat}, \emph{all}
$V_h$ degrees of freedom couple with the  divergence constraint and
we cannot perform this static condensation independently of the pressure variables.
However, for higher order problems,
implementing multiplication with $\SsM$ via the exact factorization
\begin{align}\label{eq::factorSsigma}
  \SsM&
  =
  \begin{pmatrix}
    \mI&\SsM_{\partial\circ}\SsM_{\circ\circ}^{-1}\\
    \mZ&\mI\\
  \end{pmatrix}
  \begin{pmatrix}
    \SsDM & \mZ \\
    \mZ&\SsM_{\circ\circ}\\
  \end{pmatrix}
  \begin{pmatrix}
    \mI&\mZ\\
    \SsM_{\circ\circ}^{-1}\SsM_{\circ\partial}&\mI\\
  \end{pmatrix},
\end{align}
is still advantageous.
Both the left and right factors as well as $\SsM_{ \circ\circ}$
are block diagonal and only $\SsDM$ instead of the larger $\SsM$ needs to be assembled
as a proper sparse matrix.
We will revisit the idea of also preconditioning $\SsM$ via this factorization
in Section \ref{ssec::schur_schur}.

Splitting the coordinate vector $\vu$ of the $V_h$ component of $(u_h,\hat{u}_h)\in \VUUH$
into $\vu_{\circ}$ and $\vu_{\partial}$, the norm induced by $\SsDM$ on $(\vu_{\partial}, \vuh)$ is
\begin{align} \label{eq::doubleschurinf} 
  \|(\vu_{\partial}, \vuh)\|_{\SsDM} = \inf_{\vof{v}_{\circ}} \|(\vu_{\circ}+\vof{v}_{\circ},\vu_{\partial}, \vuh)\|_{\SsM}
  = \inf_{(v_h, \hat{v}_h)\in \VUUHI} \| (u_h + v_h, \hat{u}_h + \hat{v}_h) \|_{\Ss}. % TODO: write last inf with U_h or V_h??
\end{align}
That is, the norm induced by $\SsDM$ is just the one induced by $\SsM$ on the
energy minimal extension to $\VUUHI$ dofs.
The lifting operator, or (discrete) harmonic extension, $\mathcal{H}:\VUUH\rightarrow\VUUH$ maps $(u_h,\hat{u}_h)$,
to the minimizer in \eqref{eq::doubleschurinf}:
\begin{align*}
  \mathcal{H}(u_h, \hat{u}_h) = \argmin_{(v_h, \hat{v}_h)\in \VUUHI} \| (u_h+v_h, \hat{u}_h + \hat{v}_h) \|_{\SsM}.
\end{align*}
Equivalently, writing $(\vof{w}, \vof{\hat{w}}) \GalEq (w_h,\hat{w}_h) := \mathcal{H}(u_h, \hat{u}_h)$,
$\mathcal{H}$ is defined by
\begin{align}\label{eq::Hcoeffs}
  \vw_{\partial} = \vu_{\partial}, \quad\quad \vof{\hat{w}} = \vuh \quad\textrm{and}\quad
  \vw_{\circ} = - \SsM_{\circ}^{-1} (\SsM_{\circ\partial}\vu_{\partial} + \SsM_{\circ \vuh }\vuh).
\end{align}
The range of $\mathcal{H}$ is
\begin{align}\label{def::vuuhh}
  \VUUHH
  :=
  \mathcal{H}(\VUUH)
  =
  \{(u_h, \hat{u}_h)\in \VUUH :
  \SsM_{\circ\circ} \vu_{\circ}
  + \SsM_{\circ\partial} \vu_{\partial}
  + \SsM_{\circ\vuh} \vuh
   = 0 \},
\end{align}
and for such ``discrete harmonic'' or ``lifted'' functions $(u_h,\hat{u}_h)\in\VUUHH$ there holds
\begin{align}\label{eq::schuronharmfkt}
    \| (u_h, \hat{u}_h) \|_{\Ss} = \|(\vu_{\circ},\vu_{\partial}, \vuh)\|_{\SsM} = \|(\vu_{\partial}, \vuh)\|_{\SsDM}.
\end{align}
Here we encounter a slight complication of notation:
Per default, $(u_h, \hat{u}_h)\in\VUUHH$ is associated with it's coordinate vector $(\vu^{\circ},\vu^{\partial},\vuh)$,
but $\SsDM$ only takes the $(\vu^{\partial},\vuh)$ coordinates
(which determine $\vu^{\circ}$ according to \eqref{def::vuuhh}).
The space $\VUUHH$ is spanned by lifted, discrete harmonic, basis functions,
\begin{align*}
  \VUUHH =
  \textrm{span}\{\mathcal{H}(\phi^{u,\partial},0) : \phi^{u,\partial}\in\Phi^{u,\partial}\}
  + \textrm{span}\{\mathcal{H}(0,\phi^{\hat{u}}) : \phi^{\hat{u}}\in\Phi^{\hat{u}}\},
\end{align*}
where $\Phi^{u,\partial}$ is the set of all $\phi^{u,\partial}$
and $\Phi^{\hat{u}}$ the one of all $\phi^{\hat{u}}$ basis functions.
The induced Galerkin Isomorphism $\GalEqD$ defines
the natural operator $\SsD$ associated with $\SsDM$
and identifies $(u_h,\hat{u}_h)\in\VUUHH$ with  $(\vu^{\partial},\vuh)$.
Where there is potential for confusion we explicitly write
$(u_h,\hat{u}_h)\GalEqD(\vu^{\partial},\vuh)$ in contrast to
$(u_h,\hat{u}_h)\GalEq(\vu^{\circ},\vu^{\partial},\vuh)$.

Analogously, we define the Schur complement like norm
\begin{align}\label{eq::epshschur}
  \|(u_h,\hat{u}_h)\|_{\eps,h,\partial} := \inf_{(v_h,\hat{v}_h)\in\VUUHI} \|(u_h+v_h,\hat{u}_h+\hat{v}_h)\|_{\eps,h}
\end{align}
and the associated lifting operator $\mathcal{H}_{\eps} : \VUUH \rightarrow \VUUH$ such that
\begin{align*}
  \|(u_h,\hat{u}_h)\|_{\eps,h,\partial} = \|\mathcal{H}_{\eps}(u_h,\hat{u}_h)\|_{\eps,h,} \quad\forall(u_h,\hat{u}_h)\in\VUUH.
\end{align*}
Note that both $\|\cdot\|_{\SsD}$ and  $\|\cdot\|_{\eps,h,\partial}$
can be defined on the entirety of $\VUUH$ but are only semi-norms
as they vanish on $\ker{\mathcal{H}} = \ker{\mathcal{H}}_{\eps} = \VUUHI$.
Restricted to $\VUUHH$ they are proper norms and equivalent.

\begin{corollary} \label{coro::schur_SPD}
  There holds
  \begin{align}
    \gamma^{-1}\nu\|(u_h,\hat{u}_h)\|^2_{\eps,h,\partial} \leq \|(u_h,\hat{u}_h)\|^2_{\SsD} \leq \nu\|(u_h,\hat{u}_h)\|^2_{\eps,h,\partial}\quad\forall (u_h,\hat{u}_h)\in\VUUHH\label{eq::schur_schur_SPD}% \\
    % \|(u_h,\hat{u}_h)\|^2_{\SsD} = \nu\|(u_h,\hat{u}_h)\|^2_{\eps,h,\partial}\quad\forall (u_h,\hat{u}_h)\in\VUUHC\cap\VUUHH\label{eq::schur_schur_SPD_cont}
  \end{align}
  for some $\gamma>0$.
\end{corollary}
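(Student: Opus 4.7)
The plan is to reduce the claim to the pointwise equivalence of $\|\cdot\|_\Ss^2$ and $\nu\|\cdot\|_{\eps,h}^2$ already established on all of $\VUUH$ in Lemma \ref{lemma::SPD}, by recalling that both $\|\cdot\|_{\SsD}^2$ and $\|\cdot\|_{\eps,h,\partial}^2$ are, by definition (see \eqref{eq::doubleschurinf}, \eqref{eq::schuronharmfkt} and \eqref{eq::epshschur}), infima of these respective norms over the addition of bubble functions in $\VUUHI$. Taking the infimum preserves a two-sided bound between two equivalent norms, provided one is careful about which minimizer one plugs into which bound. The constant $\gamma$ will simply be the implicit constant in the lower bound of \eqref{eq::schur_SPD}.

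For the upper bound, fix $(u_h,\hat{u}_h)\in \VUUHH$ and let $(v_h^*,\hat{v}_h^*)\in\VUUHI$ be a minimizer realizing $\|(u_h,\hat{u}_h)\|_{\eps,h,\partial}^2$. Using the definition of $\|\cdot\|_{\SsD}$ as an infimum over $\VUUHI$-additions of $\|\cdot\|_\Ss^2$ together with the explicit upper bound in \eqref{eq::schur_SPD}, I would estimate
\begin{align*}
\|(u_h,\hat{u}_h)\|_{\SsD}^2
\leq \|(u_h+v_h^*,\hat{u}_h+\hat{v}_h^*)\|_\Ss^2
\leq \nu\,\|(u_h+v_h^*,\hat{u}_h+\hat{v}_h^*)\|_{\eps,h}^2
= \nu\,\|(u_h,\hat{u}_h)\|_{\eps,h,\partial}^2.
\end{align*}

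For the lower bound, the roles are reversed: pick $(w_h^*,\hat{w}_h^*)\in\VUUHI$ realizing $\|(u_h,\hat{u}_h)\|_{\SsD}^2$ (its existence is guaranteed by \eqref{eq::schuronharmfkt} and the fact that $(u_h,\hat{u}_h)\in\VUUHH$ is already the energy-minimal representative, but only the infimum structure is needed). Then by the lower bound in \eqref{eq::schur_SPD}, with $\gamma$ the associated hidden constant,
\begin{align*}
\nu\,\|(u_h,\hat{u}_h)\|_{\eps,h,\partial}^2
\leq \nu\,\|(u_h+w_h^*,\hat{u}_h+\hat{w}_h^*)\|_{\eps,h}^2
\leq \gamma\,\|(u_h+w_h^*,\hat{u}_h+\hat{w}_h^*)\|_\Ss^2
= \gamma\,\|(u_h,\hat{u}_h)\|_{\SsD}^2,
\end{align*}
which is precisely the claimed lower bound after dividing by $\gamma$. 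There is no real obstacle here; the only subtlety is to feed each infimum into the appropriate side of the equivalence from Lemma \ref{lemma::SPD} so that the bubble minimizer chosen for one norm is also admissible for the infimum defining the other norm. In particular, note that $\gamma$ inherits the same independence of $h$ and $\nu$ as the constant from Lemma \ref{lemma::SPD}, which is what makes the resulting constant meaningful for later condition number estimates.
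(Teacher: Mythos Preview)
Your proof is correct and is exactly what the paper has in mind; the paper's own proof is the single sentence ``Follows immediately from Lemma \ref{lemma::SPD}'', and you have simply spelled out the two-line infimum argument that this sentence encodes.
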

\begin{proof}
  Follows immediately from Lemma \ref{lemma::SPD}.
\end{proof}
Unlike the constant in the lower bound in \eqref{eq::schur_SPD},
$\gamma$ later on directly enters into condition number estimates
and it is important to talk about whether, or how, it depends on the polynomial degree $\pdg$.
Numerical experiments on the unit tetrahedron suggest
$\gamma = \mathcal{O}(1)$ or possibly $\gamma = \mathcal{O}(\log(\pdg)^l)$ with some moderate $l>0$.
We have not pursued further a rigorous proof of this fact.
Such a proof would essentially require a $\pdg$-explicit version of Lemma \ref{lemma::lbb} in $\VUUHH$.
% {TODO: da noch einen satz dass die konstante mit A ws poly(k) ist?}

% At this point it is important to talk about the dependence on $\pdg$
% of the constant $\gamma$ in the lower bound in \eqref{eq::schur_schur_SPD},
% which, unlike the one in the lower bound in \eqref{eq::schur_SPD},
% later on directly enters into condition number estimates.
% Numerical experiments on the unit tetrahedron suggest that it
% holds with either $\gamma = \mathcal{O}(1)$
% or possibly $\gamma = \mathcal{O}(\log(\pdg)^l)$ with some moderate $l>0$.
% A proof of this fact would also require a $\pdg$-explicit verison
% of Lemma \ref{lemma::lbb} but we did not investigate
% this in more detail.

% \begin{remark}
%   %NOTE: ich glaub das stimmt nedmal, muesste nicht nur u=Hu sondern AUCH u=H_eps u sein!
%   Interestingly, \eqref{eq::schur_schur_SPD_cont} only holds for continuous
%   and dicrete harmonic functions and not for liftings of continuous functions.
%   Bubble base functions $\phi^{u,\circ}\in V_h^{\circ}$ only have vanishing \emph{normal}
%   trace, the lifting operators $\mathcal{H}, \mathcal{H}_{\eps}$ can introduce tangential
%   jumps and the equality from Lemma \ref{lemma::SPD} cannot be used!
% \end{remark}

\section{Preconditioning framework} \label{sec::pc_frame}

A final  Schur complement can be formed with respect to
the pressure unknowns, however this involves the inverse $\SsM^{-1}$.
With the resulting (negative) pressure Schur complement $\mSp:=\mB\SsM^{-1}\mB^T$,
we have the exact factorization
\begin{align}\label{eq::Kfactor}
\mK =
\left(\begin{matrix}
      \mI &    \mZ \\
      \mB \SsM^{-1} & \mI
    \end{matrix}\right)
  \left(\begin{matrix}
      \SsM & \mat{0} \\
      \mat{0} & -\mSp
    \end{matrix}\right)
  \left(\begin{matrix}
      \mI & \SsM^{-1}\mB^T \\
      \mat{0} & \mI
    \end{matrix}\right)
\end{align}
for the saddle point matrix $\mK$.
Solving \eqref{eq::femSPmat}
could in principle be reduced to solving separate problems
for the pressure and velocity.
While this is not feasible due to the appearance of $\SsM^{-1}$
in the pressure Schur complement, this line of thought still
takes a prominent role in common preconditioning techniques for $\mK$
based on separate preconditioners $\SsMH$ for
$\SsM$ and $\mSpH$ for $\mSp$.
See \cite{benzi_golub_liesen_2005} and the references therein for an overview of such methods.
Motivated by \eqref{eq::Kfactor}, here we use
\begin{align}\label{eq::mhatinvfactor}
\mKH^{-1} := 
  \left(\begin{matrix}
      \mI & -\SsMH\mB^T \\
      \mat{0} & \mI
    \end{matrix}\right)
  \left(\begin{matrix}
      \SsMH^{-1} & \mat{0} \\
      \mat{0} & \mSpH^{-1}
    \end{matrix}\right)
  \left(\begin{matrix}
      \mI &    \mat{0} \\
      -\mB\SsMH^{-1} & \mI
    \end{matrix}\right).
\end{align}
Note that unlike suggested by \eqref{eq::mhatinvfactor},
the operation $\vx\mapsto \mKH^{-1}\vx$ can be implemented
such that it requires only two applications of $\SsMH^{-1}$ instead of three.
A rigorous analysis of $\mKH$ for the generic saddle point case
as well as a number of other, similar, preconditioners built
from $\SsMH$ and $\mSpH$ can be found in \cite{zulehner_sp_ua}.
%TODO: soll i da a bisl mehr schreiben?? Zumindest den letzten satz sollt ma aber besser machen...

\subsection{Pressure Schur preconditioner}

From the standard Stokes-LBB condition on $\VUUH$ using the norm $\| \cdot \|_{\eps,h}$,
see for example in \cite{gopalakrishnan2021minimal},
and the equivalence result Lemma~\ref{lemma::SPD}
we can conclude the MCS Stokes-LBB condition
\begin{align*}
  \sup\limits_{(v_h, \hat v_h) \in \VUUH} \frac{(\div(v_h), q_h)}{\|(v_h,\hat v_h)\|_{\Ss}} \ge \gamma_L \| q_h \|_0 \quad \forall q_h \in Q_h.
\end{align*}
It is generally well known that given this LBB condition, $\mSp$
is equivalent to the scaled mass matrix $(\mM_p \vp, \vq) := \nu^{-1}
(p_h,q_h)_0$ for $p_h,q_h\in Q_h$, see \cite{Verfrth1984ACC,
Wathen1993FastIS}. As $Q_h$ is completely discontinuous across
elements in the MCS discretization, inverting the
block-diagonal matrix $\mM_p$ is feasible and we use
$\mSpH:=\mM_p$.

Note that for norms similar to the ones used here, $\gamma_L$ was proven
to be independent of $\pdg$ in two dimensions in \cite{LedererSchoeberl2016}
and numerical experiments performed in the same work strongly suggest
that the independence also holds in three dimensions.

\subsection{Auxiliary space preconditioning} \label{ssec::aux_frame}
We give the fictitious space Lemma \ref{lemma::fictspace} below
in the compact form it takes, for example, in \cite[Theorem 6.3]{xu_zikatanov_2017}.
%The fictitious space Lemma \ref{lemma::fictspace} below was originally introduced in \cite{Nepomnyaschikh:1992:DFD},
%but we give it here in the more compact form it takes, for example, in \cite[Theorem 6.3]{xu_zikatanov_2017}.
% LK: OK, das is ein bissl bloed, Nepomnyaschikh:1992:DFD is GLAUB ich ned wirklich first appearance
% Einen vorlaeufer von dem lemma gibts in einem 1995 Soviet Mathematics paper (aber ich kann kein Russisch...)
\begin{lemma}\label{lemma::fictspace}
  Let $H, \HT$ be two real Hilbert spaces equipped with norms induced by
  $A: H\rightarrow H^*$ and $\AT: \HT\rightarrow \HT^*$
  and let there exist a linear operator $\Pi: \HT\rightarrow H $ such that the continuity condition
  \begin{align}\label{eq::fict_cont}
    \norm{\Pi \tilde{v}}_{A}^2 \leq c_0\norm{\tilde{v}}_{\AT}^2 \quad\quad\forall\tilde{v}\in\tilde{H}
  \end{align}
  and the stability condition,
  \begin{align}\label{eq::fict_stab}
    \forall v\in H~ \exists \tilde{v}\in\HT \quad\text{such that}\quad v=\Pi\tilde{v}
    \quad\text{and}\quad \norm{\tilde{v}}_{\AT}^2 \leq c_1 \norm{v}_A^2,
  \end{align}
  hold.
  Then, for the preconditioner $\hat{A}_{\textrm{a}}$ defined by
  $\hat{A}_{\textrm{a}}^{-1} := \Pi\AT^{-1}\Pi^*$,
  there holds the spectral estimate 
  \begin{align}\label{eq::fict_equiv}
  c_0^{-1}  (v, v)_{A} \leq (\hat{A}_{\textrm{a}}^{-1} A v, v)_A \leq c_1 (v, v)_{A} \quad\forall v\in H.
  \end{align}
\end{lemma}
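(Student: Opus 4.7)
The plan is to reduce the statement to the additive Schwarz identity
\begin{equation*}
\|v\|_{\hat{A}_{\textrm{a}}}^2 = \inf_{\tilde{v}\in\tilde{H},\,\Pi\tilde{v} = v} \|\tilde{v}\|_{\tilde{A}}^2,
\end{equation*}
from which both bounds of the stated spectral estimate fall out of the hypotheses \eqref{eq::fict_cont} and \eqref{eq::fict_stab} directly. Note that \eqref{eq::fict_stab} in particular implies $\Pi$ is surjective, so the infimum is always taken over a non-empty set and $\hat{A}_{\textrm{a}} := (\Pi\tilde{A}^{-1}\Pi^*)^{-1}$ is well-defined and SPD.

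First I would establish this variational identity via Lagrange multipliers. For the constrained minimization of $\|\tilde{v}\|_{\tilde{A}}^2$ subject to $\Pi\tilde{v} = v$, the optimality condition $\tilde{A}\tilde{v}^* = \Pi^*\lambda$ combined with the constraint $\Pi \tilde{A}^{-1}\Pi^*\lambda = v$ forces $\lambda = \hat{A}_{\textrm{a}} v$ and hence $\tilde{v}^* = \tilde{A}^{-1}\Pi^*\hat{A}_{\textrm{a}}v$; a short direct computation then yields $\|\tilde{v}^*\|_{\tilde{A}}^2 = \langle\Pi^*\hat{A}_{\textrm{a}} v, \tilde{v}^*\rangle = \langle\hat{A}_{\textrm{a}} v, \Pi\tilde{v}^*\rangle = \langle\hat{A}_{\textrm{a}} v, v\rangle = \|v\|_{\hat{A}_{\textrm{a}}}^2$, as claimed.

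With the identity in hand, plugging the $\tilde{v}$ supplied by \eqref{eq::fict_stab} into the infimum immediately gives $\|v\|_{\hat{A}_{\textrm{a}}}^2 \leq c_1\|v\|_A^2$, while applying the continuity estimate \eqref{eq::fict_cont} to the minimizer $\tilde{v}^*$ (which satisfies $\Pi\tilde{v}^* = v$) gives $\|v\|_A^2 = \|\Pi\tilde{v}^*\|_A^2 \leq c_0\|\tilde{v}^*\|_{\tilde{A}}^2 = c_0\|v\|_{\hat{A}_{\textrm{a}}}^2$. To pass from this norm equivalence to the stated spectral estimate for $\hat{A}_{\textrm{a}}^{-1}A$, I would use that $\hat{A}_{\textrm{a}}^{-1}A$ is $A$-self-adjoint (as both $A$ and $\hat{A}_{\textrm{a}}$ are SPD); at any eigenvector the eigenvalue equals $\|v\|_A^2/\|v\|_{\hat{A}_{\textrm{a}}}^2$, which has just been bounded, and the estimate for general $v$ then follows from the spectral theorem for $A$-symmetric operators.

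I do not anticipate any significant obstacle; the only step carrying real content is the variational identity above, and it is entirely standard. A more direct alternative that bypasses the identity would be to expand $(\hat{A}_{\textrm{a}}^{-1}Av,v)_A = \|\Pi^*Av\|_{\tilde{A}^{-1}}^2$ using $\hat{A}_{\textrm{a}}^{-1} = \Pi\tilde{A}^{-1}\Pi^*$ and then bound the dual norm $\|\Pi^*Av\|_{\tilde{A}^{-1}} = \sup_{\tilde{w}}\langle Av,\Pi\tilde{w}\rangle/\|\tilde{w}\|_{\tilde{A}}$ from above via \eqref{eq::fict_cont} and from below by plugging in the test function supplied by \eqref{eq::fict_stab}; this yields the two-sided bound in one pass but loses the conceptual clarity of the Schwarz identity.
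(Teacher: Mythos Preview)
The paper does not supply its own proof of this lemma; it merely quotes the result from \cite[Theorem 6.3]{xu_zikatanov_2017}. Your argument via the additive Schwarz identity $\|v\|_{\hat{A}_{\textrm{a}}}^2 = \inf_{\Pi\tilde v = v}\|\tilde v\|_{\tilde A}^2$ is the standard route and is correct; the Lagrange-multiplier derivation of the identity and the subsequent use of \eqref{eq::fict_cont} and \eqref{eq::fict_stab} are both sound, and the alternative dual-norm argument you sketch at the end works equally well.

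One small point worth flagging: if you actually carry your argument through to the end, the eigenvalue ratio $\|v\|_A^2/\|v\|_{\hat A_{\textrm a}}^2$ lands in $[c_1^{-1},c_0]$, so the spectral estimate you obtain is $c_1^{-1}(v,v)_A \le (\hat A_{\textrm a}^{-1}Av,v)_A \le c_0(v,v)_A$, with the roles of $c_0$ and $c_1$ swapped relative to \eqref{eq::fict_equiv} as printed. This is a typographical slip in the stated lemma rather than a flaw in your reasoning; the condition number bound $c_0c_1$ is unchanged, and that is all that is used downstream in Theorems~\ref{theorem::aux_pc} and~\ref{theorem::aux_pc_schur}.
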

The term auxiliary space method, as coined in \cite{XuAux_1996},
refers to the case where the titular fictitious space $\HT$ is a product space that contains $H$
itself as a component, $\HT = H \times V_1 \times\ldots\times V_n$,
so $\Pi$ takes the form $\Pi=(I | \Pi_1 | \ldots | \Pi_n)$,
and $\tilde{A} := \textrm{diag}(M,\bar{A}_1,\ldots,\bar{A}_n)$ is a diagonal operator with $M:H\rightarrow H^*$ and $\bar{A}_j:V_j\rightarrow V_j^*$ and induced norm $\|(v, v_1, \ldots, v_n)\|^2_{\tilde{A}} = \|v\|_M^2 + \sum_{j=1}^n \|v_j\|_{\bar{A}_j}^2$.
The stability condition \eqref{eq::fict_stab} then demands the existence of a
stable decomposition $v = v_0 + \sum_{j=1}^n v_j$ with $v_0\in H$
and $v_j$ in the range of $\Pi_j$.
The underlying idea is that the remainder $v_0\in H$ in this composition
is small and somehow localized and $M$ can be a computationally cheap method (or ``smoother'').
Often, $M$ is given by some form of additive or multiplicative Schwarz method
such as (Block-)Jacobi or (Block-)Gauss-Seidel.
In the only relevant case here, where all involved spaces are finite dimensional and $n=1$,
the ASP $\hat{A}_{a}$ is just
\begin{align*}
\hat{A}_{a}^{-1} =\Pi\AT^{-1}\Pi^* = \Pi
  \left(
  \begin{matrix}
    M^{-1} & 0 \\
    0 & \bar{A}_1^{-1}
  \end{matrix}
  \right)\Pi^*
  =
  M^{-1} + \Pi_1 \tilde{A}_1^{-1} \Pi_1^*.
\end{align*}
As alluded to by the subscript, $\hat{A}_{a}$ is an additive preconditioner in that,
given some right hand side vector $b$ and intermediate approximation $x^0$
with residual $r^0 := b - Ax^0$, one Richardson iteration with preconditioner 
$\hat{A}_{\textrm{a}}$ is to perform
\begin{align*}
  x^0\rightarrow x^0 +  M^{-1} r^0 + \Pi_1 \tilde{A}_1^{-1} \Pi_1^*r^0,
\end{align*}
i.e. to perform two updates additively.
The multiplicative ASP $\hat{A}_{\textrm{m}}$ is implicitly defined by
performing these updates successively instead, 
\begin{align*}
  x^1 := x^0 + M^{-1} r^0, &\quad\quad r^1 := b - A x^1, \\
  x^2 := x^1 + \Pi_1 \tilde{A}_1^{-1} \Pi_1^*r^1, &\quad\quad r^2 := b - A x^2,
\end{align*}
and then, performing another smoothing step with the adjoint smoother $M^*$
\begin{align*}
  x^3 := x^2 + (M^*)^{-1} r^2
\end{align*}
yielding $x^3:= x^0 + \hat{A}_{\textrm{m}}^{-1} r^0$.
Multiplication with $\hat{A}_{\textrm{m}}^{-1}$ is just performing this procedure once
starting with $x^0=0$.
With symmetry ensured by the additional smoothing step,
positive definiteness of $\hat{A}_{\textrm{m}}$ follows from $A \leq M$
and $\Pi_1 \tilde{A}_1^{-1} \Pi_1^* \leq A^{-1}$
which can always be achieved by scaling the component preconditioners.
If $M$ is a (Block-)Jacobi preconditioner, scaling
of $M$ can be avoided by replacing it with the corresponding
(Block-)Gauss-Seidel iteration which never over-corrects, see
\cite{Xu_subspace}.
\begin{lemma} \label{lemma::add_to_mult}
  Let an ASP $\hat{A}_{\textrm{a}}$ with $n=1$ fulfill the conditions of Lemma \ref{lemma::fictspace},
  and $M$ be either self-adjoint and positive definite with $M \leq A$ or
  given by (Block-)Gauss-Seidel iterations.
  % Let $\Pi_1$ be injective and $\tilde{A}_1$ self-adoint and positive definite with 
  Let $\tilde{A}_1$ self-adjoint and positive definite with 
  \begin{align}\label{eq::add_to_mult_corrections}
  %\Pi_1 \tilde{A}_1^{-1} \Pi_1^*  \leq A^{-1}.
  \Pi_1^* A \Pi_1  \leq \tilde{A}_1.
  \end{align}
  Then $\hat{A}_{\textrm{m}}$ is self-adjoint and positive definite and there holds
  \begin{align}
    c_1^{-1}\hat{A}_{\textrm{m}} \lesssim A \leq \hat{A}_{\textrm{m}}. \label{eq::add_to_mult}
  \end{align}
\end{lemma}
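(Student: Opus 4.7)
The plan is to analyze $\hat{A}_{\textrm{m}}$ through its error propagation operator. Setting $T_0 := M^{-1} A$ and $T_1 := \Pi_1 \tilde{A}_1^{-1} \Pi_1^* A$ so that $\hat{A}_{\textrm{a}}^{-1} A = T_0 + T_1$, the three-sweep algorithm preceding the lemma translates into the identity
\begin{equation*}
I - \hat{A}_{\textrm{m}}^{-1} A \;=\; E_m \;:=\; (I - (M^*)^{-1} A)\,(I - T_1)\,(I - T_0).
\end{equation*}
A short calculation using $\tilde{A}_1 = \tilde{A}_1^*$ and $A = A^*$ shows that $E_m$ is self-adjoint in the $A$-inner product, hence so is $\hat{A}_{\textrm{m}}$.

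For the upper bound $A \leq \hat{A}_{\textrm{m}}$, which is equivalent to the $A$-self-adjoint operator $\hat{A}_{\textrm{m}}^{-1} A$ having spectrum in $[0,1]$ and therefore to $\|E_m\|_A \leq 1$, I would argue factor by factor. The middle factor $I - T_1$ is $A$-nonexpansive because the hypothesis $\Pi_1^* A \Pi_1 \leq \tilde{A}_1$ places the $A$-spectrum of $T_1$ in $[0,1]$. The two smoothing factors are nonexpansive by the assumption on $M$: either by elementary spectral analysis in the self-adjoint case, or by the classical fact that a forward (resp.\ backward) Gauss--Seidel sweep never amplifies the $A$-norm of the error, see~\cite{Xu_subspace}. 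Composing yields $\|E_m\|_A \leq 1$.

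For the lower bound $c_1^{-1} \hat{A}_{\textrm{m}} \lesssim A$, the plan is to invoke a Xu--Zikatanov-type identity for symmetric two-component multiplicative subspace correction (cf.~\cite{Xu_subspace,xu_zikatanov_2017}) of the form
\begin{equation*}
\|E_m\|_A^2 \;=\; 1 - \frac{1}{K_m}, \qquad K_m \;=\; \sup_{v\in H}\inf_{v = v_0 + \Pi_1 \tilde{v}_1}\frac{\|v_0\|_{\widetilde{M}}^2 + \|\tilde{v}_1\|_{\tilde{A}_1}^2}{\|v\|_A^2},
\end{equation*}
where $\widetilde{M}$ denotes the symmetric surrogate of $M$ (equal to $M$ in the self-adjoint case, and the symmetrized Gauss--Seidel matrix $M^*(M^* + M - A)^{-1} M$ otherwise), which is spectrally equivalent to $A$ up to mild constants on the appropriate range. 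The stability condition~\eqref{eq::fict_stab} furnishes, for every $v \in H$, a decomposition $v = 0 + \Pi_1 \tilde{v}_1$ with $\|\tilde{v}_1\|_{\tilde{A}_1}^2 \leq c_1 \|v\|_A^2$, so that $K_m \lesssim c_1$. This yields $\|E_m\|_A^2 \leq 1 - c\, c_1^{-1}$ for some absolute $c > 0$, which is strictly less than one and therefore simultaneously implies the positive definiteness of $\hat{A}_{\textrm{m}}$. Since the $A$-spectrum of $\hat{A}_{\textrm{m}}^{-1} A$ lies in $[1 - \|E_m\|_A,\,1]$, we conclude $\lambda_{\min}(\hat{A}_{\textrm{m}}^{-1} A) \gtrsim c_1^{-1}$, equivalently $c_1^{-1} \hat{A}_{\textrm{m}} \lesssim A$.

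The main obstacle I anticipate is the Gauss--Seidel case: the replacement of $M$ by $\widetilde{M}$ and the verification that the smoother contribution to $K_m$ remains $\mathcal{O}(\|v_0\|_A^2)$ is standard in subspace-correction theory but slightly technical to write out cleanly, requiring care with the constants in the non-overcorrection estimate for one sweep. All other ingredients are direct applications of~\eqref{eq::fict_stab} and the non-overcorrection hypothesis~\eqref{eq::add_to_mult_corrections}.
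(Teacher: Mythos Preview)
Your overall strategy matches the paper's: both appeal to Xu's subspace-correction framework, with the stable decomposition supplied by~\eqref{eq::fict_stab} and the non-overcorrection conditions on $M$ and $\tilde{A}_1$ ensuring the contractions; the paper simply cites \cite{Xu_subspace} and \cite[Section~6]{xu_zikatanov_2017} rather than writing out the error-propagation computation you sketch.

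There is, however, a genuine slip in your lower-bound argument. You claim that~\eqref{eq::fict_stab} furnishes a decomposition $v = 0 + \Pi_1 \tilde{v}_1$ with $\|\tilde{v}_1\|_{\tilde{A}_1}^2 \le c_1\|v\|_A^2$. It does not: in the auxiliary-space setting $\tilde{H} = H \times V_1$, the stability condition gives $\tilde{v} = (v_0,\tilde{v}_1)$ with $v = v_0 + \Pi_1\tilde{v}_1$ and $\|v_0\|_M^2 + \|\tilde{v}_1\|_{\tilde{A}_1}^2 \le c_1\|v\|_A^2$, and $v_0$ is in general nonzero (in the concrete applications later in the paper it is $(I - E\IU)(u_h,\hat{u}_h)$). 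Since $\Pi_1$ need not be surjective, the decomposition you wrote may not even exist. The fix is straightforward: feed this two-term decomposition into your $K_m$ expression, which then requires $\|v_0\|_{\widetilde{M}}^2 \lesssim \|v_0\|_M^2$. In the self-adjoint case $\widetilde{M}=M$ and this is immediate; in the (Block-)Gauss--Seidel case you need the standard comparison between the symmetrized sweep and the (block-)diagonal, which is exactly the ``slightly technical'' point you flag at the end. A minor additional remark: to get the spectrum of $\hat{A}_{\textrm{m}}^{-1}A$ in $[0,1]$ from $\|E_m\|_A\le 1$ you also need $E_m\ge 0$ in the $A$-inner product, which follows from the factorization $E_m = (I-T_0)^*(I-T_1)(I-T_0)$ together with $0\le I-T_1\le I$.
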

\begin{proof} %NOTE: because only 1 coarse grid step bound (1-sqrt(1-x))/x from below and it is ok!
  Can be shown within the framework of space decomposition and subspace correction,
  see \cite{Xu_subspace}.
  The analysis there rests on a strengthened Cauchy-Schwarz type inequality and a stable decomposition.
  The former is implied by limited overlap of subspaces and the additional requirements posed on $M$ and $\tilde{A}_1$
  and the latter is directly related to \eqref{eq::fict_stab}.
  See also the discussion in \cite[Section 6]{xu_zikatanov_2017}, where convergence bounds for multiplicative
  two-grid Algebraic Multigrid methods are derived from the fictitious space lemma.
\end{proof}

\section{Preconditioners for $A$} \label{sec::aux}
From the point of view of Section \ref{ssec::aux_frame}, a straightforward
approach to preconditioning $\Ss$ is to use the conforming low order space
$\Vbar_h$,
where preconditioning is well understood and efficient and scalable
software is widely available, as basis for an ASP.
%Functions $(u_h, \hat{u}_h)\in \VUUH$ are split into a low order, $H^1$ conforming
%component plus a remainder.
A slight complication in the analysis arises due to the non-conformity
in boundary conditions between $\VUUH = V_h\times\Vhat_h$,
where tangential Dirichlet conditions on $\GammaNT$ are imposed in $\hat{V}_h$,
and $\Vbar_h$, where $\GammaNT$ does not feature any Dirichlet conditions.
While imposing strong tangential Dirichlet conditions in $\Vbar_h$
would sidestep the issue and be convenient for theory,
in practice this is only a simple matter when the outflow lies
in an axis-aligned plane and we can impose Dirichlet
conditions in the $x,y$ or $z$ component.
Therefore, we for now assume that $\GammaNT=\emptyset$ and address
the case $\GammaNT\neq\emptyset$ separately in Lemma \ref{lemma::outflow}
at the end of this Section.

On $\Vbar_h$, we define the bilinear form $\bar{a}(\cdot,\cdot)$
(as usual, with associated operator $\bar{A}$ and
Finite Element matrix $\bar{\mA}$) by
\begin{align} \label{eq::defAhat}
\bar{a}(\ubh, \vbh) := \nu^{-1}\int_{\Omega}\eps(\ubh):\eps(\vbh)\dx\quad\forall\ubh,\vbh\in\Vbar_h.
\end{align}
To define the operator $\Pi$ in \eqref{eq::fict_cont}
we need the embedding operator
\begin{align} \label{eq::defE}
  E : \Vbar_h\rightarrow \VUUH : \bar{u}_h\mapsto (\bar{u}_h, (\bar{u}_h)_t)
\end{align}
with associated Finite Element matrix $\mE$.
\begin{corollary}\label{coro::embed}
  For $\ub_h\in\Vbar_h$ there holds
  \begin{align}
    \|E\ub_h\|_{\Ss} = \|\bar{u}_h\|_{\bar{A}}. \label{eq::A_eq_ETSE}
  \end{align}
\end{corollary}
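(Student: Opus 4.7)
The plan is to show that the embedded function $E\ub_h$ lies in the continuity subspace $\VUUHC$ defined in Lemma \ref{lemma::SPD}, so that the identity \eqref{eq::schur_SPD_cont} applies directly and equates $\|E\ub_h\|_A$ with the natural energy of $\ub_h$.

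First I would verify that $E\ub_h=(\ub_h,(\ub_h)_t)$ is indeed an element of $\VUUH$. Since $\ub_h\in\Vbar_h\subset \Poly^1(\mesh,\rr^d)\cap H^1(\Omega,\rr^d)$, it is in particular $H(\div)$-conforming, and because $\ub_h=0$ on $\GammaD$ its normal trace vanishes there, so $\ub_h\in V_h$ (using $\pdg\ge 2$). For the facet component, $(\ub_h)_t$ is a continuous piecewise polynomial of degree at most one, hence fits into $\Poly^{\pdg-1}(\facets,\rr^d)$; its normal component is zero by construction, and it vanishes on $\GammaD$ (with the temporary assumption $\GammaNT=\emptyset$ this covers all the essential conditions of $\hat V_h$).

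Next I would check the defining conditions of $\VUUHC$. The first, $\ub_h\in H^1(\Omega,\rr^d)$, is immediate from $\ub_h\in\Vbar_h$. The second requires $\Pi^{\pdg-1}_F(\ub_h-\hat{u}_h)_t=0$ on every $F\in\facets$ with $\hat u_h=(\ub_h)_t$, and this is trivial because
\begin{align*}
  (\ub_h-(\ub_h)_t)_t = (\ub_h)_t-(\ub_h)_t = 0 \quad\text{on every facet.}
\end{align*}
Hence $E\ub_h\in\VUUHC$.

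Applying the identity \eqref{eq::schur_SPD_cont} then yields $\|E\ub_h\|_{\Ss}^2 = \nu\|\eps(\ub_h)\|_0^2$, which by the definition \eqref{eq::defAhat} of $\bar a$ coincides with $\|\ub_h\|_{\bar A}^2$, giving the claimed equality. The argument is essentially a bookkeeping exercise; the only mild subtlety is ensuring the boundary data of $E\ub_h$ is compatible with $\VUUH$, and this is exactly the reason the assumption $\GammaNT=\emptyset$ is temporarily imposed and the outflow case deferred to Lemma \ref{lemma::outflow}.
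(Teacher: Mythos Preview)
Your proof is correct and follows precisely the paper's approach: verify $E\ub_h\in\VUUHC$ (using $\GammaNT=\emptyset$) and then invoke \eqref{eq::schur_SPD_cont}. You simply spell out in more detail the membership checks that the paper compresses into a single sentence.
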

\begin{proof}
  For $\bar{u}_h\in \Vbar_h$ and $\GammaNT=\emptyset$ there holds $E\ub_h\in \VUUHC$ from Lemma \ref{lemma::SPD}
  and \eqref{eq::A_eq_ETSE} follows from \eqref{eq::schur_SPD_cont}.
\end{proof}
To establish the stable decomposition \eqref{eq::fict_stab}
we use $\IV$ from Lemma~\ref{lemma::nc_to_c_nokorn_withbc} and define
\begin{align} \label{eq::defIVB}
  \IU : \VUUH\rightarrow \Vbar_h : (u_h, \hat{u}_h) \mapsto \IV u_h.
\end{align}
\begin{corollary}\label{coro::nc_to_c_approx}
  For $(u_h, \hat{u}_h)\in\VUUH$ and $(w_h,\hat{w}_h):=(I-E\IU)(u_h, \hat{u}_h)$ there holds
  \begin{align}\label{eq::nc_to_c_approx}
    \sum_{T\in\mesh}\Big(\|\eps(w_h)\|_T^2 + h^{-2}\|w_h\|_{T}^2
    + \sum_{F\in\facetsT}\| \Pi^{\pdg-1}(w_h - \hat{w}_h)_t\|_{j,F}^2\Big)
    \lesssim \|(u_h, \hat{u}_h)\|_{\eps, h}^2,
  \end{align}
\end{corollary}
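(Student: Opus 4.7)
My plan is to apply Lemma~\ref{lemma::nc_to_c_nokorn_withbc} to $u_h$ and then reduce every remaining term to the HDG norm $\|(u_h,\hat{u}_h)\|_{\eps,h}^2$. First, I would unpack the definitions: $(w_h,\hat{w}_h) = (u_h - \IV u_h,\; \hat{u}_h - (\IV u_h)_t)$ by \eqref{eq::defIVB} and \eqref{eq::defE}. A short computation gives
\begin{align*}
(w_h - \hat{w}_h)_t \;=\; (u_h)_t - (\IV u_h)_t - \hat{u}_h + (\IV u_h)_t \;=\; (u_h - \hat{u}_h)_t,
\end{align*}
so the jump-like contribution in the LHS of \eqref{eq::nc_to_c_approx} is identical to that in $\|(u_h,\hat{u}_h)\|_{\eps,h}^2$ and is bounded trivially. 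For the volume terms, Lemma~\ref{lemma::nc_to_c_nokorn_withbc} applied to $u_h \in H^2(\mesh,\rr^d)$ yields
\begin{align*}
\sum_{T\in\mesh}\Big(h^{-2}\|w_h\|_T^2 + \|\eps(w_h)\|_T^2\Big) \;\leq\; \sum_{T\in\mesh}\Big(h^{-2}\|w_h\|_T^2 + \|\nabla w_h\|_T^2\Big) \;\lesssim\; \sum_{T\in\mesh}\|\eps(u_h)\|_T^2 + \sum_{F\in\facetsI}h^{-1}\|\Pi^R_F\jump{u_h}\|_F^2.
\end{align*}
The first sum on the right is directly absorbed into $\|(u_h,\hat{u}_h)\|_{\eps,h}^2$, so the only real work is to control the rigid-body-projected jump sum.

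The main obstacle is bounding $\sum_{F\in\facetsI} h^{-1}\|\Pi^R_F\jump{u_h}\|_F^2$ by the facet part of $\|(u_h,\hat{u}_h)\|_{\eps,h}^2$. My plan uses three ingredients. First, since $R_F \subseteq \Poly^1(F,\rr^d) \subseteq \Poly^{\pdg-1}(F,\rr^d)$ for $\pdg\geq 2$, the $L^2$-projections satisfy $\Pi^R_F = \Pi^R_F \Pi^{\pdg-1}_F$, whence $\|\Pi^R_F\jump{u_h}\|_F \leq \|\Pi^{\pdg-1}_F\jump{u_h}\|_F$. Second, since $V_h \subseteq H(\div,\Omega)$ the normal component of the jump vanishes, so on a facet $F$ shared by $T^+,T^-$ with single-valued $\hat{u}_h$,
\begin{align*}
\jump{u_h} \;=\; \jump{u_h}_t \;=\; (u_h - \hat{u}_h)_t\big|_{T^+} - (u_h - \hat{u}_h)_t\big|_{T^-},
\end{align*}
and applying the triangle inequality after projecting with $\Pi^{\pdg-1}_F$ splits the jump into two one-sided terms. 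Third, from the equivalence \eqref{eq::jFjumpN} together with the fact that every coefficient $k(k-j+1) \geq 1$ and $\sum_{j=0}^{\pdg-1}\|(\Pi^j_F - \Pi^{j-1}_F)v\|_F^2 = \|v\|_F^2$ for $v \in \Poly^{\pdg-1}(F,\rr^d)$, we obtain $h^{-1}\|v\|_F^2 \lesssim \|v\|_{j,F}^2$. Applied to $v = \Pi^{\pdg-1}_F(u_h - \hat{u}_h)_t|_{T^\pm}$ and summed over all interior facets, together with the fact that each $F\in\facetsI$ contributes to exactly two element facet sets $\facetsT$, this yields
\begin{align*}
\sum_{F\in\facetsI} h^{-1}\|\Pi^R_F\jump{u_h}\|_F^2 \;\lesssim\; \sum_{T\in\mesh}\sum_{F\in\facetsT}\|\Pi^{\pdg-1}(u_h-\hat{u}_h)_t\|_{j,F}^2.
\end{align*}
Combining this with the $\sum_T\|\eps(u_h)\|_T^2$ contribution and the observation that the facet term transfers verbatim to $(w_h,\hat{w}_h)$ closes the estimate.
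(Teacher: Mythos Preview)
Your proof is correct and follows the same approach as the paper. The paper's proof is terse---it simply cites Lemma~\ref{lemma::nc_to_c_nokorn_withbc} for the volume terms and ``the identity for the jump terms in \eqref{eq::jFjumpN}'' for passing from $h^{-1}\|\Pi^R_F\jump{u_h}\|_F^2$ to the $\|\cdot\|_{j,F}$ facet terms; you have spelled out exactly these steps, including the use of normal continuity of $V_h$ and the insertion of the single-valued $\hat{u}_h$ to split the jump.
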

\begin{proof}
  Per definition of $E$
  there holds $\Pi^{\pdg-1}(w_h - \hat{w}_h)_t = \Pi^{\pdg-1}(u_h - \hat{u}_h)_t$
  and the facet terms are bounded trivially.
  The volume terms are bounded with Lemma \ref{lemma::nc_to_c_nokorn_withbc}
  and the identity for the jump terms in \eqref{eq::jFjumpN}.
\end{proof}
With $\bar{V}_h$ being of low order, robustness in the polynomial degree $\pdg$
has to be achieved by the smoother.
\begin{theorem}\label{theorem::aux_pc}
  Let $\mM$ be the overlapping Block-Jacobi preconditioner for $\SsM$
  that has one block per facet $F\in\facets$ that contains all
  $\VUUH$ degrees of freedom associated to either $F$ or any
  $T\in\mesh$ such that $F\in\facetsT$.
  Let $\mC$ be an SPD preconditioner for $\Abm$ such that
  $\mC \sim \Abm$.
  Then, conditions \eqref{eq::fict_cont} and \eqref{eq::fict_stab} of Lemma \ref{lemma::fictspace}
  are fulfilled for $H = \VUUH$, $\HT=\VUUH\times\Vbar_h$,
  \begin{align*}
    \Pi:\HT\rightarrow H : ((u_h, \hat{u}_h), \bar{u}_h) \mapsto (u_h, \hat{u}_h) + E\bar{u}_h,
  \end{align*}
  and $\tilde{A}:=\operatorname{diag}(M, C)$ with $c_0\lesssim 1$ and $c_1\lesssim\gamma\cdot(\log{\pdg})^3$.
  That is, for $\SsMHA^{-1} := \mM^{-1} + \mE \mC^{-1} \mE^T$
  % \begin{align*}%\label{eq::aux_pc_add_def}
  %   \SsMHA^{-1} := \mM^{-1} + \mE \mC^{-1} \mE^T
  % \end{align*}
  there holds
  \begin{align*}%\label{eq::aux_pc_add_kappa}
    \SsMHA \lesssim  \SsM \lesssim \gamma\cdot(\log{\pdg})^{3} \SsMHA.
  \end{align*}
\end{theorem}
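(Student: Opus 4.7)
The plan is to verify the continuity \eqref{eq::fict_cont} and stability \eqref{eq::fict_stab} conditions of Lemma~\ref{lemma::fictspace}. The former, with $c_0 \lesssim 1$, will give the upper bound $\SsMHA \lesssim \SsM$; the latter, with $c_1 \lesssim \gamma(\log\pdg)^3$, will give the lower bound $\SsM \lesssim c_1 \SsMHA$. For continuity I would first split by the triangle inequality,
\[
\|(u_h,\hat u_h) + E\bar u_h\|_\Ss^2 \lesssim \|(u_h,\hat u_h)\|_\Ss^2 + \|E\bar u_h\|_\Ss^2,
\]
use Corollary~\ref{coro::embed} to convert the second term into $\|\bar u_h\|_{\bar A}^2 \sim \|\bar u_h\|_{\mC}^2$, and use the standard fact that the overlapping block-Jacobi preconditioner $\mM$, with blocks equal to principal submatrices of $\SsM$, satisfies $\SsM \lesssim \mM$ with a constant controlled by the colouring number of the facet patches, hence by shape-regularity alone and independent of $\pdg$.

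For stability, given $(u_h,\hat u_h) \in \VUUH$, the natural decomposition is $\bar u_h := \IU(u_h,\hat u_h) = \IV u_h$ and $(u'_h,\hat u'_h) := (I - E\IU)(u_h,\hat u_h)$. The conforming component $\|\bar u_h\|_{\mC}^2 \sim \|\eps(\IV u_h)\|_0^2$ will be controlled by the triangle inequality $\|\eps(\IV u_h)\|_T \leq \|\eps(u_h)\|_T + \|\nabla(u_h-\IV u_h)\|_T$ together with Lemma~\ref{lemma::nc_to_c_nokorn_withbc}. The jump contribution $\sum_F h^{-1}\|\Pi^R_F\jump{u_h}\|_F^2$ appearing there reduces to a purely tangential jump $\jump{(u_h-\hat u_h)_t}$ since $u_h \in H(\div,\Omega)$ has no normal jump and $\hat u_h$ is single-valued and tangential; then $R_F \subseteq \Poly^{\pdg-1}$ combined with the inequality $h^{-1}\|v\|_F^2 \lesssim \pdg^{-1}\|v\|_{j,F}^2$ for $v \in \Poly^{\pdg-1}(F)$, which is immediate from \eqref{eq::jFjumpN}, absorbs the contribution into $\|(u_h,\hat u_h)\|_{\eps,h}^2 \sim \|(u_h,\hat u_h)\|_\Ss^2$ via Lemma~\ref{lemma::SPD}.

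The hard part is the estimate $\|(u'_h,\hat u'_h)\|_\mM^2 \lesssim \gamma(\log\pdg)^3 \|(u_h,\hat u_h)\|_\Ss^2$. I would use the additive-Schwarz characterization $\|v\|_\mM^2 = \inf_{v=\sum_F R_F^T v_F}\sum_F\|v_F\|_{A_F}^2$ together with a concrete decomposition that assigns each $V_h$ interior bubble dof of $(u'_h,\hat u'_h)$ to a single adjacent facet patch and each $\hat V_h$ dof to its own facet. After eliminating the $V_h$ bubble dofs locally on each patch, the local $A_F$-norm equals the facet-restricted $\SsD$-norm; Corollary~\ref{coro::schur_SPD} then bounds this, up to the factor $\gamma$, by the local trace norm $\|(u'_h,\hat u'_h)\|_{\eps,F,0}^2$ of \eqref{eq::traceFZ}, to which Corollary~\ref{coro::traceZ} applies to produce the factor $(\log\pdg)^3$ and the simpler trace norm $\|(u'_h,\hat u'_h)\|_{\eps,F}^2$. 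Summing over facets, this last quantity is dominated termwise by the broken-energy quantity controlled in Corollary~\ref{coro::nc_to_c_approx}, which in turn is bounded by $\|(u_h,\hat u_h)\|_{\eps,h}^2 \sim \|(u_h,\hat u_h)\|_\Ss^2$, closing the argument.

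The main obstacle is verifying the rigid-body-mode hypothesis $\Pi^R_F((u'_h)_n n + (\hat u'_h)_t) = 0$ required to invoke Corollary~\ref{coro::traceZ} facet by facet. This is precisely the motivation for the vertex-averaging operator $\IV$ of Lemma~\ref{lemma::nc_to_c_nokorn_withbc}: $\IV u_h$ matches $u_h$ at the vertices of each facet, so the low-order moments of the facet trace of the remainder vanish in the sense needed. Making this rigorous uniformly over all patches, while also carefully accounting for the finite overlap of facet patches and the interaction with the $\VUUHI$ degrees of freedom that the conforming component does not see, is where the bookkeeping will require the most care.
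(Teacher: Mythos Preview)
Your overall structure---the same decomposition $\bar u_h = \IU(u_h,\hat u_h)$, $(u'_h,\hat u'_h) = (I-E\IU)(u_h,\hat u_h)$, and the identification of the smoother bound as the hard part---matches the paper. But there is a genuine gap precisely where you flag the ``main obstacle''.

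Your justification for the rigid-body-mode hypothesis of Corollary~\ref{coro::traceZ} is incorrect. The operator $\IVf$ is a vertex \emph{average}: $(\IVf u_h)(p) = |\chi_p|^{-1}\sum_{T\in\chi_p} u_{h|T}(p)$. Since $u_h\in V_h\subset H(\div)$ is only normal-continuous and in general discontinuous at vertices, $\IV u_h$ does \emph{not} match $u_h$ at the vertices of a facet. More importantly, the hypothesis reads $\Pi^R_F\big((u'_h)_n n + (\hat u'_h)_t\big)=0$, and $(\hat u'_h)_t = \hat u_h - (\IV u_h)_t$ depends on $\hat u_h$, which $\IV$ never sees. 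There is no mechanism by which averaging $u_h$ annihilates the rigid-body projection of $\hat u_h$. The paper is explicit about this: the hypothesis ``is not usually true for $(I-E^{\partial}\IU)(u_h,\hat u_h)$''.

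The paper's fix, carried out in the proof of Theorem~\ref{theorem::aux_pc_schur} and then transferred back, is to introduce a second operator $\projlo$ (see~\eqref{def::projlo}) built from the $\BDM^1$ interpolator and the facet projection $\Pi^1_F$, and to split the smoother term into $\mathcal{H}\projlo(I-E^{\partial}\IU)(u_h,\hat u_h)$ and $(I-\mathcal{H}\projlo)(I-E^{\partial}\IU)(u_h,\hat u_h)$. On the high-order piece, which simplifies to $(I-\mathcal{H}\projlo)(u_h,\hat u_h)$, the rigid-body condition \emph{does} hold by construction of $\projlo$ and Corollary~\ref{coro::traceZ} gives the $(\log\pdg)^3$ factor. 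On the low-order piece, which lives in (the harmonic extension of) $\Poly^1(\mesh)\times\Poly^1(\facets)$, a $\pdg$-independent inverse inequality replaces the trace estimate, and Corollaries~\ref{coro::nc_to_c_approx_schur} and~\ref{coro::projlo_approx} close the bound. Your proposal lacks this splitting; without it, the invocation of Corollary~\ref{coro::traceZ} is unjustified and the argument does not go through.
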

We postpone the proof of Theorem \ref{theorem::aux_pc} to Section
\ref{ssec::schur_schur}, where we discuss preconditioning of $\SsD$,
as obtaining the logarithmic bound in $\pdg$ is more natural
in that context.
\begin{remark}
  If one is satisfied with a polynomial bound in $\pdg$,
  Theorem \ref{theorem::aux_pc} can be shown only using
  standard Finite Element inverse estimates
  and Corollaries \ref{coro::embed} and \ref{coro::nc_to_c_approx}.
\end{remark}

\subsection{Preconditioning via the condense system}\label{ssec::schur_schur}

Using the factorization in \eqref{eq::factorSsigma} to implement multiplication with $\SsM$
%as described in Section \ref{sec:static_cond}
also opens up a way to precondition $\SsM$,
where replacing $\SsDM$ by some preconditioner $\SsDMH$
%in \eqref{eq::factorSsigma}
yields a preconditioner
\begin{align*}
  \SsMH^{\text{ext}} :=
  \begin{pmatrix}
    \mI&\SsM_{\partial\circ}\SsM_{\circ\circ}^{-1}\\
    \mZ&\mI\\
  \end{pmatrix}
  \begin{pmatrix}
    \SsDMH & \mZ \\
    \mZ&\SsM_{\circ\circ}\\
  \end{pmatrix}
  \begin{pmatrix}
    \mI&\mZ\\
    \SsM_{\circ\circ}^{-1}\SsM_{\circ\partial}&\mI\\
  \end{pmatrix}
\end{align*}
for $\SsM$.
From the factorization \eqref{eq::factorSsigma} it clearly follows that
\begin{align*}
  c_1 \SsMH^{\text{ext}} \leq \SsM \leq c_2 \SsMH^{\text{ext}}
  \quad\Leftrightarrow\quad
  c_1 \SsDMH \leq \SsDM \leq c_2 \SsDMH
\end{align*}
and we are left with the task to precondition the ``double'' Schur
complement $\SsDM$. 
Analogues for $\SsDM$ of the preconditioners $\SsMHA$ and $\SsMHM$ can
be constructed straightforwardly with the modified embedding operator
\begin{align} \label{eq::defED}
  E^{\partial} : \Vbar_h\rightarrow \VUUHH : \bar{u}_h\mapsto \mathcal{H}E\bar{u}_h.
\end{align}
Note that the matrix $\mDE\GalEqD E^{\partial}$ is just a sub-matrix of $\mE$
as $\mathcal{H}$ does not change $(\vu^{\partial},\vuh)$ coefficients, see \eqref{eq::Hcoeffs},
that is simply
\begin{align*}
  \mE =
  \begin{pmatrix}
    \mE_{\circ} \\
    \mE_{\partial} \\
    \mE_{\vuh} \\
  \end{pmatrix}
  \quad\quad
  \mE^{\partial} =
  \begin{pmatrix}
    \mE_{\partial} \\
    \mE_{\vuh} \\
  \end{pmatrix}.
\end{align*}%TODO: ok, da solltma die DS pcs gleich definieren und dann den absatz umschreiben
We \emph{could} modify the bilinear form in $\Vbar_h$ and use
$\Abm:=\mE^{\partial,T}\SsDM\mE^{\partial}$, which would be computable element-wise.
In that case, the exact analogue of Corollary \ref{coro::embed} would hold.
However, as we now show, this is not strictly necessary and
for ease of implementation we opt to keep $\Abm$ defined by \eqref{eq::defAhat}.
\begin{lemma}\label{lemma::I_bubble}
  For $(u_h,\hat{u}_h)\in\VUUH$ there holds
  \begin{align}
    \IV u^{\circ}_h &= 0 , \label{eq::I_bubble} \\
    \IU(u_h,\hat{u}_h) &= \IU\mathcal{H}(u_h,\hat{u}_h) = \IU\mathcal{H}_{\eps}(u_h,\hat{u}_h). \label{eq::IHeqI}
  \end{align}
\end{lemma}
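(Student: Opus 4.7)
The plan is to first establish \eqref{eq::I_bubble} by a pointwise argument on vertices and then derive \eqref{eq::IHeqI} as an immediate consequence, using the fact that both lifting operators leave the ``boundary'' coefficients $(\vu^\partial, \vuh)$ untouched.

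For \eqref{eq::I_bubble}, the key observation is the vertex-value definition \eqref{eq::defintrp} of $\IVf$, so it suffices to show $u_h^\circ(p) = 0$ for every vertex $p \in \verts$. Fix such a $p$. By definition $u_h^\circ \in V_h^\circ$ is supported in a single element $T$ and satisfies $(u_h^\circ)_n = 0$ on every $F \in \facetsT$. If $p \notin T$ then trivially $u_h^\circ|_{T'}(p) = 0$ for every $T' \in \chi_p$. If $p \in T$, then $p$ is a vertex of $T$ and exactly $d$ faces $F_1, \ldots, F_d \in \facetsT$ meet at $p$; their outward unit normals $n_1, \ldots, n_d$ are linearly independent since $T$ is a non-degenerate simplex. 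The conditions $u_h^\circ(p) \cdot n_i = 0$ for $i = 1, \ldots, d$ then force $u_h^\circ(p) = 0$. Thus $\IVf u_h^\circ(p) = 0$ at every vertex, so $\IVf u_h^\circ = 0$ and hence $\IV u_h^\circ = \pi_0 \IVf u_h^\circ = 0$.

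For \eqref{eq::IHeqI}, recall from \eqref{eq::Hcoeffs} that $\mathcal{H}$ modifies only the element-bubble coefficients $\vw_\circ$ of its argument and leaves $(\vu^\partial, \vuh)$ unchanged. Consequently, writing $(w_h, \hat{w}_h) := \mathcal{H}(u_h, \hat{u}_h)$, we have $\hat{w}_h = \hat{u}_h$ and $w_h - u_h \in V_h^\circ$. By definition $\IU(u_h, \hat{u}_h) = \IV u_h$ and $\IU(w_h, \hat{w}_h) = \IV w_h$, so by linearity and \eqref{eq::I_bubble},
\begin{align*}
  \IU \mathcal{H}(u_h, \hat{u}_h) - \IU(u_h, \hat{u}_h) = \IV(w_h - u_h) = 0.
\end{align*}
The identity $\IU \mathcal{H}_\eps(u_h, \hat{u}_h) = \IU(u_h, \hat{u}_h)$ follows by the same argument, since by \eqref{eq::epshschur} the lifting $\mathcal{H}_\eps$ also minimizes only over $\VUUHI = V_h^\circ \times \{0\}$ and hence likewise alters only the $V_h^\circ$ component while leaving the facet unknown $\hat{u}_h$ and the coefficients associated to $\phi^{u,\partial}$ unchanged.

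There is no real obstacle here: the entire proof reduces to the geometric fact that $d$ face normals at a simplex vertex are linearly independent, plus the structural observation that both $\mathcal{H}$ and $\mathcal{H}_\eps$ act only on bubble coefficients.
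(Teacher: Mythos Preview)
Your proof is correct and follows essentially the same approach as the paper: both argue that at each vertex of a simplex the $d$ face normals are linearly independent, forcing a normal bubble to vanish there, and both deduce \eqref{eq::IHeqI} from the fact that $\mathcal{H}$ and $\mathcal{H}_\eps$ modify only the $V_h^\circ$ component. One small imprecision: a general $u_h^\circ\in V_h^\circ$ need not be supported in a single element, but your argument still goes through since the relevant property, namely $(u_h^\circ)_{|T}\cdot n = 0$ on every $F\in\facetsT$ for each $T\in\mesh$, holds for any element of $V_h^\circ$.
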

\begin{proof}
  Any $u^{\circ}_h\in V_h^{\circ}$ restricted to $T\in\mesh$ is a
  normal bubble. At any vertex $p$ of $T$, $\spacedim$ linearly
  independent components of $(u_h^{\circ})_{|T}(p)$ vanish, and
  therefore $(u_h^{\circ})_{|T}(p)$ and $\IV u_h^{\circ}$ also vanish as a
  whole.
  This concludes the proof as $\mathcal{H},\mathcal{H}_{\eps}$ only add
  some $v_h\in V_h^{\circ}$ to the $V_h$ component of $(u_h,\hat{u}_h)\in\VUUH$.
\end{proof}

\begin{corollary}\label{coro::embed_schur}
  For $\bar{u}_h\in \Vbar_h$ there holds
  \begin{align}\label{eq::embed_schur}
    \gamma^{-1}\|\bar{u}_h\|_{\bar{A}}
    \lesssim \gamma^{-1} \nu\|\mathcal{H}_{\eps}E\bar{u}_h\|_{\eps,h,\partial}
    \lesssim \|E^{\partial}\bar{u}_h\|_{\Ss^{\partial}}
    \leq \|\bar{u}_h\|_{\bar{A}}.
  \end{align}
\end{corollary}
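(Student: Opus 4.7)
Proof plan: The strategy is to verify the three inequalities in the chain one at a time, reading them as bounds on squared quantities so that the factors of $\nu$ match dimensionally.

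For the right-most inequality, Corollary~\ref{coro::embed} gives $\|E\bar u_h\|_{\Ss} = \|\bar u_h\|_{\bar A}$. Since $E^{\partial}\bar u_h = \mathcal{H}E\bar u_h$ and $\mathcal{H}$ minimizes $\|\cdot\|_{\Ss}$ over the equivalence class $E\bar u_h + \VUUHI$, while by \eqref{eq::schuronharmfkt} the norms $\|\cdot\|_{\Ss}$ and $\|\cdot\|_{\SsD}$ agree on $\VUUHH$, one obtains
\[
\|E^{\partial}\bar u_h\|_{\SsD}^2 = \|\mathcal{H}E\bar u_h\|_{\Ss}^2 \leq \|E\bar u_h\|_{\Ss}^2 = \|\bar u_h\|_{\bar A}^2.
\]

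For the middle inequality, both $\|\cdot\|_{\SsD}$ and $\|\cdot\|_{\eps,h,\partial}$ are semi-norms on $\VUUH$ that vanish on $\VUUHI$ and hence take the same value on $E\bar u_h$ as on any $\VUUHH$-representative. In particular $\|E^{\partial}\bar u_h\|_{\SsD}^2 = \|\mathcal{H}E\bar u_h\|_{\SsD}^2$ and $\|\mathcal{H}_{\eps}E\bar u_h\|_{\eps,h,\partial}^2 = \|E\bar u_h\|_{\eps,h,\partial}^2 = \|\mathcal{H}E\bar u_h\|_{\eps,h,\partial}^2$, so applying \eqref{eq::schur_schur_SPD} of Corollary~\ref{coro::schur_SPD} to $\mathcal{H}E\bar u_h \in \VUUHH$ produces the claimed bound.

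The main work is the left-most inequality, which, after dividing by $\gamma^{-1}$, reads $\|\bar u_h\|_{\bar A}^2 \lesssim \nu\|\mathcal{H}_{\eps}E\bar u_h\|_{\eps,h,\partial}^2$. I would set $(w_h,\hat w_h):=\mathcal{H}_{\eps}E\bar u_h$ and first establish $\IU(w_h,\hat w_h)=\bar u_h$ by combining Lemma~\ref{lemma::I_bubble} equation~\eqref{eq::IHeqI} with the fact that $\IV$ reproduces functions in $\Vbar_h$ at vertices, so $\IU E\bar u_h = \IV\bar u_h = \bar u_h$. Then $(I-E\IU)(w_h,\hat w_h) = (w_h - \bar u_h,\, \hat w_h - (\bar u_h)_t)$ and Corollary~\ref{coro::nc_to_c_approx} yields
\[
\|\eps(w_h - \bar u_h)\|_0^2 \lesssim \|(w_h,\hat w_h)\|_{\eps,h}^2.
\]
A triangle inequality together with the trivial bound $\|\eps(w_h)\|_0^2 \leq \|(w_h,\hat w_h)\|_{\eps,h}^2$ gives $\|\eps(\bar u_h)\|_0^2 \lesssim \|(w_h,\hat w_h)\|_{\eps,h}^2$, and by construction of $\mathcal{H}_{\eps}$ the right-hand side equals $\|\mathcal{H}_{\eps}E\bar u_h\|_{\eps,h,\partial}^2$. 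Multiplying by $\nu$ and recognising $\|\bar u_h\|_{\bar A}^2$ on the left then closes the chain.

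The principal obstacle is the bookkeeping required to identify the various semi-norm values on $E\bar u_h$ with norm values on its $\VUUHH$-representatives $\mathcal{H}E\bar u_h$ and $\mathcal{H}_{\eps}E\bar u_h$; once that identification is handled cleanly, the proof is essentially a short chain of invocations of the interpolation results in Section~\ref{ssec::intops} and the norm equivalences already established in Lemma~\ref{lemma::SPD} and Corollary~\ref{coro::schur_SPD}.
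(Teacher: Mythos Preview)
Your proposal is correct and follows essentially the same approach as the paper. The only cosmetic difference is that for the left-most bound you invoke Corollary~\ref{coro::nc_to_c_approx} and then a triangle inequality, whereas the paper applies the underlying Lemma~\ref{lemma::nc_to_c_nokorn_withbc} directly to $(u_h,\hat u_h):=\mathcal{H}_{\eps}E\bar u_h$ after the same identification $\IV u_h=\bar u_h$; both routes are equivalent since Corollary~\ref{coro::nc_to_c_approx} is just a repackaging of that lemma.
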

\begin{proof}
  The sharp upper bound is a
  consequence of the energy minimization
  \eqref{eq::doubleschurinf} and Corollary \ref{coro::embed},
  % the identity \eqref{eq::schur_SPD_cont},
  \begin{align*}
    \|E^{\partial}\bar{u}_h\|_{\SsD}^2
    = \|\mathcal{H}E\bar{u}_h\|_{\Ss}^2
    \leq \|E\bar{u}_h\|_{\Ss}^2
    =\|\bar{u}_h\|_{\bar{A}}^2.
  \end{align*}
  With $ (u_h,\hat{u}_h) := \mathcal{H}_{\eps}E\bar{u}_h$ and \eqref{eq::I_bubble}
  we conclude the identity
  $\IV u_h = \IV\mathcal{H}_{\eps} E \bar{u}_h = \IV E \bar{u}_h = \bar{u}_h$.
  Now, \eqref{eq::nc_to_c_nokorn_withbc} and \eqref{eq::jFjumpN} show
  \begin{align*}
    \|\bar{u}_h\|_{\bar{A}}^2 = 
    \nu\|\eps(\IV u_h)\|_0^2
    &\lesssim \nu\|\eps(u_h)\|_0^2  + \nu\|\eps(\IV u_h - u_h)\|_0^2 
     \lesssim \nu\|(u_h,\hat{u}_h)\|_{\eps,h}^2
    = \nu\|\mathcal{H}_{\eps}E\bar{u}_h\|_{\eps,h} ^2
  \end{align*}
  and the rest follows form the lower bound in \eqref{eq::schur_schur_SPD}.
\end{proof}
\begin{corollary}\label{coro::nc_to_c_approx_schur}
  For $(u_h, \hat{u}_h)\in\VUUH$ and either
  $(w_h, \hat{w}_h) := \mathcal{H}_{\eps}(I-E\IU)(u_h, \hat{u}_h)$
  or $(w_h, \hat{w}_h) := \mathcal{H}(I-E\IU)(u_h, \hat{u}_h)$
  there holds
  \begin{align}\label{eq::nc_to_c_approx_schur}
    \sum_{T\in\mesh}\Big(\|\nabla w_h\|_T^2 + h^{-2}\|w_h\|_{T}^2
    + \sum_{F\in\facetsT}\| \Pi^{\pdg-1}(w_h - \hat{w}_h)_t  \|_{j,F}^2\Big)
    \lesssim \|(u_h, \hat{u}_h)\|_{\eps, h, \partial}^2.
  \end{align}
\end{corollary}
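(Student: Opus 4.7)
The plan is to reduce the estimate to the already established Corollary \ref{coro::nc_to_c_approx} by exploiting the interpolation invariance from Lemma \ref{lemma::I_bubble}, and then recover the $\|\nabla\cdot\|_T$ and $h^{-2}\|\cdot\|_T^2$ contributions through a second application of Lemma \ref{lemma::nc_to_c_nokorn_withbc}, this time to $w_h$ itself.

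First, I would set $(\tilde{u}_h, \hat{u}_h) := \mathcal{H}_{\eps}(u_h, \hat{u}_h)$, so that $\tilde{u}_h - u_h \in V_h^\circ$ and $\|(\tilde{u}_h, \hat{u}_h)\|_{\eps,h} = \|(u_h, \hat{u}_h)\|_{\eps,h,\partial}$. By Lemma \ref{lemma::I_bubble} we have $\IV \tilde{u}_h = \IV u_h$, hence
\[ (I - E\IU)(\tilde{u}_h, \hat{u}_h) = (\tilde{u}_h - \IV u_h,\, \hat{u}_h - (\IV u_h)_t) \]
shares its normal trace on $\facets$ and its $\hat{u}$-component with $(w_h, \hat{w}_h)$, so both lie in the same $\VUUHI$-affine class. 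The key observation is that, whether $\mathcal{H}$ or $\mathcal{H}_{\eps}$ is used, the lifting produces a minimizer of a norm equivalent to $\|\cdot\|_{\eps,h}$: by definition for $\mathcal{H}_{\eps}$, and via Lemma \ref{lemma::SPD} for $\mathcal{H}$, since $\|(w_h,\hat{w}_h)\|_{\Ss}^2 \le \|(I-E\IU)(\tilde{u}_h, \hat{u}_h)\|_{\Ss}^2$ together with the absolute equivalence $\|\cdot\|_{\Ss}^2 \sim \nu\|\cdot\|_{\eps,h}^2$ translate to
\[ \|(w_h,\hat{w}_h)\|_{\eps,h}^2 \lesssim \|(I-E\IU)(\tilde{u}_h, \hat{u}_h)\|_{\eps,h}^2 \]
with no $\pdg$- or $\gamma$-dependent constants. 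Applying Corollary \ref{coro::nc_to_c_approx} to $(\tilde{u}_h, \hat{u}_h)$ then bounds the right hand side by $\|(\tilde{u}_h, \hat{u}_h)\|_{\eps,h}^2 = \|(u_h, \hat{u}_h)\|_{\eps,h,\partial}^2$, which already controls $\sum_{T\in\mesh}\|\eps(w_h)\|_T^2$ and the facet contributions $\sum_{T\in\mesh}\sum_{F\in\facetsT}\|\Pi^{\pdg-1}(w_h - \hat{w}_h)_t\|_{j,F}^2$.

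Second, to upgrade $\|\eps(w_h)\|_0$ to $\|\nabla w_h\|_T$ and pick up $h^{-2}\|w_h\|_T^2$, I would invoke Lemma \ref{lemma::nc_to_c_nokorn_withbc} with $u = w_h$. The essential point is that $\IV w_h = 0$: as $(w_h, \hat{w}_h)$ lies in the affine class of $(u_h - \IV u_h, \hat{u}_h - (\IV u_h)_t)$, we have $w_h = (u_h - \IV u_h) + b$ for some $b \in V_h^\circ$, and idempotence of $\IV$ on $\Vbar_h$ combined with $\IV b = 0$ (as in Lemma \ref{lemma::I_bubble}) yields the claim. The lemma then gives
\[ \sum_{T\in\mesh} h^{-2}\|w_h\|_T^2 + \|\nabla w_h\|_T^2 \lesssim \|\eps(w_h)\|_0^2 + \sum_{F\in\facetsI} h^{-1}\|\Pi^R_F\jump{w_h}\|_F^2. \]
Since $w_h\in V_h\subset H(\div,\Omega)$ has vanishing normal jump and $\hat{w}_h$ is single-valued, one may write $\jump{w_h} = \jump{(w_h - \hat{w}_h)_t}$. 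As rigid body modes restrict on $F$ to polynomials in $\Poly^1(F) \subseteq \Poly^{\pdg-1}(F)$ for $\pdg \ge 2$, the relation $\Pi^R_F = \Pi^R_F \Pi^{\pdg-1}_F$ combined with the characterization \eqref{eq::jFjumpN} yields $h^{-1}\|\Pi^R_F v\|_F^2 \lesssim \pdg^{-1}\|\Pi^{\pdg-1}_F v\|_{j,F}^2$, so the Korn jump term is absorbed into the facet contributions already bounded above.

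I anticipate the main subtlety to be the uniform treatment of both definitions $\mathcal{H}$ and $\mathcal{H}_{\eps}$: the $\mathcal{H}$-case does not minimize $\|\cdot\|_{\eps,h}$ directly, and one must carefully invoke Lemma \ref{lemma::SPD} rather than Corollary \ref{coro::schur_SPD} to keep the constant $\gamma$ out of the final estimate. The remaining argument is essentially bookkeeping around the invariance $\IV w_h = 0$ and the cancellation in $(I - E\IU)(\tilde{u}_h, \hat{u}_h)$ whose tangential facet trace reduces to $(\tilde{u}_h - \hat{u}_h)_t$, allowing Corollary \ref{coro::nc_to_c_approx} to be applied cleanly.
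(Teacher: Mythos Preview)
Your proof is correct but takes a different route from the paper's. The paper exploits operator identities ($\IU E\IU=\IU$ and $\IU\mathcal{H}_{\eps}=\IU$ from Lemma~\ref{lemma::I_bubble}) to rewrite $(w_h,\hat{w}_h)$ \emph{exactly} as $(I-E\IU)$ applied to $\mathcal{H}_{\eps}(I-E\IU)\mathcal{H}_{\eps}(u_h,\hat{u}_h)$, so that a single invocation of Corollary~\ref{coro::nc_to_c_approx} delivers all volume and facet terms at once; the rest is just energy minimization of $\mathcal{H}_{\eps}$ and continuity of $E\IU$. You instead proceed in two stages: first bound only $\|(w_h,\hat{w}_h)\|_{\eps,h}$ via energy minimization over the $\VUUHI$-affine class (using Lemma~\ref{lemma::SPD} for the $\mathcal{H}$-case), and then recover the full-gradient and $h^{-2}\|\cdot\|_T^2$ terms by applying Lemma~\ref{lemma::nc_to_c_nokorn_withbc} directly to $w_h$, using the crucial observation $\IV w_h=0$, and finally absorb the resulting rigid-body jump terms back into the already-controlled facet contributions via \eqref{eq::jFjumpN}. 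The paper's identity-based rewriting is shorter and avoids having to reason about $\IV w_h$ and the jump absorption; your approach is more explicit about where each term originates and, as you note, makes it transparent that going through Lemma~\ref{lemma::SPD} rather than Corollary~\ref{coro::schur_SPD} keeps the constant~$\gamma$ out of the estimate for the $\mathcal{H}$-case.
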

\begin{proof}
  With the readily apparent $\IU E\IU= \IU$ and \eqref{eq::IHeqI} we see
  \begin{align*}
    E\IU\mathcal{H}_{\eps}(I-E\IU) = 0
    \quad\text{and}\quad
    \mathcal{H}_{\eps}(I-E\IU)(u_h, \hat{u}_h) = \mathcal{H}_{\eps}(I-E\IU)\mathcal{H}_{\eps}(u_h, \hat{u}_h).
  \end{align*}
  % where \eqref{eq::IHeqI} gives us $\mathcal{H}_{\eps}$ in the last equality.
  This lets us insert a zero into $(w_h, \hat{w}_h)$ to obtain an expression without $\mathcal{H}_{\eps}$ in front,
  \begin{align*}
    (w_h, \hat{w}_h)
    = \mathcal{H}_{\eps}(I-E\IU)(u_h,\hat{u}_h)
    = (I-E\IU)\mathcal{H}_{\eps}(I-E\IU)\mathcal{H}_{\eps}(u_h,\hat{u}_h).
  \end{align*}
  Corollary \ref{coro::nc_to_c_approx}
  applied to $\mathcal{H}_{\eps}(I-E^{\partial}\IU)\mathcal{H}_{\eps}(u_h,\hat{u}_h)$ shows
  \begin{align*} %TODO:NU, also this could be a bit more concise
    \sum_{T\in\mesh} \Big(h^{-2}\|w_h\|_0^2 +& \|\nabla w_h\|_0^2
    + \sum_{F\in\facetsT}h^{-1}\| \Pi^{\pdg-1}(w_h - \hat{w}_h)_t  \|_{j,F}^2\big) 
    \lesssim  \|\mathcal{H}_{\eps}(I-E\IU)\mathcal{H}_{\eps}(u_h,\hat{u}_h)\|_{\eps,h}^2.
  \end{align*}
  The proof is concluded by the energy minimization of $\mathcal{H}_{\eps}$,
  \begin{align*}
    \|\mathcal{H}_{\eps}(I-E\IU)\mathcal{H}_{\eps}(u_h,\hat{u}_h)\|_{\eps,h}^2
    & \leq \|(I-E\IU)\mathcal{H}_{\eps}(u_h,\hat{u}_h)\|_{\eps,h}^2 
     \lesssim \|\mathcal{H}_{\eps}(u_h,\hat{u}_h)\|_{\eps,h}^2 
     = \|(u_h,\hat{u}_h)\|_{\eps,h,\partial}^2,
  \end{align*}
  where the continuity of $E\IU$ in the $\|\cdot\|_{\eps,h}$ norm follows from
  Lemma \ref{lemma::nc_to_c_nokorn_withbc}.
  The other case $(w_h, \hat{w}_h) := \mathcal{H}(I-E\IU)(u_h, \hat{u}_h)$ works analogously.
\end{proof}

An operator that, like $E\IU$, extracts a low order component out of
$(u_h,\hat{u}_h)\in\VUUH$ is
\begin{align}\label{def::projlo}
    \projlo:=\begin{cases}
      \VUUH \rightarrow \big(V_h\cap\Poly^1(\mesh, \rr^d)\big)\times\big(\hat{V}_h\cap\Poly^1(\facets,\rr^{\spacedim})\big)\\
      (u_h, \hat{u}_h)\mapsto (\mathcal{I}_{\BDM}^1u_h, \Pi^1_F \hat{u}_h),
      \end{cases}
\end{align}
where $\mathcal{I}_{\BDM}^1$ is the standard $\BDM^1$ interpolator, see \cite{brezzi2012mixed},
that is, $\forall F\in\facets$ there holds $(\mathcal{I}_{\BDM}^1u_h)_n = (\Pi^1_F(u_h))_n$.

\begin{corollary}\label{coro::projlo_approx}
  For $(u_h, \hat{u}_h)\in\VUUHH$ and
  $(w_h, \hat{w}_h) := \mathcal{H}_{\eps}(I-\projlo)(u_h, \hat{u}_h)$ there holds
  \begin{align}\label{eq::projlo_approx}
    \sum_{T\in\mesh}\Big(\|\nabla w_h\|_T^2 + h^{-2}\|w_h\|_{T}^2
    + \sum_{F\in\facetsT}\| \Pi^{\pdg-1}(w_h - \hat{w}_h)_t  \|_{j,F}^2\Big)
    \lesssim \|(u_h, \hat{u}_h)\|_{\eps, h, \partial}^2.
  \end{align}
\end{corollary}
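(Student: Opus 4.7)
My plan is to follow the template of the proof of Corollary \ref{coro::nc_to_c_approx_schur}, reducing the claim to that corollary plus a low-order correction that can be handled with standard local inverse and trace estimates. As a first step I would reduce to the case where $(u_h,\hat u_h)$ is $\eps$-harmonic: setting $(\tilde u_h,\hat u_h):=\mathcal{H}_\eps(u_h,\hat u_h)$, the difference $u_h-\tilde u_h\in V_h^{\circ}$ has vanishing normal facet trace, so $\mathcal{I}_{\BDM}^1(u_h-\tilde u_h)=0$ and hence $\projlo(u_h,\hat u_h)=\projlo(\tilde u_h,\hat u_h)$; combined with $u_h-\tilde u_h\in\ker\mathcal{H}_\eps$, this yields $(w_h,\hat w_h)=\mathcal{H}_\eps(I-\projlo)(\tilde u_h,\hat u_h)$ together with $\|(\tilde u_h,\hat u_h)\|_{\eps,h}=\|(u_h,\hat u_h)\|_{\eps,h,\partial}$.

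Next, since $\mathcal{I}_{\BDM}^1$ and $\Pi^1_F$ both act as the identity on $\mathbb{P}^1$ fields (on the element and on the facet respectively), one has $\projlo E=E$, and a direct computation yields the operator identities $(I-\projlo)(I-E\IU)=I-\projlo$ and $E\IU-\projlo=-\projlo(I-E\IU)$. Setting $(q_h,\hat q_h):=(I-E\IU)(\tilde u_h,\hat u_h)$, these lead to the decomposition
\begin{align*}
(w_h,\hat w_h)=\mathcal{H}_\eps(q_h,\hat q_h)-\mathcal{H}_\eps\projlo(q_h,\hat q_h).
\end{align*}
The LHS of \eqref{eq::projlo_approx} evaluated at the first summand is exactly the quantity bounded in Corollary \ref{coro::nc_to_c_approx_schur} applied to $(\tilde u_h,\hat u_h)$, hence $\lesssim \|(u_h,\hat u_h)\|_{\eps,h,\partial}^2$.

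For the second summand $(r_h,\hat r_h):=\mathcal{H}_\eps\projlo(q_h,\hat q_h)$, the hybrid part $\hat r_h=\Pi^1_F\hat q_h$ is piecewise $\mathbb{P}^1$, and $r_h$ differs from the piecewise $\mathbb{P}^1$ vector field $\mathcal{I}_{\BDM}^1 q_h$ only by an inner bubble $b_h\in V_h^{\circ}$ chosen to minimize $\|\cdot\|_{\eps,h}$. I would bound the LHS elementwise in terms of $q_h,\hat q_h$ using (i) the standard scaling $\|\mathcal{I}_{\BDM}^1 q_h\|_T\lesssim h^{1/2}\|(q_h)_n\|_{\partial T}$ together with the discrete trace inequality to control $\|(q_h)_n\|_{\partial T}^2$ by $h^{-1}\|q_h\|_T^2+h\|\nabla q_h\|_T^2$; (ii) the finite-dimensional Korn--Poincar\'e inequality $h^{-2}\|b_h\|_T^2\lesssim\|\eps(b_h)\|_T^2$ valid on $V_h^{\circ}$, since the kernel of $\eps$ on normal-trace-free BDM bubbles is trivial; and (iii) the element-wise inverse inequality $\|\nabla v_h\|_T^2\lesssim\|\eps(v_h)\|_T^2+h^{-2}\|v_h\|_T^2$ to pass from $\eps$-bounds to $\nabla$-bounds. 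Summing over $T$, invoking the global $\mathcal{H}_\eps$ minimality, and applying Corollary \ref{coro::nc_to_c_approx} to $(q_h,\hat q_h)$ then controls everything by $\|(\tilde u_h,\hat u_h)\|_{\eps,h}^2=\|(u_h,\hat u_h)\|_{\eps,h,\partial}^2$, closing the estimate.

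The hard part will be controlling the $L^2$ bubble contribution $h^{-2}\|r_h\|_T^2$: $\mathcal{H}_\eps$ minimizes only the $\eps,h$-norm and is a priori free to pick inner bubbles of arbitrary $L^2$-size, so the key is the Korn--Poincar\'e-type estimate on $V_h^{\circ}$ that tightly couples the $L^2$ norm of the bubble to its (controlled) $\eps$-norm. A smaller but equally fiddly point is handling the facet jump term for $(r_h,\hat r_h)$, where both $(\mathcal{I}_{\BDM}^1 q_h)_t$ and $\Pi^1_F\hat q_h$ contribute via low-order evaluations of the $\|\cdot\|_{j,F}$-norm and must be traced back through trace and inverse estimates on $\mathbb{P}^1$ polynomials.
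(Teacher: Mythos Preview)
Your proposal is correct but takes a genuinely different route from the paper's. The paper's (tersely stated) proof directly mimics the structure of the proof of Corollary~\ref{coro::nc_to_c_approx_schur} with $\projlo$ in place of $E\IU$: it uses that $\projlo$ annihilates $V_h^{\circ}$ bubbles (the analogue of Lemma~\ref{lemma::I_bubble}) to write
\[
(w_h,\hat w_h)=(I-\projlo)\mathcal{H}_\eps(I-\projlo)\mathcal{H}_\eps(u_h,\hat u_h),
\]
and then establishes the local analogue of Corollary~\ref{coro::nc_to_c_approx} for $I-\projlo$ via the Bramble--Hilbert lemma (exactness of $\projlo$ on $\Poly^1$, hence on rigid bodies, so one may subtract $E_Tu_h$) together with the element-level Korn estimate~\eqref{eq::pwrb_est}. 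No bubble Korn--Poincar\'e inequality is needed, because the outermost $(I-\projlo)$ is applied to a concrete function and bounded directly.

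Your approach instead decomposes through $E\IU$, reuses Corollary~\ref{coro::nc_to_c_approx_schur} for one piece, and then controls the low-order correction $\mathcal{H}_\eps\projlo(q_h,\hat q_h)$ explicitly. This is more modular but pays for it with the Korn--Poincar\'e estimate $h^{-2}\|b_h\|_T^2\lesssim\|\eps(b_h)\|_T^2$ on $V_h^{\circ}$, which the paper's route avoids entirely. Two small comments: first, in your facet-jump discussion for $(r_h,\hat r_h)$ you should not forget the high-order contribution $(b_h)_t$ of the bubble's tangential trace (it is \emph{not} low order), though it is easily absorbed via $\|(b_h)_t\|_{j,F}^2\lesssim h^{-2}\|b_h\|_T^2$ and your bubble estimate; second, your bubble Korn--Poincar\'e constant is a priori $\pdg$-dependent, which the paper's convention allows here but which the direct route sidesteps.
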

\begin{proof}
  Follows from the Bramble-Hilbert Lemma, an element-level Korn inequality
  and Lemma \ref{lemma::I_bubble} with similar arguments as the
  previous Corollary \ref{coro::nc_to_c_approx_schur}.
\end{proof}

\begin{theorem}\label{theorem::aux_pc_schur}
  Let $\mDM$ be the block Jacobi preconditioner for $\SsDM$,
  consisting of one block per facet $F\in\facets$ that contains all
  $\VUUHH$ degrees of freedom associated to $F$.
  Let $\mC$ be an SPD preconditioner for $\Abm$ such that
  $\mC \sim \Abm$.
  Then, conditions \eqref{eq::fict_cont} and \eqref{eq::fict_stab} of Lemma \ref{lemma::fictspace}
  are fulfilled for $H = \VUUHH$, $\HT=\VUUHH\times\Vbar_h$,
  \begin{align*}
    \Pi:\HT\rightarrow H : ((u_h, \hat{u}_h), \bar{u}_h) \mapsto (u_h, \hat{u}_h) + E^{\partial}\bar{u}_h,
  \end{align*}
  and $\tilde{A}:=\operatorname{diag}(M^{\partial}, C)$ with $c_0\lesssim 1$ and $c_1\lesssim\gamma\cdot(\log{\pdg})^3$.
  That is, for $(\SsDMHA)^{-1} := (\mDM)^{-1} + \mDE \mC^{-1} \mDET$
  % \begin{align*}%\label{eq::aux_pc_add_schur_def}
  %   (\SsDMHA)^{-1} := (\mDM)^{-1} + \mDE \mC^{-1} \mDET
  % \end{align*}
  there holds
  \begin{align*}%\label{eq::aux_pc_add_kappa}
    \SsDMHA \lesssim  \SsDM \lesssim \gamma\cdot(\log{\pdg})^{3} \SsDMHA.
  \end{align*}
\end{theorem}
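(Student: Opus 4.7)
The plan is to verify the continuity and stability hypotheses of Lemma~\ref{lemma::fictspace}, working in the $(\vu^{\partial}, \vuh)$-coordinatization of $\VUUHH$ under which the blocks of $\mDM$ form a clean facet-indexed partition.

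\emph{Continuity.} For any $((u_h, \hat u_h), \bar u_h) \in \HT$, I would first apply the triangle inequality to $\Pi$, then use Corollary~\ref{coro::embed_schur} on the $E^{\partial}\bar u_h$ piece and the spectral equivalence $\mC \sim \bar{\mat{A}}$ to obtain $\|\Pi \cdot \|_{\SsD}^2 \lesssim \|(u_h, \hat u_h)\|_{\SsD}^2 + \|\bar u_h\|_C^2$. Because the facet-block coupling in $\SsDM$ has bounded combinatorial range (two facets couple only if they share an element), a generalized Cauchy--Schwarz argument on the block decomposition gives $\SsDM \lesssim \mDM$. The first term is thus absorbed into the $\tilde A$-norm, yielding $c_0 \lesssim 1$.

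\emph{Stability decomposition.} Given $(u_h, \hat u_h) \in \VUUHH$, I propose $\bar u_h := \IU(u_h, \hat u_h)$ and $(w_h, \hat w_h) := (u_h, \hat u_h) - E^{\partial}\bar u_h$. Using $E^{\partial} = \mathcal H E$ together with $(u_h, \hat u_h) = \mathcal H(u_h, \hat u_h)$, one rewrites $(w_h, \hat w_h) = \mathcal H(I - E\IU)(u_h, \hat u_h) \in \VUUHH$, which fits the second case of Corollary~\ref{coro::nc_to_c_approx_schur}. The $\bar u_h$ piece is handled as in the proof of Corollary~\ref{coro::embed_schur}: identity \eqref{eq::IHeqI} gives $\IV u_h = \IV \tilde u_h$, where $\tilde u_h$ is the $V_h$-component of $\mathcal H_{\eps}(u_h, \hat u_h)$; Lemma~\ref{lemma::nc_to_c_nokorn_withbc} applied to $\IV \tilde u_h$ and Corollary~\ref{coro::schur_SPD} then give $\|\bar u_h\|_C^2 \lesssim \nu \|(u_h, \hat u_h)\|_{\eps,h,\partial}^2 \lesssim \gamma \|(u_h, \hat u_h)\|_{\SsD}^2$.

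\emph{Block-Jacobi bound.} Because the facet-dof partition of $\VUUHH$ is exactly disjoint, one has the identity $\|(w_h, \hat w_h)\|_{M^{\partial}}^2 = \sum_{F \in \facets}\|(w_h, \hat w_h)^F\|_{\SsD}^2$, where $(w_h, \hat w_h)^F \in \VUUHH$ carries only the facet-$F$ dofs of $(w_h, \hat w_h)$. For each $F$, the upper bound in Corollary~\ref{coro::schur_SPD} and the energy minimization of $\mathcal H_{\eps}$ give
\begin{align*}
\|(w_h, \hat w_h)^F\|_{\SsD}^2 \leq \nu \|\mathcal H_{\eps}(w_h, \hat w_h)^F\|_{\eps,h}^2 = \nu \sum_{T : F \in \facetsT}\|(w_h, \hat w_h)\|_{\eps,F,T,0}^2,
\end{align*}
the last identity because $V_h^{\circ}(T)$ parametrizes exactly the polynomials admissible in the $\|\cdot\|_{\eps,F,0}$-infimum (matching normal trace on $F$, vanishing on other facets). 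Corollary~\ref{coro::traceZ} replaces $\|\cdot\|_{\eps,F,T,0}^2$ by $(\log k)^3 \|\cdot\|_{\eps,F,T}^2$; swapping summation orders and taking $w = w_h|_T$ in the $\|\cdot\|_{\eps,F,T}$-infimum delivers $\sum_F \sum_T \|\cdot\|_{\eps,F,T}^2 \lesssim \|(w_h, \hat w_h)\|_{\eps,h}^2$. Finally, Corollary~\ref{coro::nc_to_c_approx_schur} bounds the right side by $\|(u_h, \hat u_h)\|_{\eps,h,\partial}^2$ and Corollary~\ref{coro::schur_SPD} contributes the factor $\gamma$, closing $c_1 \lesssim \gamma(\log k)^3$.

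\emph{Main obstacle.} The delicate step is verifying the hypothesis $\Pi^R_F((w_h)_n n + (\hat w_h)_t) = 0$ needed to apply Corollary~\ref{coro::traceZ}. A direct computation reduces this to $\Pi^R_F((u_h)_n n + \hat u_h - \IV u_h)|_F = 0$ on each $F$, a statement about how the vertex-averaging interpolant $\IV$ captures the rigid-body content of the composite normal-tangential trace $(u_h)_n n + \hat u_h$. This must be checked using the $\Poly^1$-reproducing property of $\IV$ together with the precise dof structure of $V_h$ and $\hat V_h$ on $F$; if a direct verification is not attainable, one can augment $\IU$ by an additional facet-local rigid-body correction that transfers the offending part into $\bar u_h$ at the cost of only constants in the stability estimate above.
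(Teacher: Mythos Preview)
Your overall architecture matches the paper's, and you have correctly isolated the single genuine difficulty: the hypothesis $\Pi^R_F\big((w_h)_n n + \hat w_h\big) = 0$ required by Corollary~\ref{coro::traceZ}. However, neither of your proposed resolutions works. The direct verification fails: the paper explicitly notes that the condition ``is not usually true for $(I - E^{\partial}\IU)(u_h,\hat u_h)$'', and indeed $\IV$ averages vertex values of $u_h$ alone while the rigid-body content of the facet trace also depends on the independent variable $\hat u_h$, so there is no mechanism forcing equality. Your fallback of a ``facet-local rigid-body correction transferred into $\bar u_h$'' is not viable either, since $\bar u_h \in \Vbar_h$ is a globally $H^1$-conforming $\Poly^1$ function and cannot absorb independent per-facet rigid-body corrections; such corrections would have to land back in the $\VUUHH$ component, reproducing the same obstacle.

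The paper's resolution is different and more structural. It introduces the low-order projector $\projlo$ of \eqref{def::projlo} onto $(V_h\cap\Poly^1(\mesh))\times(\hat V_h\cap\Poly^1(\facets))$ and splits $(I - E^{\partial}\IU)(u_h,\hat u_h)$ into $\mathcal H\projlo(\cdot)$ and $(I-\mathcal H\projlo)(\cdot)$. The high-order part simplifies to $(I-\mathcal H\projlo)(u_h,\hat u_h)$ because $\mathcal H\projlo E^{\partial}\IU = E^{\partial}\IU$; since $\projlo$ reproduces the full $\Poly^1\supseteq\RM$ content of the facet traces and $\mathcal H$ does not alter those traces, Corollary~\ref{coro::traceZ} now applies legitimately and yields the $(\log k)^3$ factor. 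The low-order part is handled not by the trace estimate at all but by a $k$-independent inverse inequality \eqref{eq::aux_pc_schur::invest} (valid because the functions are piecewise $\Poly^1$), after which the resulting $h^{-2}$-weighted $L^2$ and jump terms are controlled via Corollaries~\ref{coro::nc_to_c_approx_schur} and~\ref{coro::projlo_approx}. This low/high splitting, rather than a modification of $\IU$, is the missing idea.
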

\begin{proof}
  The continuity condition \eqref{eq::fict_cont} holds as
  $\|E^{\partial}\bar{u}_h\|_{\SsD} \leq \|\bar{u}_h\|_{\bar{A}}$
  is shown in Corollary \ref{coro::embed_schur}
  and $\|(u_h, \hat{u}_h)\|_{\SsD}^2 \lesssim \|(u_h, \hat{u}_h)\|_{M^{\partial}}^2$
  follows from limited overlap of basis functions.
  For some $(u_h, \hat{u}_h)\in H = \VUUH$, the choice  
  \begin{align*}
    \tilde{v}:= ((I - E^{\partial}\IU)(u_h, \hat{u}_h), \IU(u_h, \hat{u}_h)) \in \HT,
  \end{align*}
  fulfills $(u_h, \hat{u}_h) = \Pi \tilde{v}$.
  The stability condition \eqref{eq::fict_stab} is verified by showing
  \begin{align} \label{eq::aux_pc_schur::toshow}
    \|(I - E^{\partial}\IU)(u_h, \hat{u}_h)\|_{M^{\partial}}^2 + \|\IU(u_h, \hat{u}_h)\|_C^2
    \lesssim \gamma\cdot(\log{\pdg})^3 \|(u_h, \hat{u}_h)\|_{\SsD}^2.
  \end{align}
  For the second term, $C\lesssim \bar{A}$ and Corollary \ref{coro::embed_schur}
  bound it by $\|E^{\partial}\IU(u_h, \hat{u}_h)\|_{\eps,h,\partial}^2$ which is then further bounded by the $\|\cdot\|_{\SsD}$ norm
  with continuity of $E^{\partial}\IU$ in $\|\cdot\|_{\eps,h,\partial}$, as implied by Corollary \ref{coro::nc_to_c_approx_schur},
  and \eqref{eq::schur_schur_SPD} where we incur the factor $\gamma$.
  % \begin{align*}
  %   \|\IU(u_h, \hat{u}_h)\|_C^2
  %   & \lesssim \|\IU(u_h, \hat{u}_h)\|_{\bar{A}}^2 \\
  %   & \lesssim \|E^{\partial}\IU(u_h, \hat{u}_h)\|_{\eps,h,\partial}^2 \\
  %   & \lesssim \|(u_h, \hat{u}_h)\|_{\eps,h,\partial}^2 + \|(I - E^{\partial}\IU)(u_h, \hat{u}_h)\|_{\eps,h,\partial}^2 \\
  %   & \lesssim \|(u_h, \hat{u}_h)\|_{\eps,h,\partial}^2 \lesssim \gamma \|(u_h, \hat{u}_h)\|_{A}^2 
  % \end{align*}
  % % passt: \|Iu\|_C->\|Iu\|Abar->\|HepsEIu\|epsh->triangle and Coro nc_to_c_approx_schur
  The other bound requires a more careful approach.
  For general $(v_h, \hat{v}_h)\in\VUUHH$, and therefore also for $(I - E^{\partial}\IU)(u_h, \hat{u}_h)\in\VUUHH$,
  the lower bound in \eqref{eq::schur_schur_SPD} shows
  \begin{align}\label{eq::aux_pc_schur_upper}
    \nu\sum_{F\in\facets} \|(v_h, \hat{v}_h)\|_{\eps,F}^2
    \lesssim
    \nu\|(v_h, \hat{v}_h)\|_{\eps,h,\partial}^2
    \lesssim
    \gamma \|(v_h, \hat{v}_h)\|_{\SsD}^2.
  \end{align}
  In the first step we bounded the facet terms in the sum, where an infimum is taken
  over functions with arbitrary traces on neighboring faces, 
  by $\|(v_h, \hat{v}_h)\|_{\eps,h,\partial}$, where these traces are fixed.
  On the other hand, the upper
  inequality in \eqref{eq::schur_schur_SPD} shows
  \begin{align}\label{eq::aux_pc_schur_lower}
    \|(v_h, \hat{v}_h)\|_{M^{\partial}}^2
    = \sum_{F\in\facets}\|(v_h, \hat{v}_h)^{(F)}\|_{\SsD}^2
    \lesssim \nu\sum_{F\in\facets}\|(v_h, \hat{v}_h)\|_{\eps,F,0}^2,
  \end{align}
  where $(v_h, \hat{v}_h)^{(F)}$ denotes
  the element of $\VUUHH$ that has the %TODO: die ^(F) notation vlt wo anders hernehmen
  same coordinates
  as $(v_h, \hat{v}_h)$ for degrees of freedom associated to $F$
  and whose degrees of freedom are zero otherwise
  (Galerkin isomorphism $\GalEqD$).
  Given the continuity of $E^{\partial}\IU$ in $\|\cdot\|_{\eps,h,\partial}$
  (see Corollary \ref{coro::nc_to_c_approx_schur}),
  the crucial step is therefore to bound $\|\cdot\|_{\eps,F,0}$ by
  $\|\cdot\|_{\eps,F}$, as in Corollary \ref{coro::traceZ}.
  However, Corollary \ref{coro::traceZ} is only applicable if $\Pi^R_F((v_h)_nn+\hat{v}_h) = 0~\forall F\in\facets$,
  which is not usually true for $(I - E^{\partial}\IU)(u_h, \hat{u}_h)$.
  % This is not an issue for \emph{low order} functions
  % $(v_h,\hat{v}_h)\in\VUUH \cap(\Poly^1(\mesh)\times\Poly^1(\facets))$,
  % for which a standard inverse estimate
  % For approximation errors $(v_h,\hat{v}_h)=(I-E\IU)(u_h,\hat{u}_h)$,
  % this can then further be bounded by $\|(u_h,\hat{u}_h)\|_{\eps,h}^2$ with Corollary
  % \ref{coro::nc_to_c_approx}.

  This does not pose a problem for low-order functions
  or, crucially, their harmonic extensions,
  where an alternative path via an inverse inequality
  bypasses the trace estimate.
  For a low order $(v_h,\hat{v}_h)\in\VUUH \cap(\Poly^1(\mesh)\times\Poly^1(\facets))$,
  a standard, and necessarily $k$-independent, inverse estimate is
  \begin{align}\label{eq::aux_pc_schur::invest}
    \sum_{F\in\facets}\|(v_h,\hat{v}_h)^{(F)}\|_{\eps,h}^2
    \lesssim\sum_{T\in\mesh} h^{-2}\|v_h\|_{T}^2
    + \sum_{F\in\facets}\|\Pi^{\pdg-1}(v_h-\hat{v}_h)_t\|_{j,F}^2.
  \end{align}
  Because of the energy minimization in
  $\|\cdot\|_{\eps,h,\partial}$, the estimate holds with the same constant also
  for discrete harmonix extensions
  $(v_h,\hat{v}_h)\in\mathcal{H}_{\eps}\big(\VUUH \cap (\Poly^1(\mesh)\times\Poly^1(\facets))\big)$
  of these low order functions where
  $\|(v_h,\hat{v}_h)^{(F)}\|_{\eps,h} = \|(v_h,\hat{v}_h)^{(F)}\|_{\eps,h,\partial}$.
  For approximation errors, the right hand side can then further be bounded 
  Corollary~\ref{coro::nc_to_c_approx_schur} and Corollary~\ref{coro::projlo_approx}.
  % For the approximation error $(v_h,\hat{v}_h)=(I-E^{\partial}\IU)(u_h,\hat{u}_h)$,
  % this can then further be bounded by $\|(u_h,\hat{u}_h)\|_{\eps,h}^2$ with Corollary
  % \ref{coro::nc_to_c_approx}.
  % % holds with the same $\pdg$-independent constant 
  %holds with the same $\pdg$-independent constant for both low order functions and
  %their discrete harmonic extensions
  % and \eqref{eq::aux_pc_schur::toshow} follows with
  % Corollary \ref{coro::nc_to_c_approx} and \eqref{eq::schur_schur_SPD}.
  % For a general high order function, such
  % an inverse estimate would hold only with a $\pdg$-dependent constant.
  % However, crucially, due to the energy minimization in
  % $\|\cdot\|_{\eps,h,\partial}$, the inverse estimate \eqref{eq::aux_pc_schur::invest}
  % holds with the same $\pdg$-independent constant for discrete harmonix extensions
  % %holds with the same $\pdg$-independent constant for both low order functions and
  % %their discrete harmonic extensions
  % $(v_h,\hat{v}_h)\in\mathcal{H}_{\eps}\big(\VUUH \cap
  % (\Poly^1(\mesh)\times\Poly^1(\facets))\big)$ of these low order functions.
  % % Note that for such $(v_h,\hat{v}_h)$,
  % % the norms on the left side of \eqref{eq::aux_pc_schur::invest} are
  % % actucally $\|\cdot\|_{\eps,F,0}$. % TODO: brauchma den satz echt?

  Therefore, the strategy is to use the operator $\projlo$ as defined in \eqref{def::projlo}
  to split the $\|\cdot\|_{M^{\partial}}$ term in \eqref{eq::aux_pc_schur::toshow} into
  low and high order components.
  The former can then be bounded via the inverse estimate
  and the latter via the trace inequality, we have
  \begin{align*}
    \|(I - E^{\partial}\IU)(u_h, \hat{u}_h)\|_{M^{\partial}}^2 ~\lesssim~ 
    &\|\mathcal{H}\projlo(I - E^{\partial}\IU)(u_h, \hat{u}_h)\|_{M^{\partial}}^2 
    +  \|(I - \mathcal{H}\projlo)(I - E^{\partial}\IU)(u_h, \hat{u}_h)\|_{M^{\partial}}^2.
  \end{align*}
  As for $(v_h,\hat{v}_h)\in\VUUH$,
  the (low order) normal trace of the $V_h$ component of $E\IU(v_h, \hat{v}_h)$
  and the entire $\hat{V}_h$ component are not changed by $\mathcal{H}$ %, see Lemma \ref{lemma::I_bubble},
  there holds
  \begin{align*}
  \projlo E^{\partial}\IU(v_h, \hat{v}_h)
  = \projlo \mathcal{H}E\IU(v_h, \hat{v}_h)
  = \projlo E\IU(v_h, \hat{v}_h)
  = E\IU(v_h, \hat{v}_h).
  \end{align*}
  That is
  $\projlo E^{\partial}\IU = E\IU$, therefore $\mathcal{H}\projlo E^{\partial}\IU = \mathcal{H}E\IU = E^{\partial}\IU$
  and the high order term can be simplified, 
  \begin{align*}
    \|(I - \mathcal{H}\projlo)(I - E^{\partial}\IU)(u_h, \hat{u}_h)\|_{M^{\partial}}^2
    = & \|(I - \mathcal{H}\projlo)(u_h, \hat{u}_h)\|_{M^{\partial}}^2.
  \end{align*}
  Note that we can apply Corollary \ref{coro::traceZ} not only to $(I -
  \projlo)(u_h,\hat{u}_h)$, which is apparent from the definition of
  $\projlo$, but also to $(I - \mathcal{H}\projlo)(u_h,\hat{u}_h)$
  because again, as argued above, $\mathcal{H}$ does not change the relevant traces.
  Therefore, \eqref{eq::aux_pc_schur_lower}, \eqref{eq::traceZ}
  and then \eqref{eq::aux_pc_schur_upper} show
  \begin{align*}
    \|(I - \mathcal{H}\projlo)(u_h, \hat{u}_h)\|_{M^{\partial}}^2 % \|(I - \mathcal{H}\projlo)(I - E^{\partial}\IU)(u_h, \hat{u}_h)\|_{M^{\partial}}^2
    \lesssim & \nu (\log{\pdg})^3 \|(I - \mathcal{H}\projlo)(u_h, \hat{u}_h)\|_{\eps,h,\partial}^2 
    \lesssim  \nu (\log{\pdg})^3 \|(u_h, \hat{u}_h)\|_{\eps,h,\partial}^2,
  \end{align*}
  where the continuity of $\mathcal{H}\projlo$ used in the last estimate follows from
  the Bramble Hilbert Lemma as in the proof of Corollary~\ref{coro::projlo_approx}.
  Finally, the bound
  \begin{align*}
    \|(I - \mathcal{H}\projlo)(I - E^{\partial}\IU)(u_h, \hat{u}_h)\|_{M^{\partial}}^2 \lesssim & \gamma\cdot(\log{\pdg})^3 \|(u_h, \hat{u}_h)\|_{\SsD}^2.
  \end{align*}
  follows with \eqref{eq::schur_schur_SPD}.
  As for the low order term,
  % with the previously shown identity
  with
  $\mathcal{H}\projlo E^{\partial}\IU =
  \mathcal{H}E\IU$
  we see
  \begin{align*}
    \|\mathcal{H}\projlo(I - E^{\partial}\IU)(u_h, \hat{u}_h)\|_{M^{\partial}}^2
    & = \|\mathcal{H}(\projlo - E\IU)(u_h, \hat{u}_h)\|_{M^{\partial}}^2
  \end{align*}
  and applying \eqref{eq::aux_pc_schur::invest} to the (harmonic extension of)
  the low order function $(\projlo - E\IU)(u_h, \hat{u}_h)$ gives
  \begin{align*}
    \|\mathcal{H}\projlo(I - E^{\partial}\IU)(u_h, \hat{u}_h)\|_{M^{\partial}}^2
    \leq \nu &\sum_{F\in\facets} \|\big(\mathcal{H}(\projlo - E\IU)(u_h, \hat{u}_h)\big)^{(F)}\|_{\eps,h,\partial}^2 \\
    = \nu&\sum_{F\in\facets} \|\big(\mathcal{H}_{\eps}(\projlo - E\IU)(u_h, \hat{u}_h)\big)^{(F)}\|_{\eps,h}^2 \\
    \lesssim \nu&  \Big(\sum_{T\in\mesh} h^{-2}\|w_h\|_T^2 
         + \!\! \sum_{F\in\facetsT} \|\Pi^{\pdg-1}(w_h - \hat{w}_h)_t\|_{j,F}^2\Big),
  \end{align*}
  where we write $(w_h,\hat{w}_h):=\mathcal{H}_{\eps}(\projlo - E\IU)(u_h, \hat{u}_h)$.
  We further split $(w_h,\hat{w}_h)$ into
  $(\alpha_h,\hat{\alpha}_h) := \mathcal{H}_{\eps}(I - E\IU)(u_h, \hat{u}_h)$
  and
  $(\beta_h,\hat{\beta}_h) := \mathcal{H}_{\eps}(I - \projlo)(u_h, \hat{u}_h)$
  and get
  \begin{align*}
  \|\mathcal{H}\projlo(I -
    E^{\partial}\IU)(u_h, \hat{u}_h)\|_{M^{\partial}}^2
    \lesssim & \nu \Big(\sum_{T\in\mesh} h^{-2}\|\alpha_h\|_T^2
               + \!\! \sum_{F\in\facetsT} \|\Pi^{\pdg-1}(\alpha_h - \hat{\alpha}_h)_t\|_{j,F}^2 \Big)\\
             & + \nu \Big(\sum_{T\in\mesh} h^{-2}\|\beta_h\|_T^2
               +\!\!\sum_{F\in\facetsT} \|\Pi^{\pdg-1}(\beta_h - \hat{\beta}_h)_t\|_{j,F}^2 \Big).
  \end{align*}
  Corollary \ref{coro::nc_to_c_approx_schur} and
  \eqref{eq::schur_schur_SPD} bound the former two terms,
  \begin{align*}
    \nu \Big(\sum_{T\in\mesh} h^{-2}\|\alpha_h\|_T^2
    + \!\! \sum_{F\in\facetsT} \|\Pi^{\pdg-1}(\alpha_h - \hat{\alpha}_h)_t\|_{j,F}^2\Big)
    \lesssim \nu\|(u_h, \hat{u}_h)\|_{\eps,h,\partial}
    \lesssim \gamma \|(u_h, \hat{u}_h)\|_{\SsD},
  \end{align*}
  and Corollary~\ref{coro::projlo_approx} and
  \eqref{eq::schur_schur_SPD} the latter two,
  \begin{align*}
    \nu \Big( h^{-2}\|\beta_h\|_T^2
    + \sum_{F\in\facetsT} \|\Pi^{\pdg-1}(\beta_h - \hat{\beta}_h)_t\|_{j,F}^2 \Big)
    \lesssim \nu \|(u_h, \hat{u}_h)\|_{\eps,h,\partial}
    \lesssim \gamma \|(u_h, \hat{u}_h)\|_{\SsD}.
  \end{align*}
\end{proof}

\begin{proof}[Proof of Theorem \ref{theorem::aux_pc}]
  Similarly to the proof of Theorem \ref{theorem::aux_pc_schur},
  the continuity condition follows from Corollary \ref{coro::embed},
  limited overlap of basis functions and this time also limited
  overlap of the Jacobi blocks themselves.
  Also similarly, the stability condition is proven by setting
  $\tilde{v}:= ((I - E\IU)(u_h, \hat{u}_h), \IU(u_h, \hat{u}_h)) \in \HT$
  and using Corollary \ref{coro::nc_to_c_approx}.
  The bound $\|(I - E\IU)(u_h, \hat{u}_h)\|_{M}^2 \lesssim \|(u_h,\hat{u}_h)\|_{A}^2$
  follows from 
  \begin{align*}
    \|(I - E^{\partial}\IU)(u_h, \hat{u}_h)\|_{M}^2 \lesssim  \gamma\cdot(\log{\pdg})^3\|(u_h,\hat{u}_h)\|_{A}^2,
  \end{align*}
  which was already shown in the proof of Theorem \ref{theorem::aux_pc_schur}, and the estimate
  \begin{align*}
    \|(E - E^{\partial})\IU(u_h, \hat{u}_h)\|_{M}^2
    = \|(I-\mathcal{H})E\IU(u_h, \hat{u}_h)\|_{M}^2  \lesssim \|(u_h,\hat{u}_h)\|_{A}^2.
  \end{align*}
  %and the triangle inequality.
  It holds because $(I-\mathcal{H})E\IU(u_h, \hat{u}_h)$
  is a normal bubble, that is all its coupling degrees of freedom are zero,
  and $\Ss$ restricted to such functions is block diagonal.
\end{proof}
\begin{corollary}\label{coro::aux_pc_mult}
  Let $\SsMHM$ and $\SsDMHM$ be the multiplicative versions of $\SsMH$ and $\SsDMH$, respectively,
  with the Block-Jacobi smoothers $\mM$, $\mDM$ replaced by Block-Gauss-Seidel sweeps
  and let $\Abm \leq \mC$.
  Then there holds
  \begin{align}
    \gamma^{-1}\cdot(\log{\pdg})^{-3}\SsMHM  \lesssim &\SsM \leq \SsMHM, \label{eq::aux_pc_mult} \\
    \gamma^{-1}\cdot(\log{\pdg})^{-3}\SsDMHM \lesssim &\SsDM \leq \SsDMHM. \label{eq::aux_pc_mult_schur}
  \end{align}
\end{corollary}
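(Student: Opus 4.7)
The plan is to apply Lemma \ref{lemma::add_to_mult} to each of the two additive preconditioners $\SsMHA$ and $\SsDMHA$ from Theorems \ref{theorem::aux_pc} and \ref{theorem::aux_pc_schur}, respectively. Those theorems already verified the fictitious space conditions \eqref{eq::fict_cont} and \eqref{eq::fict_stab} with $c_0\lesssim 1$ and $c_1\lesssim \gamma\cdot(\log \pdg)^3$, so what remains is to check that the additional hypotheses of Lemma \ref{lemma::add_to_mult} hold, and then read off the resulting spectral equivalence.

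The smoother condition on $M$ is immediate: the statement itself replaces the Block-Jacobi smoother with a Block-Gauss-Seidel sweep, which is precisely the alternative explicitly allowed in Lemma \ref{lemma::add_to_mult}, so no scaling of $M$ is needed. For the second hypothesis, namely $\Pi_1^* A \Pi_1 \leq \tilde{A}_1$, the relevant $\Pi_1$ in our setting is the embedding matrix $\mE$ (respectively $\mDE$) and $\tilde{A}_1 = \mC$. For the full Schur complement, Corollary \ref{coro::embed} gives the identity $\mE^T\SsM\mE = \Abm$; for the condense block, Corollary \ref{coro::embed_schur} yields $(\mDE)^T\SsDM\mDE \leq \Abm$. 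Combined with the assumption $\Abm\leq \mC$ both reduce to $\Pi_1^*A\Pi_1\leq \mC=\tilde{A}_1$.

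With both hypotheses in place, Lemma \ref{lemma::add_to_mult} gives
\begin{align*}
c_1^{-1}\SsMHM \lesssim \SsM \leq \SsMHM, \qquad c_1^{-1}\SsDMHM \lesssim \SsDM \leq \SsDMHM,
\end{align*}
and substituting $c_1\lesssim \gamma\cdot(\log\pdg)^3$ from Theorems \ref{theorem::aux_pc} and \ref{theorem::aux_pc_schur} yields exactly \eqref{eq::aux_pc_mult} and \eqref{eq::aux_pc_mult_schur}. Since each step is a direct application of already-established results, I expect no real obstacle here; the only subtlety worth highlighting is that the inequality $\Abm\leq\mC$ must be consistent with the sharp identity $\mE^T\SsM\mE=\Abm$ so as not to degrade the additive bound, but this is exactly what is assumed.
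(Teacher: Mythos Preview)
Your proposal is correct and follows essentially the same approach as the paper: you apply Lemma~\ref{lemma::add_to_mult} to each additive ASP, verify the Block-Gauss-Seidel smoother hypothesis directly, and check $\Pi_1^*A\Pi_1\leq\tilde{A}_1$ via Corollary~\ref{coro::embed} (giving the identity $\mE^T\SsM\mE=\Abm$) and the sharp upper bound in Corollary~\ref{coro::embed_schur}, together with the assumption $\Abm\leq\mC$. The paper's proof invokes exactly these ingredients in the same order.
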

\begin{proof}
  The former result \eqref{eq::aux_pc_mult} follows from Theorem \ref{theorem::aux_pc}
  and Lemma \ref{lemma::add_to_mult}, where
  condition \eqref{eq::add_to_mult_corrections} is fulfilled
  due to $\Abm\leq\mC$ and Corollary \ref{coro::embed}.
  The latter one \eqref{eq::aux_pc_mult_schur} follows along the same lines
  with Theorem \ref{theorem::aux_pc_schur}
  and the strict upper bound in \eqref{eq::embed_schur} for Lemma \ref{lemma::add_to_mult}.
\end{proof}

\begin{remark}
  Although we have only experimental evidence that the constant $\gamma$ in
  Theorem \ref{theorem::aux_pc} and Theorem \ref{theorem::aux_pc_schur}
  is benign, the proofs of these theorems show that
  in the $\|\cdot\|_{\eps,h}$ and $\|\cdot\|_{\eps,h,\partial}$ norm
  they hold independently of $\gamma$.
  That is, we have results for ASPs for HDG methods with optimal stabilization
  that are explicit and robust in $\pdg$.
\end{remark}

% \begin{corollary}\label{coro::aux_pc_mult}
%   Let $\SsMHM$ be the multiplicative version of $\SsMH$,
%   with $\mM$ replaced by Block-Gauss-Seidel sweeps
%   and let $\Abm \leq \mC$.
%   Then there holds
%   \begin{align}
%     \gamma^{-1}\cdot(\log{\pdg})^{-3}\SsMHM \lesssim \SsM \leq \SsMHM. \label{eq::aux_pc_mult}
%   \end{align}
% \end{corollary}
% \begin{proof}
%   Follows from Theorem \ref{theorem::aux_pc}
%   and Lemma \ref{lemma::add_to_mult}, where
%   condition \eqref{eq::add_to_mult_corrections} is fulfilled
%   due to $\Abm\leq\mC$ and Corollary \ref{coro::embed}.
% \end{proof}

% \begin{corollary}\label{coro::aux_pc_mult_schur}
%   Let $\SsDMHM$ be the multiplicative version of $\SsDMH$,
%   with $\mDM$ replaced by Block-Gauss-Seidel sweeps
%   and let $\Abm \leq \mC$.
%   Then there holds
%   \begin{align}
%     \gamma^{-1}\cdot(\log{\pdg})^{-3}\SsDMHM \lesssim \SsDM \leq \SsDMHM. \label{eq::aux_pc_mult_schur}
%   \end{align}
% \end{corollary}
% \begin{proof}
%   Follows along the same lines as Corollary \ref{coro::aux_pc_mult}
%   using the strict upper bound in \eqref{eq::embed_schur} for Lemma \ref{lemma::add_to_mult}.
% \end{proof}

\subsection{Non-conformity in Boundary Conditions}%\label{para::outflow}
% \paragraph{Non-conforminty in Boundary Conditions}%\label{para::outflow}

We now return to the case of $\GammaNT\neq\emptyset$.
Instead of enforcing zero tangential Dirichlet conditions on $\GammaNT$
in $\bar{V}_h$, it suffices to add a tangential penalty to $\bar{A}$
and for $E$ to zero out $\hat{V}_h$ degrees of freedom on $\GammaNT$.

\begin{lemma}\label{lemma::outflow}
  For some $C>0$, let $\bar{A}$ be defined by the modified bilinear form
  \begin{align*}
    \bar{a}(\bar{u}_h, \bar{v}_h) :=
    \int_{\Omega}\nu\eps(\bar{u}_h):\eps(\bar{v}_h)\dx + \sum_{F\subseteq\GammaNT} \int_F  \frac{\nu C\pdg^2}{h} (\bar{u}_h)_t(\bar{v}_h)_t\ds.
  \end{align*}
  and $\pi_0:\VUUH\rightarrow\VUUH$ be the operator that zeros out
  $\hat{V}_h$ degrees of freedom on $\GammaNT$.
  Then, for $C$ large enough there holds
  \begin{align*}
    \|\bar{u}_h\|_{\bar{A}}^2 &\lesssim \|\pi_0E\bar{u}_h\|_{\Ss}^2 \leq \|\bar{u}_h\|_{\bar{A}}^2, \quad \textrm{and} \quad
    \|\bar{u}_h\|_{\bar{A}}^2 \lesssim \|\mathcal{H}\pi_0E\bar{u}_h\|_{\SsD}^2 \leq \|\bar{u}_h\|_{\bar{A}}^2.
  \end{align*}
  These estimates are robust in $\pdg$.
\end{lemma}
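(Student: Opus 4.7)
The plan is to reduce both equivalences to statements in the HDG-style norm $\nu\|\cdot\|_{\eps,h}^2$ via Lemma~\ref{lemma::SPD}, and to handle the $\GammaNT$ penalty directly by exploiting that $(\bar{u}_h)_t\in\Poly^1(F)$ on every facet $F\subseteq\GammaNT$. First I would compute $\|\pi_0 E\bar{u}_h\|_{\eps,h}^2$ element by element. With $(u_h,\hat{u}_h):=\pi_0 E\bar{u}_h$, we have $u_h=\bar{u}_h$, and on interior and $\GammaN$-facets $\hat{u}_h=(\bar{u}_h)_t$ makes the tangential jump $(u_h-\hat{u}_h)_t$ vanish; on $F\subseteq\GammaNT$, $\hat{u}_h=0$ and the jump reduces to the linear polynomial $(\bar{u}_h)_t$, on which $\Pi^{\pdg-1}$ acts as the identity for $\pdg\geq 2$. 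The equivalence \eqref{eq::jFjumpN} then collapses to $\|(\bar{u}_h)_t\|_{j,F}^2\sim \pdg^2 h^{-1}\|(\bar{u}_h)_t\|_F^2$ with $\pdg$-independent constants, so choosing $C$ larger than the hidden upper constant yields $\nu\|\pi_0 E\bar{u}_h\|_{\eps,h}^2\sim\|\bar{u}_h\|_{\bar{A}}^2$. Lemma~\ref{lemma::SPD} then transfers this to $\|\cdot\|_{\Ss}$, giving the first pair of bounds.

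The upper bound of the second pair follows immediately from the $\Ss$-energy minimization of $\mathcal{H}$: $\|\mathcal{H}\pi_0 E\bar{u}_h\|_{\SsD}^2=\|\mathcal{H}\pi_0 E\bar{u}_h\|_{\Ss}^2\leq\|\pi_0 E\bar{u}_h\|_{\Ss}^2\leq\|\bar{u}_h\|_{\bar{A}}^2$ via the first pair. For the lower bound I would mimic the proof of Corollary~\ref{coro::embed_schur}: set $(u_h,\hat{u}_h):=\mathcal{H}_{\eps}\pi_0 E\bar{u}_h$. Since $\pi_0$ only modifies the $\hat{V}_h$ component and $\IV$ annihilates bubbles by \eqref{eq::I_bubble}, we get $\IV u_h=\bar{u}_h$. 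Lemma~\ref{lemma::nc_to_c_nokorn_withbc} then delivers $\nu\|\eps(\bar{u}_h)\|_0^2\lesssim \nu\|(u_h,\hat{u}_h)\|_{\eps,h}^2$, with the interior $\Pi^R_F$-jump terms absorbed into the HDG facet contributions as in Corollary~\ref{coro::embed_schur}. For the penalty on $F\subseteq\GammaNT$, I would write $u_h=\bar{u}_h+b_h$ with $b_h\in V_h^{\circ}$, note $\Pi^1_F (u_h)_t=(\bar{u}_h)_t+\Pi^1_F(b_h)_t$, and combine the triangle inequality with the linear-polynomial bound $\pdg^2 h^{-1}\|\Pi^1_F v\|_F^2\lesssim \|\Pi^{\pdg-1}v\|_{j,F}^2$ to reduce matters to estimating the HDG facet term $\|\Pi^{\pdg-1}(u_h)_t\|_{j,F}^2$ at $F$ and the bubble trace $\|\Pi^1_F(b_h)_t\|_F^2$. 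The latter I would control by a polynomial trace inverse against the volume contribution $\|\eps(b_h)\|_T^2$ and the HDG facet terms $\|\Pi^{\pdg-1}(b_h)_t\|_{j,F'}^2$ on the remaining facets $F'\in\facetsT\setminus\{F\}$, all of which are part of $\|(u_h,\hat{u}_h)\|_{\eps,h}^2$. Summing over $F\subseteq\GammaNT$ gives $\|\bar{u}_h\|_{\bar{A}}^2\lesssim\nu\|\pi_0 E\bar{u}_h\|_{\eps,h,\partial}^2$, and the lower bound in \eqref{eq::schur_schur_SPD} closes the loop (allowing the constant to depend on $\gamma$).

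The hard part will be the $\pdg$-robust trace-inverse bound for the bubble piece $\Pi^1_F(b_h)_t$ on a $\GammaNT$-facet: I need that imposing this low-order tangential trace costs at least a $\pdg$-independent multiple of the combined volume and interior-facet HDG energy of $b_h$. Equivalently, the partial cancellation of the linear trace $(\bar{u}_h)_t$ by a $V_h^{\circ}$-bubble cannot reduce the HDG energy by more than a constant factor, so that the energy minimizer $\mathcal{H}_{\eps}$ still retains the full $\GammaNT$ penalty contribution. This is where care with the $p$-version polynomial extension estimates has to be taken.
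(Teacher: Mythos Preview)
Your treatment of the first pair of inequalities and of the upper bound $\|\mathcal{H}\pi_0E\bar u_h\|_{\SsD}^2\le\|\bar u_h\|_{\bar A}^2$ is exactly the paper's argument: compute $\|\pi_0E\bar u_h\|_{\eps,h}^2$ directly, exploit that $(\bar u_h)_t\in\Poly^1(F)$ so that only the lowest modes in \eqref{eq::jFjumpN} survive and $\|(\bar u_h)_t\|_{j,F}^2\sim\pdg^2h^{-1}\|(\bar u_h)_t\|_F^2$ with $\pdg$-independent constants, transfer to $\|\cdot\|_{\Ss}$ via Lemma~\ref{lemma::SPD}, and then use the energy minimality of $\mathcal{H}$ for the Schur upper bound.

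For the lower $\SsD$-bound the paper is much terser than you --- it simply says the estimate follows ``with energy minimization as in the proof of Corollary~\ref{coro::embed_schur}''. That corollary, however, only recovers $\nu\|\eps(\bar u_h)\|_0^2$; it never touches a $\GammaNT$-penalty, so you are right that something extra is needed once the harmonic extension has perturbed $u_h$ by a bubble $b_h\in V_h^{\circ}$. Your route via $\Pi^1_F$ is a sensible way to isolate this, and the step you flag as the ``hard part'' --- ruling out that a normal bubble can cancel the low-order tangential trace $(\bar u_h)_t$ on a $\GammaNT$-facet at $\pdg$-uniformly small $\|\cdot\|_{\eps,h}$-cost --- is genuinely where the difficulty sits. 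Observe that Lemma~\ref{lemma::nc_to_c_nokorn_withbc} already yields $h^{-2}\|b_h\|_T^2\lesssim\|(u_h,\hat u_h)\|_{\eps,h}^2$ (since $b_h=u_h-\IV u_h$), and hence via a standard trace inequality control of $h^{-1}\|\Pi^1_F(b_h)_t\|_F^2$; it is only the additional factor $\pdg^2$ that forces the $p$-version extension machinery you allude to, and the paper does not spell this out either. In short: your plan matches the paper where the paper is explicit, and where the paper is silent you have correctly located the one place that needs genuine further work.
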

\begin{proof}
  With the upper bound in \eqref{eq::schur_SPD} and \eqref{eq::jFjumpN} there holds
  \begin{align*}
    \| \pi_0E\bar{u}_h \|_{\Ss}^2
    &\leq \nu\Big(\|\eps(\bar{u}_h)\|_0^2 + \sum_{F\in\facetsNT} \| \Pi^{\pdg-1}(\bar{u}_h)_t\|_{j,F}^2 \Big)
    \lesssim \nu \Big(\|\eps(\bar{u}_h)\|_0^2 + \sum_{F\in\facetsNT} \frac{\pdg^2}{h} \| \Pi^{\pdg-1}(\bar{u}_h)_t\|_F^2 \Big),
  \end{align*}
  that is for large enough $C$ we have $\|\pi_0E\bar{u}_h\|_{\Ss}^2 \leq \|\bar{u}_h\|_{\bar{A}}^2$.
  The lower bound $ \|\bar{u}_h\|_{\bar{A}}^2 \lesssim \|\pi_0E\bar{u}_h\|_{\Ss}^2$
  similarly follows from the lower bound in \eqref{eq::schur_SPD} and the fact that,
  as $\bar{u}_h\in\Poly^1(\mesh)$ is of low order,
  the high order terms in \eqref{eq::jFjumpN} vanish and there holds
  \begin{align*}
     \frac{\pdg^2}{h} \| \Pi^{\pdg-1}(\bar{u}_h)_t\|_F^2 \sim \| \Pi^{\pdg-1}(\bar{u}_h)_t\|_{j,F}^2
  \end{align*}
  with a $\pdg$-robust constant.
  The estimates for the $\|\cdot\|_{\SsD}$-norm follow from the ones for the $\|\cdot\|_{\Ss}$-norm
  with energy minimization as in the proof of Corollary \ref{coro::embed_schur}.
\end{proof}

Modifying $\bar{A}$ and the embedding operators like this on shows the
proofs of Section \ref{sec::aux} also for the case
$\GammaNT\neq\emptyset$.

\section{The lowest order case} \label{sec::lorder}
The MCS method of Section \ref{sec::mcsstokes} is, as already mentioned there,
not stable in the lowest order case $\pdg=1$.
Stability of the method is recovered when a simplified stress tensor $\sigma = -\nu\nabla(u)$
is used in \eqref{eq::stokes_A}, but we are interested in treating the full
symmetric stress tensor $\sigma = -\nu\eps(u)$.
For that, the five coupling degrees of freedom per facet we have with $\pdg=1$,
three in $V_h\subseteq\BDM^1$ and two enforced by $\hat{V}_h$,
are too few to capture the six rigid body modes.

In \cite{gopalakrishnan2021minimal}, this was remedied by using a vector-valued $W_h$ instead of the $\kk$-valued
one here, which just means that all occurrences of $\omega_h$ have to be replaced by $\kappa(\omega_h)$ everywhere,
and taking it as a subset of $H(\div)$,
%In \cite{gopalakrishnan2021minimal}, this was remedied by taking the now vector-valued $W_h$ as a subset of $H(\div)$,
\begin{align*} %\label{def::lo_fespaces}
  W_h &:= \{ \omega_h \in H_{0,D}(\div, \Omega): (\omega_h)_{|T} \in \Poly^0(T, \rr^3) + x  \Poly^0(T,\rr)~\forall T \in \mesh \},
\end{align*}
providing the missing coupling degree of freedom per facet.
Motivated by the fact that the divergence of $\omega_h = \curl(u)\in H(\div,\Omega)$ vanishes for the true solution $u\in H^1(\Omega,\rr^d)$, 
a consistent stabilizing term $(\div(\omega_h),\div(\eta_h))_0$ was added to the bilinear form.
We only briefly sketch how to adapt the
preconditioners and their analysis developed here.
Since $W_h\subseteq H(\div,\Omega)$ has a coupling degree of freedom per facet,
$\omega_h$ remains after static condensation and
$\Ss$ is a system for $(u_h,\hat{u}_h,\omega_h) \in \LOVUUH := V_h\times\hat{V}_h\times W_h$.
The norm in $\LOVUUH$ is
\begin{align*}
  \|(u_h,\hat{u}_h,\omega_h)\|_{\eps,h,\text{lo}}^2 :=
  \sum_{T\in\mesh}\Big( \|\eps(u_h)\|_T^2 + \sum_{F\in\facetsT} &h^{-1}\|\Pi_F^0(u_h - \hat{u})_h\|_{F}^2 
  + h\|(\curl(u_h) - \omega_h)_n\|_{F}^2 \Big),
\end{align*}
this is justified by the discrete Korn inequality
\begin{align*}
  \sum_{T\in\mesh} \| \nabla u_h \|_T^2 \lesssim
  \sum_{T\in\mesh} \|\eps(u_h)\|_T^2 + 
       \sum_{F\in\facets}h^{-1}\big\|\Pi^0_F\jump{u_h}_t\big\|_F^2
      + h\big\|\jump{n \cdot \curl(u_h)}\big\|_F^2
\end{align*}
introduced in \cite[Lemma 3.1]{gopalakrishnan2021minimal}.
We only need to adapt the "embedding" operator $E$ which now has a $W_h$
component and projects into the $\hat{V}_h$
component as for $u_h\in\Vbar_h$ the piecewise $\Poly^1$ tangential trace $(\bar{u}_h)_t\notin \hat{V}_h$,
\begin{align*} %\label{eq::def_lo_E}
  E : \Vbar_h\rightarrow \LOVUUH : \bar{u}_h\mapsto (\bar{u}_h, \Pi_F^0(\bar{u}_h)_t, \curl(\bar{u}_h)).
\end{align*}
The analysis also needs to be only slightly modified using the equivalence
\begin{align*}
  \sum_{T\in\mesh} \|\eps(u_h)\|_T^2
       &+ \sum_{F\in\facets}h^{-1}\big\|\Pi^R_F\jump{u_h}_t\big\|_F^2
  \sim
  \sum_{T\in\mesh} \|\eps(u_h)\|_T^2 + \sum_{F\in\facets}
  % \begin{aligned}
      h^{-1}\big\|\Pi^0_F\jump{u_h}_t\big\|_F^2 
      + h\big\|\Pi^0_F\jump{n \cdot \curl(u_h)}\big\|_F^2
  % \end{aligned}
\end{align*}
introduced together with the Korn inequality in \cite[Lemma 3.1]{gopalakrishnan2021minimal}.

\section{Numerical results} \label{sec::numerical}

We now present numerical results that were achieved using the Netgen/NGSolve
meshing and Finite Element software, \cite{netgen, cpp_ngs}, and
the Algebraic Multigrid extension library NgsAMG, \cite{Kogler2021},
available from \cite{ngs_webpage, ngs_amg_github}.
The computations were performed on the Vienna Scientific Cluster (VSC4).

We considered two problems,
the first of which is a standard benchmark problem from literature where we investigate
the relative performance of different ASP variations
and demonstrate robustness in the polynomial degree.
The second problem is a flow around an airplane model and is meant to
demonstrate the effectiveness of the method even in less academic situations.

For both cases, the viscosity is fixed to $\nu = 10^{-3}$,
the preconditioner in the conforming auxiliary space $\hat{V}_h$
was given by a single Algebraic Multigrid V-cycle and we used preconditioned GMRES
with a relative tolerance of $10^{-6}$ to solve the saddle point problem.
Instead of the difficult to parallelize Block-Gauss-Seidel smoothers in $\SsMHM$ and $\SsDMHM$,
we use block versions of the scalable semi-multiplicative $\ell_1$-smoothers
from \cite{UPSM}.
We show weak scaling results and therefore aim to keep
the number of elements per core constant, however
are only able to ensure this approximately
because of the unstructured tetrahedral meshes we use,

The obtained results, listed in Tables \ref{tab::ST_smoothing} - \ref{tab::plane}
will be discussed in detail below.
For every computation we list the number of elements in the mesh $|\mesh|$
and the number of cores \#P.
With the $\Sigma_h$ dofs freedom condensed out of the system,
the relevant number of dofs is that of $\VUUH\times Q_h$
which we list as \#D.
We give the number of iterations of GMRES needed as \#IT
%as well as an approximate ondition number of the $A$-block $\kappa_A$
and the total time to solution $t_{\text{tot}}$ in seconds
as well as the separate times for setting up $t_{\text{sup}}$
and solving $t_{\text{sol}}$ the systems, all excluding the time for loading the mesh.
%These time values are accompanied by the efficiency \text{eff},
%i.e. the value divided by that for the smallest computation in the series.

\subsection{Flow around a cylinder}
This first series of computations concerns the flow around a cylinder as in \cite{Schaefer1996}.
The cuboid-shaped channel $\Omega$ with cylindrical obstacle $\Omega_{c}$,
$\Omega := (0,2.5) \times (0,0.41) \times (0,0.41) \setminus \overline \Omega_{c}$
is depicted on the left in Figure \ref{fig::tunnelandplane}.
% The cuboid-shaped channel with cylindrical obstacle
% $\Omega := (0,2.5) \times (0,0.41) \times (0,0.41) \setminus \overline \Omega_{c}$
% with
% $\Omega_c := \{ (x,y,z) \in \rr^3: {(x-0.5)^2 + (y-0.2)^2} \le 0.5^2 \}$
% is depicted on the left in Figure \ref{fig::tunnelandplane}.
The boundary parts are $\GammaN=\emptyset$, $\GammaNT=\{(2.5,y,z)\in\partial\Omega\}$
with $\GammaD = \Gamma_{\textrm{in}}\cup\Gamma_{\textrm{wall}}$ split into
inflow boundary
$\Gamma_{\text{in}} := \{ (0,y,z) \in \partial\Omega\}$,
where we impose a parabolic velocity inflow
% where we impose $u=(16y*(0.41-y)z(0.41-z)/0.41^4,0,0)$,
and wall boundary $\Gamma_{\textrm{wall}}$ with homogenous Dirichlet conditions.

\subsubsection{Full versus condense system}
We first discuss whether preconditioning $\SsM$ via $\SsDM$ as
described in Section \ref{ssec::schur_schur} is purely convenient for
theory or also advantageous in practice. For that, we compare the
multiplicative ASPs over a range of problem sizes and fixed polynomial
degree $\pdg=2$. As can be clearly seen in Table
\ref{tab::ST_full_vs_cond}, preconditioning via the condense system
leads to considerably better performance and is the approach we take
from here on out.

\subsubsection{Additive versus multiplicative ASP}
The second choice is between additive and multiplicative ASPs,
we again fix the polynomial degree to $\pdg=2$ for the comparison
in Table \ref{tab::ST_smoothing}.
From the results it is once again clear that
the multiplicative preconditioner is superior and our method of choice going forward.

% The second choice is between additive and multiplicative
% ASPs and also the question of how many smoothing steps
% to use for the multiplicative one.
% We again fix the polynomial degree to $\pdg=2$
% and compare the additive version with the multiplicative version with one and two smoothing steps.

% From the results in Table \ref{tab::ST_smoothing} it is clear that
% the multiplicative version has superior performance and is to be preferred.
% The difference between one and two smoothing steps is less pronounced,
% while one smoothing step seems to be faster overall, two do need fewer total
% iterations in many cases.

\subsubsection{High order robustness}
Finally, we demonstrate robustness in the polynomial degree $\pdg$ with
results for $\pdg\in\{1,2,4\}$.
Our choice of preconditioner, informed by previous results, is the multiplicative
ASP for the condense system, this time with two smoothing steps.
Due to considerably increased memory requirements, different meshes were
used for $\pdg=4$ than for $\pdg=1,2$.
% In an attempt to demonstrate scalability of the implementation, the results
% in Table \ref{tab::ST_ws} cover also larger cases up to $\mathcal{O}(10^9)$
% degrees of freeedom on up to $\mathcal{O}(10^4)$ cores. %TODO: letzter satz??

\begin{table}
  %\footnotesize
  \scriptsize
  \centering 
  \begin{tabular}{
    r % nels
    c % dofs
    c|| % nP
    c| % nits
    c %!{\!(\!\!\!}c!{\!\!\!)} % t_tot (eff)
    c %!{\!(\!\!\!}c!{\!\!\!)} % t_sol (eff)
    c|| %!{\!(\!\!\!}c!{\!\!\!)} % t_sup (eff)
    c| % nits
    c %!{\!(\!\!\!}c!{\!\!\!)} % t_tot (eff)
    c %!{\!(\!\!\!}c!{\!\!\!)} % t_sol (eff)
    c %!{\!(\!\!\!}c!{\!\!\!)} % t_sup (eff)
    }
    \multicolumn{3}{c||}{} &
    \multicolumn{4}{c||}{\text{Full system}} & %run 12: order2 + fullsys
    \multicolumn{4}{c}{\text{Condense system}} \\ %run 12: order2 + fullsys
    \midrule
    \midrule
    $|\mesh|$
    &\#D
    &\#P
    &\#IT
    &$t_{\text{tot}}$% &\text{eff}
    &$t_{\text{sup}}$% &\text{eff}
    &$t_{\text{sol}}$% &\text{eff} \\
    &\#IT
    &$t_{\text{tot}}$% &\text{eff}
    &$t_{\text{sup}}$% &\text{eff}
    &$t_{\text{sol}}$\\% &\text{eff} \\
    \midrule
    %\midrule
    \num{55248}&\num{1918320}&7&166&67.2&9.0&58.2&76&29.3&9.5&19.8\\
    \num{181351}&\num{6267214}&19&119&58.2&10.2&48.0&63&32.4&11.3&21.1\\
    \num{310272}&\num{10644864}&36&235&163.1&9.1&154.0&92&59.1&10.2&48.9\\
    \num{792940}&\num{27242308}&81&128&110.6&11.8&98.8&65&53.0&12.3&40.7\\
    \num{1450808}&\num{49732592}&166&159&134.9&10.6&124.3&73&54.5&10.8&43.7\\
    \num{3535872}&\num{120857856}&408&172&171.1&11.3&159.8&78&65.3&11.8&53.5\\
    \num{6343520}&\num{216809072}&720&164&141.7&11.1&130.6&74&55.2&11.9&43.3\\
    \num{11606464}&\num{396240256}&1333&164&169.2&12.0&157.2&75&68.0&12.9&55.1\\
    \num{23176704}&\num{791367168}&2667&193&430.2&24.0&406.2&81&78.6&14.8&63.8\\
  \end{tabular}
  \caption{Comparison of multiplicative ASPs for the full system $\SsM$ and the condense system $\SsDM$
    for the channel problem with $\pdg=2$.}
  \label{tab::ST_full_vs_cond}
\end{table}

\begin{table}
  \footnotesize
  % \scriptsize
  \centering 
  \begin{tabular}{
    r % nels
    c % dofs
    c|| % nP
    c| % nits
    c %!{\!(\!\!\!}c!{\!\!\!)} % t_tot (eff)
    c %!{\!(\!\!\!}c!{\!\!\!)} % t_sol (eff)
    c|| %!{\!(\!\!\!}c!{\!\!\!)} % t_sup (eff)
    c| % nits
    c %!{\!(\!\!\!}c!{\!\!\!)} % t_tot (eff)
    c %!{\!(\!\!\!}c!{\!\!\!)} % t_sol (eff)
    c %!{\!(\!\!\!}c!{\!\!\!)} % t_sup (eff)
    }
    \multicolumn{3}{c||}{} &
    \multicolumn{4}{c||}{\text{Additive}} & %run 12: order2 + fullsys
    \multicolumn{4}{c}{\text{Multiplicative}} \\ %run 12: order2 + fullsys
    \midrule
    \midrule
    $|\mesh|$
    &\#D
    &\#P
    &\#IT
    &$t_{\text{tot}}$% &\text{eff}
    &$t_{\text{sup}}$% &\text{eff}
    &$t_{\text{sol}}$% &\text{eff} \\
    &\#IT
    &$t_{\text{tot}}$% &\text{eff}
    &$t_{\text{sup}}$% &\text{eff}
    &$t_{\text{sol}}$\\% &\text{eff} \\
    \midrule
    \num{55248}&\num{1918320}&5&191&60.9&13.9&47.0&75&46.0&14.2&31.8\\
    \num{211920}&\num{7314384}&17&191&75.8&12.8&63.0&73&45.6&14.3&31.3\\
    \num{441984}&\num{15187008}&35&206&154.4&13.4&141.0&77&65.7&14.6&51.1\\
    %\num{792940}&\num{27242308}&54&149&98.1&15.0&83.0&66&58.5&16.2&42.2\\
    \num{1450808}&\num{49732592}&111&169&132.8&14.0&118.7&73&73.1&15.2&57.9\\
    %\num{2482176}&\num{84776448}&191&252&326.4&15.5&310.8&90&122.1&16.8&105.4\\
   % \num{3535872}&\num{120857856}&272&210&232.7&15.7&217.1&78&107.0&17.2&89.9\\
    \num{6343520}&\num{216809072}&480&179&176.0&15.7&160.3&74&90.9&17.0&73.9\\
    \num{13562880}&\num{462883584}&1040&209&261.5&16.7&244.8&78&120.0&18.0&101.9\\
    \num{35229696}&\num{1201324032}&2698&230&400.2&18.0&382.3&88&159.4&20.3&139.13\\
    \num{50748160}&\num{1729955008}&3876&202&301.4&18.1&283.3&84&151.9&19.4&132.5\\
  \end{tabular}
  \caption{Comparison of additive and multiplicative ASPs for $\SsDM$
    for the channel problem with $\pdg=2$.}
  \label{tab::ST_smoothing}
\end{table}

\begin{table}
  \footnotesize
  % \scriptsize
  \centering 
  \begin{tabular}{
    r % nels
    c|| % nP
    c| % dofs
    c| % nits
    c %!{\!(\!\!\!}c!{\!\!\!)} % t_tot (eff)
    c %!{\!(\!\!\!}c!{\!\!\!)} % t_sol (eff)
    c|| %!{\!(\!\!\!}c!{\!\!\!)} % t_sup (eff)
    c| % dofs
    c| % nits
    c %!{\!(\!\!\!}c!{\!\!\!)} % t_tot (eff)
    c %!{\!(\!\!\!}c!{\!\!\!)} % t_sol (eff)
    c %!{\!(\!\!\!}c!{\!\!\!)} % t_sup (eff)
    }
    \multicolumn{2}{c||}{} &
    \multicolumn{5}{c||}{$k=1$} & %run 12: order2 + fullsys
    \multicolumn{5}{c}{$k=2$}\\ %run 12: order2 + fullsys
    \midrule
    \midrule
    $|\mesh|$
    &\#P
    &\#D
    &\#IT
    &$t_{\text{tot}}$% &\text{eff}
    &$t_{\text{sup}}$% &\text{eff}
    &$t_{\text{sol}}$% &\text{eff} \\
    &\#D
    &\#IT
    &$t_{\text{tot}}$% &\text{eff}
    &$t_{\text{sup}}$% &\text{eff}
    &$t_{\text{sol}}$\\% &\text{eff} \\
    \midrule
\num{8601}&1&\num{118677}&86&11.6&4.2&7.4&\num{306162}&49&23.9&7.9&16.0\\
%\num{45267}&6&\num{614715}&86&15.5&4.9&10.6&\num{1591566}&50&28.5&9.2&19.3\\
%\num{181197}&21&\num{2418495}&85&17.0&5.1&11.9&\num{6286566}&54&33.9&9.5&24.3\\
\num{317028}&36&\num{4209312}&82&21.6&5.8&15.8&\num{10954848}&53&42.8&10.2&32.6\\
\num{761759}&85&\num{10035161}&79&23.7&6.6&17.2&\num{26164394}&52&54.2&12.0&42.2\\
%\num{1351609}&151&\num{17839495}&79&23.2&6.7&16.5&\num{46491862}&55&50.1&11.5&38.6\\
\num{2019989}&225&\num{26575415}&81&27.4&6.9&20.5&\num{69310742}&57&61.2&12.1&49.1\\
\num{6406377}&712&\num{83843277}&82&30.4&7.3&23.2&\num{218937570}&58&67.9&12.8&55.2\\
%\num{16306429}&1812&\num{213249769}&84&34.1&8.3&25.8&\num{556950970}&61&73.7&13.6&60.1\\
\num{21122473}&2347&\num{276148897}&85&32.6&8.2&24.3&\num{721277578}&62&74.8&14.7&60.1\\
\num{46480267}&5165&\num{606224245}&97&44.8&12.0&32.8&\num{1584290626}&75&92.1&15.0&77.1\\
\num{64511647}&7168&\num{841627447}&98&43.1&10.2&32.9&\num{2199348070}&75&94.6&16.1&78.5\\
\num{96966427}&10775&\num{1264069699}&106&66.0&30.1&35.9\\
\midrule\midrule
    \multicolumn{2}{c||}{} &
    \multicolumn{5}{c||}{$k=4$} & %run 12: order2 + fullsys
    \multicolumn{4}{c}{} \\ %run 12: order2 + fullsys
    \midrule
\num{8601}&11&\num{1201175}&63&40.8&19.4&21.4\\
%\num{26490}&34&\num{3655705}&62&45.5&21.2&24.3\\
\num{45267}&57&\num{6264135}&65&48.8&22.9&25.9\\
\num{181197}&227&\num{24828710}&65&63.7&28.2&35.5\\
\num{317028}&397&\num{43311810}&67&61.4&27.0&34.4\\
\num{761759}&953&\num{103609180}&65&65.7&27.4&38.3\\
\num{1650451}&2064&\num{224483535}&66&61.7&24.8&36.9\\
\num{6406377}&8008&\num{868129755}&64&68.7&27.7&41.0\\
  \end{tabular}
  \centering
  \caption{Multiplicative ASP for $\SsDM$ with two smoothing steps for the channel problem and
    varying polynomial order $\pdg$.}
  \label{tab::ST_ws}
\end{table}

\begin{table}
  \footnotesize
  % \centering
  \begin{tabular}{
    r % nels
    c % dofs
    c||% nP
    c|% nits
    %c| % kappa_A
    %c| % kappa_S
    c %!{\!(\!\!\!}c!{\!\!\!)} % t_tot (eff)
    c %!{\!(\!\!\!}c!{\!\!\!)} % t_sol (eff)
    c %!{\!(\!\!\!}c!{\!\!\!)} % t_sup (eff)
    }
    $|\mesh|$
    &\#D
    &\#P
    &\#IT
%    &$\kappa_S$
    &$t_{\text{tot}}$ %&\text{eff}
    &$t_{\text{sup}}$ %&\text{eff}
    &$t_{\text{sol}}$ \\%&\text{eff} \\
    \midrule\midrule
\num{918984}&\num{31754532}&63&61&68.4&24.2&44.3\\
\num{1441885}&\num{49739362}&99&41&55.8&25.1&30.7\\
\num{1939677}&\num{66786714}&132&44&67.4&26.5&40.8\\
\num{3741663}&\num{128314458}&253&41&65.7&27.4&38.3\\
\num{11535080}&\num{395053808}&876&50&72.3&25.1&47.2\\
\num{15517416}&\num{530942928}&1176&52&79.1&25.6&53.5\\
\num{29933304}&\num{1022124000}&2261&53&82.6&26.6&56.0\\
  \end{tabular}
  \caption{Results for the model airplane problem, $\pdg=2$,
    multiplicative ASP for $\SsDM$, two smoothing steps.}
  \label{tab::plane}
\end{table}

\begin{figure}[h]
  \begin{center}
    \begin{tikzpicture}
        \node[] (CG) at (0,0) {\includegraphics[width=0.5\textwidth, clip = true, trim = 0cm 0 0 0 ]{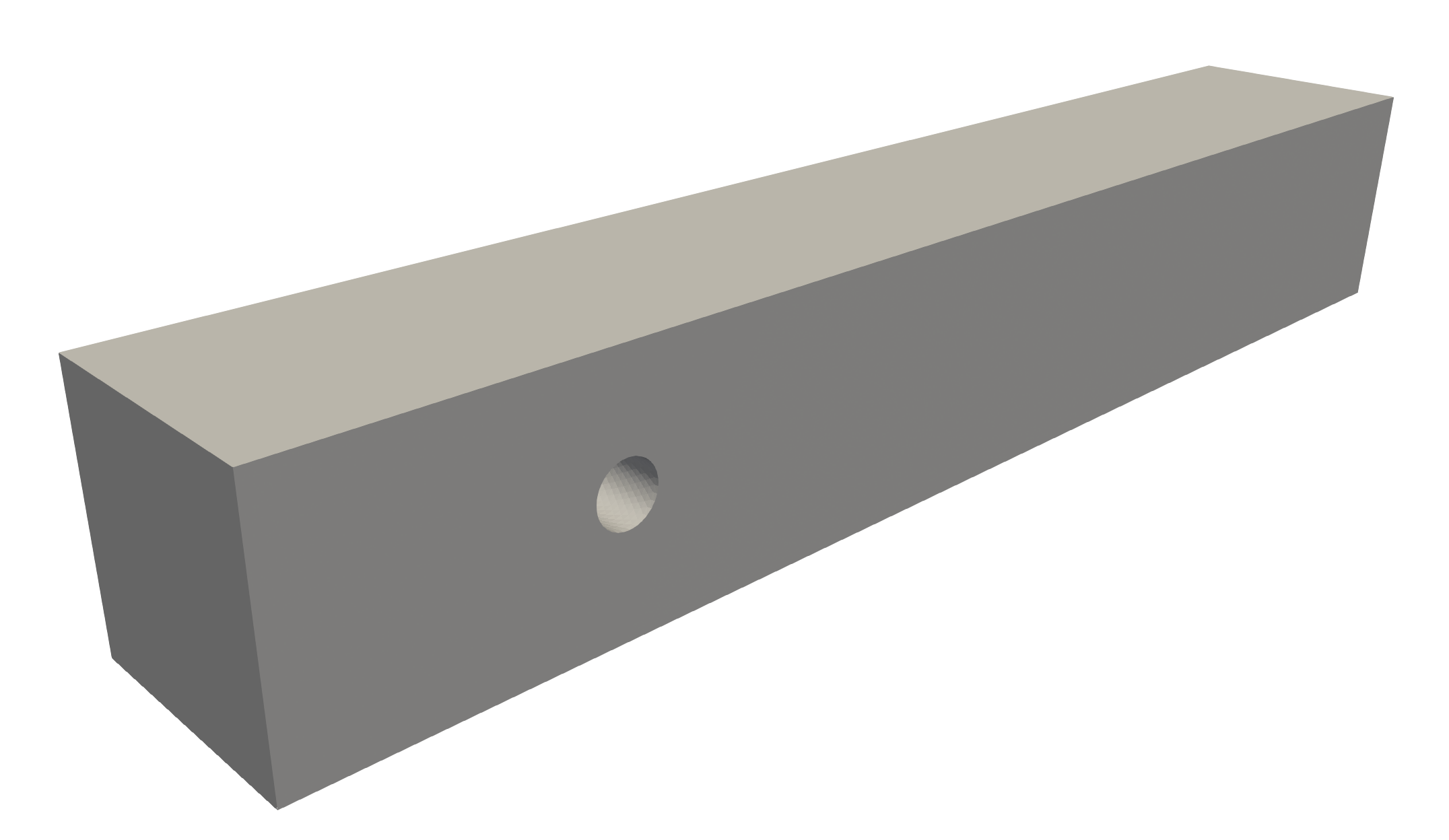}};
        \node[] (DG) at (8,0) {\includegraphics[width=0.5\textwidth, clip = true, trim = 0cm 0 0 0 ]{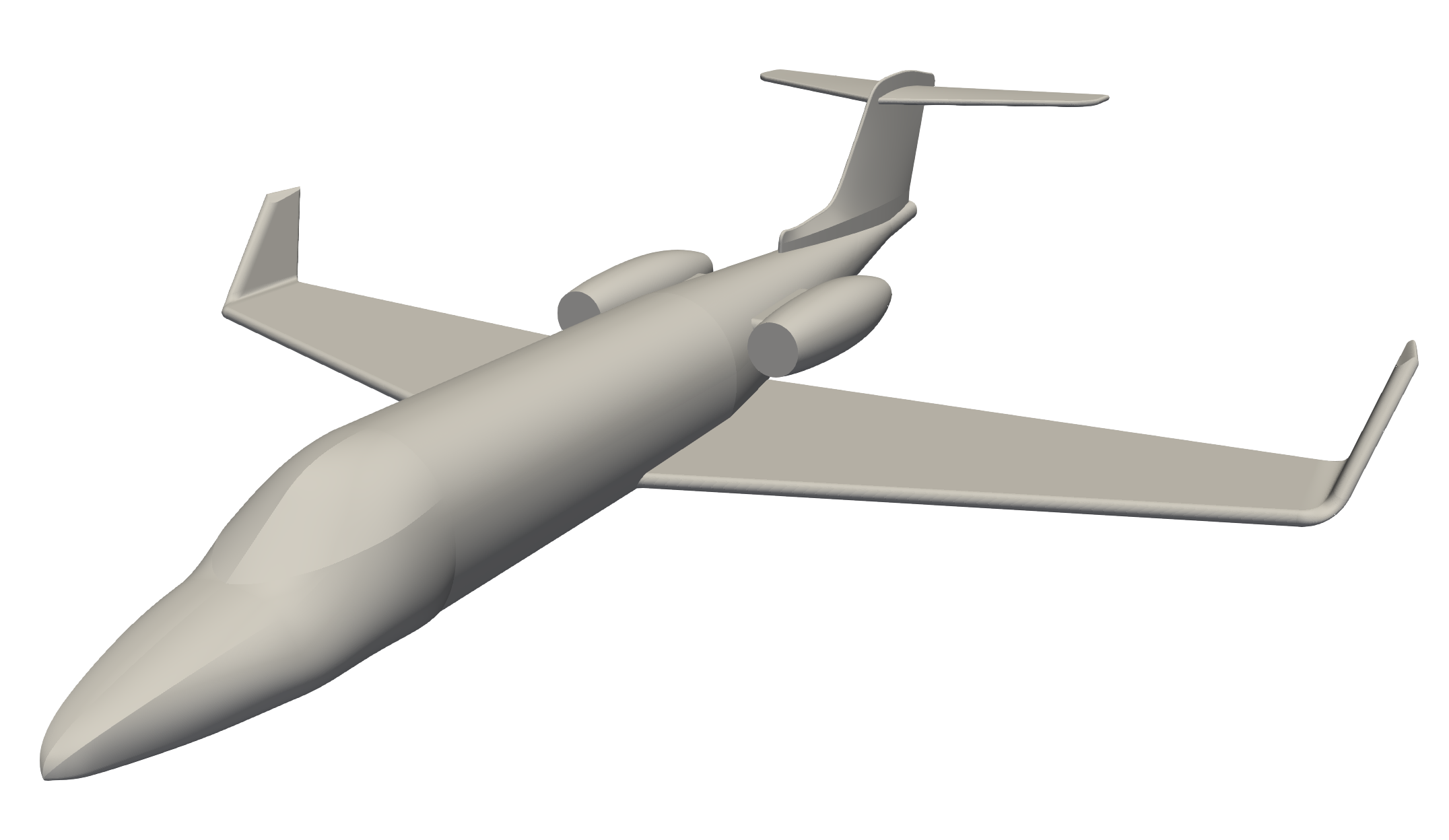}};
    \end{tikzpicture}
    \caption{Channel with cylindrical obstacle (left) and airplane model (right)}
      \label{fig::tunnelandplane}
\end{center}
\end{figure}

\subsection{Flow around an airplane model}
% For this problem, the computational domain $\Omega$
The computational domain $\Omega$ here
is the ``air'' in a cuboid-shaped box surrounding an airplane model $\Omega_p$ depicted in
Figure~\ref{fig::tunnelandplane},
we have $\Omega = (-8,10) \times (-7,7) \times (-3,4) \setminus \Omega_p$.
The airplane itself is contained in the bounding box $[-5.2, 5.3]\times[-4.9,4,9]\times[-0.5,1.6]$.
Boundary conditions, similar to the last case,
% are non-homogenous Dirichlet conditions on
are imposed velocity inflow on the side of the box in front of the plane $\Gamma_{\textrm{in}}$
% the side of the box in front of the plane, $\Gamma_{\textrm{in}}:=\{(-8,y,z)\in\partial\Omega\}$,
and homogenous Dirichlet conditions on $\partial\Omega_p$
with the rest of the boundary taken up by $\GammaNT=\partial\Omega\setminus(\Gamma_{\textrm{in}}\cup\partial\Omega_p)$.
The results can be found in Table \ref{tab::plane}.
% Again we choose $\nu = 10^{-3}$ and we set $u_{in} = (1,0,0)$.

\section{Conclusions}
\label{sec:conclusions}
In this work we introduced and analyzed a series of
%Auxiliary Space Preconditioners (ASPs)
auxiliary space preconditioners% (ASPs)
% for certain Mass Conserving mixed Stress (MCS)
for certain mass conserving mixed stress
discretizations of Stokes equations.
In the norm induced by these MCS methods,
the analysis is mostly explicit in the polynomial degree and even yields
completely explicit results in the norm induced by certain
%Hybrid Discontinuous Galerkin (HDG)
hybrid discontinuous Galerkin
methods that feature optimal stabilization.
%Numerical experiments demonstrate the robustness of the ASPs in the polynomial degree.
Numerical experiments demonstrate the robustness of the preconditioners in the polynomial degree.

% In this work we introduced and analyzed series of auxiliary
% space preconditioners for the MCS method of Stokes' equations.
% The analysis was performed mostly explicit in the order of discretization
% and numerical experiments confirm the robustness of the preconditioners

\section{Acknowledgments}
The authors have been partially funded by the Austrian Science Fund
(FWF) through the research program ``Taming complexity in partial
differential systems'' (F65) - project ``Automated discretization in
multiphysics'' (P10).

%------------------------------------------------------------------------------
\bibliography{literature}
\bibliographystyle{plain} % seems to be superfluous

%------------------------------------------------------------------------------

%------------------------------------------------------------------------------

\appendix

\section{Interpolation} \label{app::interp}
A standard result for $\IVf$, which, for example, follows from the Bramble Hilbert Lemma,
the discussion of jump terms arising from nodal averaging for $\Poly^1(\mesh, \rr^d)$ functions
in \cite[Section 3]{brenner_korn}, and a trace inequality is
\begin{align} \label{eq::nc_to_c_est_lin}
  \begin{aligned}
    \sum_{T\in\mesh} h^{-2} \|u - &\IVf u \|_T^2  + \|\nabla (u - \IVf u)\|_T^2 \\
    & \lesssim \sum_{T\in\mesh}\|\nabla u\|_T^2 + \sum_{F\in\facetsI}h^{-1}\|\Pi^0\jump{u}\|_F^2
  \end{aligned}
      \quad\forall u\in H^2(\mesh, \rr^d).
\end{align}
Following \cite[Section 3]{brenner_korn}, for $T\in\mesh$ we define $E_T : H^1(T, \rr^d)\rightarrow\RM(T)$ by
\begin{align*}
  \int_T (u - E_Tu)\cdot q\dx &= 0 \quad\forall q\in \Poly^0(T, \rr^d)\quad \textrm{and} \quad
  \int_T (\curl{u} - \curl{(E_Tu)})\cdot q\dx  = 0\quad\forall q\in \Poly^0(T, \rr^{d(d-1)/2}),
\end{align*}
that is $\curl{(E_T u)} = \Pi^0_T\curl{u}$ and $\Pi^0_T E_T u = \Pi^0_T u$, such that (also \cite[Section 3]{brenner_korn})
\begin{align}\label{eq::pwrb_est}
  h^{-2}\norm{u - E_T u}_{T}^2 +\norm{\nabla(u - E_T u)}_{T}^2 & \lesssim \norm{\eps(u)}_T^2.
\end{align}
% As can be seen through elementary calculations,
With the element center of mass $x_T := \Pi^0_Tx$, elementary calculations show
\begin{align*}
  E_T u (x) = \Pi^0_T u + \kappa\big(\Pi^0_T\curl{u}\big) \cdot (x - x_T).
\end{align*}

\begin{proof}[Proof of Lemma \ref{lemma::nc_to_c_nokorn}]
  For any $T\in\mesh$, define the set of element patch elements
  $\meshT:=\{S\in\mesh:\bar{T}\cap\bar{S}\neq\emptyset\}$
  and the element patch
  $\omega_T := (\bigcup_{S\in\mathcal{T}_{h,T}}\overline{S})^{\circ}$.
  We write $\mathcal{I}_{\omega}$ for the local interpolation operator
  defined only on $\meshT$ as $\IVf$ is on $\mesh$,
  i.e. by averaging only over values from elements in $\meshT$.
  There holds $(\IVf u)_{|T} = (\mathcal{I}_{\omega}u)_{|T}$ and
  \begin{align*}
    ((I-\IVf)u)_{|T} = ( (I-\mathcal{I}_{\omega})(u - R) )_{|T} \quad\forall R\in \Poly^1(\omega_T, \rr^d)\supseteq\RM(\omega_T).
  \end{align*}
  In combination with estimate \eqref{eq::nc_to_c_est_lin} applied to $\mathcal{I}_{\omega}$ on $\meshT$ this shows
  \begin{align*}
    h^{-2}\|(I-\IVf)u\|_T^2 + \|\nabla( (I-\IVf)u )\|_T^2 
    & \leq \inf_{R\in \RM(\omega_T)} h^{-2}\|(I-\mathcal{I}_{\omega}) (u-R)\|_{\omega_T}^2 + \|\nabla((I-\mathcal{I}_{\omega}) (u-R))\|_{\omega_T}^2 \\
    & \lesssim \inf_{R\in\RM(\omega_T)} \sum_{\tilde{T}\in\meshT}\|\nabla(u-R)\|_{\tilde{T}}^2 +
      \sum_{F\in\mathcal{F}_{\omega}^\circ} h^{-1} \|\Pi^0\jump{u-R}\|_F^2 \\
    & = \inf_{R\in\RM(\omega_T)} \sum_{\tilde{T}\in\meshT}\|\nabla(u-R)\|_{\tilde{T}}^2 +
      \sum_{F\in\mathcal{F}_{\omega}^\circ} h^{-1} \|\Pi^0\jump{u}\|_F^2,
  \end{align*}
  where $\mathcal{F}_\omega^\circ$ denotes the set of interior facets of $\meshT$.
  We can further bound the volume terms by inserting $\pm E_{\tilde{T}}u$ and using \eqref{eq::pwrb_est},
  \begin{align*}
    \sum_{\tilde{T}\in\meshT}\|\nabla(u-R)\|_{\tilde{T}}^2
    & \lesssim \sum_{\tilde{T}\in\meshT}\|\nabla(u-E_{\tilde{T}} u)\|_{\tilde{T}}^2 + \|\nabla(E_{\tilde{T}} u - R)\|_{\tilde{T}}^2 
     \lesssim \sum_{\tilde{T}\in\meshT}\|\eps(u)\|_{\tilde{T}}^2 + \|\nabla(E_{\tilde{T}} u - R)\|_{\tilde{T}}^2.
  \end{align*}
  We see that it remains to find $R\in\RM(\omega_T)$ such that
  \begin{align}\label{eq::find_an_r}
    \sum_{\tilde{T}\in\meshT}\|\nabla(E_{\tilde{T}} u - R)\|_{\tilde{T}}^2 \lesssim
    \sum_{\tilde{T}\in\meshT}\|\eps(u)\|_{\tilde{T}}^2 + \sum_{F\in\mathcal{F}_\omega^\circ}h^{-1} \|\Pi^R\jump{u}\|_F^2 .
  \end{align}
  Similar to the definition of $E_T$, with $x_{\omega}:=\Pi^0_{\omega_T}x$
  a suitable $R$ is
  \begin{align*}
    R & := \Pi^0_{\omega_T} u + \kappa\big(\Pi^0_{\omega_T}\curl{u}\big)\cdot(x - x_{\omega}).
  \end{align*}
  Calculations show
  $ \Pi^0_{\omega_T} \curl{u} = \sum_{\tilde{T}\in\meshT} \alpha_{\tilde{T}} \Pi^0_{\tilde{T}}\curl{u} $
  with $\alpha_{\tilde{T}} := \frac{|\tilde{T}|}{|\omega_T|}$, and therefore
  \begin{align*}
    R & = \Pi^0_{\omega_T} u + \sum_{\tilde{T}\in\meshT}\alpha_{\tilde{T}}\kappa\big(\curl{(E_{\tilde{T}}u)}\big)\cdot(x - x_{\omega}).
  \end{align*}
  % We insert $\pm E_{\tilde{T}}u$ on the left hand side of \eqref{eq::find_an_r}
  % and use \eqref{eq::pwrb_est} to see
  % \begin{align*}
  %   \sum_{\tilde{T}\in\meshT}\|\nabla(u-R)\|_{\tilde{T}}^2
  %   & \lesssim \sum_{\tilde{T}\in\meshT}\|\nabla(u-E_{\tilde{T}} u)\|_{\tilde{T}}^2 + \|\nabla(E_{\tilde{T}} u - R)\|_{\tilde{T}}^2 \\
  %   & \lesssim \sum_{\tilde{T}\in\meshT}\|\eps(u)\|_{\tilde{T}}^2 + \|\nabla(E_{\tilde{T}} u - R)\|_{\tilde{T}}^2.
  % \end{align*}
  % It remains to bound $\|\nabla(E_{\tilde{T}} u - R)\|_{\tilde{T}}^2$ for $\tilde{T}\in\meshT$.
  As $\eps(E_{\tilde{T}}u - R) = 0$, there holds
  $\nabla(E_{\tilde{T}} u - R) = \kappa(\curl{(E_{\tilde{T}} u - R)})\in\Poly^0(\meshT, \rr^d)$, i.e.
  \begin{align*}
    \|\nabla(E_{\tilde{T}} u - R)\|_{\tilde{T}}^2
    \sim h^d|\nabla(E_{\tilde{T}} u - R)_{|\tilde{T}}|^2
    = h^d\Big|\sum_{S\in\meshT} \alpha_S \big(\curl(E_{\tilde{T}} u) - \curl(E_{S}u)\big)\Big|,
  \end{align*}
  where we used $\sum_{S\in\meshT}\alpha_{S}=1$ and therefore, with $\alpha_S \leq 1$,
  \begin{align*}
    \sum_{\tilde{T}\in\meshT}\|\nabla(E_{\tilde{T}} u - R)\|_{\tilde{T}}^2
    & \lesssim \sum_{\tilde{T}\in\meshT} \sum_{S\in\meshT} h^d \Big|\big(\curl{(E_{\tilde{T}} u)} - \curl{(E_{S}u)}\big)\Big|.
  \end{align*}
  \begin{figure}
    \centering
    \includegraphics[width=0.3\textwidth, clip = true, trim = 0cm 0 0 0 ]{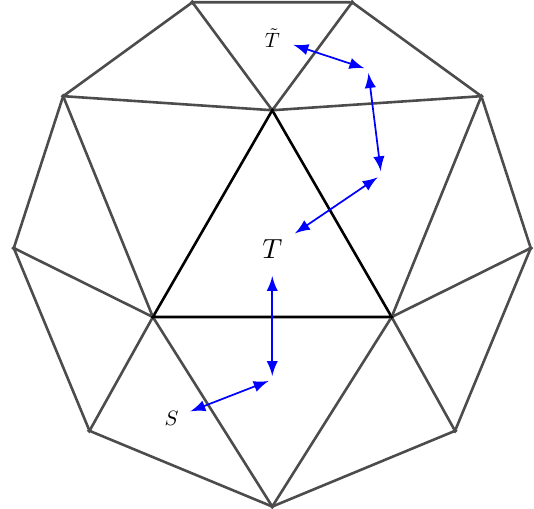}
    \caption{ Example path in two dimensions for the last estimate in \eqref{eq::patch_path}.}
    \label{fig::patch_path}
  \end{figure}
  Any two elements in $\meshT$ are connected via a path over a
  bounded number of other  elements in $\meshT$, and we can bound this last sum by one over facet terms
  (see Figure \ref{fig::patch_path}):
  \begin{align}
    % \begin{split} 
      \sum_{\tilde{T}\in\meshT} \sum_{S\in\meshT} h^d \Big|\big(&\curl(E_{\tilde{T}} u) - \curl(E_{S}u)\big)\Big| 
       \lesssim \sum_{F\in\mathcal{F}_{\omega}^\circ}h^d|\curl{(E_{T_{F,L}}u)} - \curl{(E_{T_{F,R}}u)}|^2,\label{eq::patch_path}
    % \end{split}
  \end{align}
  where $T_{F,L}$ and $T_{F,R}$ denote the two elements that share the facet $F$.
  Any facet is only summed up over a bounded number of times after
  reordering of the sum and due to the shape regularity of $\mesh$,
  \eqref{eq::patch_path} holds with a single constant for all patches in $\mesh$.
  % To summarize the last couple of steps, we have shown
  % \begin{align*}
  %   \sum_{\tilde{T}\in\meshT}\|\nabla(E_{\tilde{T}} u - R)\|_{\tilde{T}}^2
  %   & \lesssim \sum_{F\in\mathcal{F}_{\omega}^\circ}h^d|\curl{(E_{T_{F,L}}u)} - \curl{(E_{T_{F,R}}u)}|^2,
  % \end{align*}
  % and the final bound we need is one for the occuring jump terms. We
  % A bound for these jump terms can be derived with
  These jump terms can be bounded
  an expansion of $E_{\tilde{T}}u$ at
  %the facet center off mass
  $x_F:= \Pi^0_F(x)$,
  \begin{align*}
    E_{\tilde{T}}u(x) = \Pi^0_F(E_{\tilde{T}}u) + \kappa(\curl{(E_{\tilde{T}}u)})\cdot (x-x_F) \quad\text{for }x\in F,
  \end{align*}
  and the elementary estimate $\|x-x_F\|_F^2\sim h^2|F|\sim h^{d+1}$.
  Writing $\mathcal{T}_F:=\{T_{F,L},T_{F,R}\}$, they show
  \begin{align*}
    h^d|\curl(E_{T_{F,L}}u) - \curl(E_{T_{F,R}}u)|^2
    & \lesssim h^{-1}\|\kappa\big(\curl(E_{T_{F,L}}u) - \curl(E_{T_{F,R}}u)\big)\cdot (x-x_F) \|_F^2 \\
    & = h^{-1}\|(\Pi^R_F-\Pi^0_F) \big(E_{T_{F,L}}u - E_{T_{F,R}}u\big)\|_F^2 \\
    & \lesssim h^{-1}\|\Pi^R_F\jump{u}\|_F^2 + \sum_{\tilde{T}\in\mathcal{T}_F} h^{-1}\|\Pi^R_F(u - E_{\tilde{T}}u)\|_{F}^2.
  \end{align*}
  An $H^1$ trace inequality and \eqref{eq::pwrb_est} let us bound
  \begin{align*}
    h^{-1}\|\Pi^R_F(u - E_{\tilde{T}}u)\|_{F}^2
    \lesssim \|(u - E_{\tilde{T}}u)\|_{H^1(\tilde{T})}^2
    \lesssim \|\eps(u)\|_{\tilde{T}}^2,
  \end{align*}
  in summary,
  \begin{align*}
    \sum_{\tilde{T}\in\meshT}\|\nabla(E_{\tilde{T}} u - R)\|_{\tilde{T}}^2
    & \lesssim \sum_{F\in\mathcal{F}_{\omega}^\circ}h^d|\curl{(E_{T_{F,L}}u)} - \curl{(E_{T_{F,R}}u)}|^2 
     \lesssim \sum_{\tilde{T}\in\meshT}\|\eps(u)\|_{\tilde{T}}^2 +  \sum_{F\in\mathcal{F}_{\omega}^\circ}h^{-1}\|\Pi^R_F\jump{u}\|_F^2,
  \end{align*}
  % \begin{align*}
  %   \sum_{\tilde{T}\in\meshT}\|\nabla(u-R)\|_{\tilde{T}}^2
  %   & \lesssim \sum_{\tilde{T}\in\meshT}\|\eps(u)\|_{\tilde{T}}^2 + \|\nabla(\Pi_{\tilde{T}} u - R)\|_{\tilde{T}}^2 \\
  %   & \lesssim  \sum_{\tilde{T}\in\meshT}\|\eps(u)\|_{\tilde{T}}^2 +  \sum_{F\in\mathcal{F}_{\omega}^\circ}h^d|\curl(E_{T_{F,L}}u) - \curl(E_{T_{F,R}}u)|^2 \\
  %   & \lesssim \sum_{\tilde{T}\in\meshT}\|\eps(u)\|_{\tilde{T}}^2 +  \sum_{F\in\mathcal{F}_{\omega}^\circ}h^{-1}\|\Pi^R_F\jump{u}\|_F^2,
  % \end{align*}
  i.e. \eqref{eq::find_an_r} holds for our specific choice of $R$ which finishes the proof.
\end{proof}

\begin{proof}[Proof of Lemma \ref{lemma::nc_to_c_nokorn_withbc}]
  An inverse estimate for the piecewise linear $\IVf u\in\Vbar_h^f$ shows
  \begin{align*}
    \sum_{T\in\mesh} h^{-2}\|u - \IV u\|_T^2
    & \lesssim \sum_{T\in\mesh} h^{-2} \|u - \IVf u\|_T^2 + h^{-2} \|(I - \pi_0) \IVf u\|_T^2 
     \lesssim \sum_{T\in\mesh} \Big(h^{-2} \|u - \IVf u\|_T^2 + \sum_{F\in\facetsT\cap\facetsD} h^{-1} \|(I - \pi_0) \IVf u\|_F^2 \Big).
  \end{align*}
  As $(I - \pi_0) \IVf u = \IVf u$ on $F\in\facetsD$, and $\IVf u\in\Poly^1(\mesh,\rr^{\spacedim})$
  \begin{align*}
    \|(I - \pi_0) \IVf u\|_F^2
    = \|\Pi^1_F \IVf u\|_F^2
       &\lesssim \|\Pi^R_F \IVf u\|_F^2 + \|(\Pi^1_F - \Pi^R_F) \IVf u\|_F^2
      \lesssim \|\Pi^R_Fu\|_F^2 + \|\Pi^R_F(u - \IVf u)\|_F^2 + \|(\Pi^1_F - \Pi^R_F) \IVf u\|_F^2
  \end{align*}
  %The first term does not need to be bounded as it appears on the right in \eqref{eq::nc_to_c_nokorn_withbc} and
  We bound the second one with an $H^1$ trace inequality and a scaling argument,
  \begin{align*}
    h^{-1} \|\Pi^R_F(u - \IVf u)\|_F^2 \lesssim  h^{-2} \|u - \IVf u\|_T^2 + \|\nabla (u - \IVf u)\|_T^2,
  \end{align*}
  where $T$ is the unique element such that $F\in\facetsT$.
  %Similarly to computations done in the proof of Lemma \ref{lemma::nc_to_c_nokorn},
  An explicit expansion of the piecewise linear $\IVf u$ at the facet center of mass $x_F:=\Pi^0_F x$
  shows %and Lemma \ref{lemma::nc_to_c_nokorn} show
  \begin{align*}
    h^{-1}\|(\Pi^1_F - \Pi^R_F) \IVf u\|_F^2 & = h^{-1}\|\eps((\IVf u)_{|T})\cdot(x - x_F)\|_F^2 \lesssim \|\eps(\IVf u)\|_T^2
     \lesssim \|\eps(u)\|_T^2 + \|\nabla(u-\IVf u)\|_T^2
  \end{align*}
  Finally, with $\jump{\Pi_F^Ru} = \Pi^R_Fu$ on $F\in\facetsD$,
  by Lemma \ref{lemma::nc_to_c_nokorn} there holds
  \begin{align*} %TODO: FORMATTING, this looks ugly
    \sum_{T\in\mesh} h^{-2}\|u - \IV u\|_T^2 
    &\lesssim \sum_{T\in\mesh}  \Big( h^{-2} \|u - \IVf u\|_T^2+ \|\nabla(u - \IVf u)\|_T^2   +  \|\eps(u)\|_T^2 +
      \sum_{F\in\facetsT\cap\facetsD} h^{-1} \|\Pi_F^Ru\|_F^2 \Big)\\
    &\lesssim \sum_{T\in\mesh} \Big( \|\eps(u)\|_T^2 + \sum_{F\in\facetsT}h^{-1}\|\Pi_F^R\jump{u}\|_F^2
      \Big).
  \end{align*}
  The other volume terms $\|\nabla(u-\IV u)\|_T$ in \eqref{eq::nc_to_c_nokorn_withbc} are bounded analogously.
\end{proof}

\section{Trace estimates} \label{app::trace} %TODO: comment this out, move a very rough version up to proofs!
The crucial step in the proof of \eqref{eq::scalarinverseest} in \cite{Schb_DDHDG} % of the trace inequality
was to construct for a given $w\in\Poly^{\pdg}(T,\rr)$ a $\tilde{w}\in\Poly^{\pdg}(T,\rr)$
that approximates it in $\|\cdot\|_{j,F}$ and is bounded in the $H^1$ semi-norm and yet 
$\tilde{w}_{|\partial F}=0$.
That is, implicitly, for $F\in\facets$ and $T\in\mesh$ with $F\in\facetsT$, an operator
\begin{equation}
\begin{gathered}
  \mathcal{E}^{s,F}_0: \Poly^{\pdg}(T,\rr)\rightarrow\Poly^{\pdg}(T,\rr) 
  \quad\textrm{with}\quad(\mathcal{E}_0^{s,F}u)_{|\partial F} = 0 \\
  \textrm{and}\quad\|\nabla\mathcal{E}_0^{s,F}u\|_T^2 + \|\mathcal{E}_0^{s,F}u - u\|^2_{j,F} \lesssim \log{\pdg} \|u\|^2_{H^{1}(T)}
\end{gathered}\label{eq::Esf}
\end{equation}
was constructed.
As no boundary conditions are enforced strongly in $\|\cdot\|_{1,F,0}$, 
the trace of $\mathcal{E}^{s,F}_0w$ could be extended
to a function admissible for the infimum in $\|\cdot\|_{1,F,0}$.
% In the following we show that the inverse estimate also holds for
% discrete trace norms related to $\VUUH$ and $\|\cdot\|_{\eps,h}$. 
In addition to $\mathcal{E}^{s,F}_0$,
of the commuting $H^1, H(\curl)$ and $H(\div)$ extensions
introduced in \cite{polext_I, polext_II, polext_III},
we need the first,
\begin{gather*}
  \mathcal{E}^{s}: \Poly^{\pdg}(\partial T,\rr)\rightarrow\Poly^{\pdg}(T,\rr) \\
  \textrm{with}\quad(\mathcal{E}^{s}u)_{\partial T} = u_{\partial T},
  \quad\textrm{and}\quad \|\mathcal{E}^{s}u\|_{H^1(T)}^2 \lesssim \|u\|_{H^{1/2}(\partial T)}^2,
\end{gather*}
see \cite[Theorem 6.1]{polext_I}, and last,
\begin{gather*}
  \mathcal{E}^{\div}: \{ u\in H^{-1/2}(\partial T) : u_{|F}\in\Poly^{\pdg}(F,\rr^d)~\forall F\in\facetsT\}\rightarrow\Poly^{\pdg}(T,\rr^d) \\
  \textrm{with}\quad((\mathcal{E}^{\div}u)_n)_{|F} = (u_n)_{|F}~F\in\facetsT,
  \quad\textrm{and}\quad \|\mathcal{E}^{\div}u\|_{H^1(T)}^2 \lesssim \|u\|_{H^{-1/2}(F)}^2.
\end{gather*}
Note although that $\mathcal{E}^{\div}$ was constructed from
$H^{-1/2}(\partial T)\rightarrow H^{\div}(T)$ in \cite[Theorem 7.1]{polext_II},
the authors actually proved continuity in the $H^1(T)$ norm.

\begin{lemma}\label{lemma::stabfacettrace}
  For $F\in\facets$ and $u\in \Poly^{\pdg}(T,\rr^{\spacedim})$ there exists $\tilde{u}\in \Poly^{\pdg}(T,\rr^{\spacedim})$
  with $\tilde{u}_n = u_n$ on $F$ and $\tilde{u}_n=0$ on $\tilde{F}\in\facetsT\setminus\{F\}$
  such that
  \begin{align}\label{eq::stabfacettrace}
    \|\nabla \tilde{u}\|_T^2 + \|(\tilde{u} - u)_t\|_{j,F}^2
    + \sum_{\tilde{F}\in\facetsT\setminus\{F\}}h^{-1}\|u_t\|_{j,\tilde{F}}^2
    \lesssim
    (\log{\pdg})^3\|u\|_{H^1(T)}^2.
  \end{align}
\end{lemma}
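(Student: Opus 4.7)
The plan is to adapt the two-step scalar strategy of \cite{Schb_DDHDG}---first produce a good $\|\cdot\|_{j,F}$-approximation whose trace on $\partial T$ extends continuously across $\partial F$, then lift it back into $T$ with an $H^1$-conforming polynomial extension---to the vector-valued setting, and to superimpose a separate correction of the normal trace using an $H(\div)$-conforming polynomial extension. The three building blocks are the scalar stabilized trace operator $\mathcal{E}^{s,F}_0$ from \eqref{eq::Esf}, the $H^1$-extension $\mathcal{E}^s$, and the $H(\div)$-extension $\mathcal{E}^{\div}$ recalled above.

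First, I would apply $\mathcal{E}^{s,F}_0$ componentwise in Cartesian coordinates to $u$, obtaining $\phi\in\Poly^{\pdg}(T,\rr^{\spacedim})$ with $\phi_{|\partial F}=0$ and $\|\nabla\phi\|_T^2+\|\phi-u\|_{j,F}^2\lesssim\log\pdg\,\|u\|_{H^1(T)}^2$. Since $\phi$ vanishes on $\partial F$, the piecewise polynomial datum on $\partial T$ equal to $\phi_{|F}$ on $F$ and zero on every $\tilde F\in\facetsT\setminus\{F\}$ is continuous across edges, so applying $\mathcal{E}^s$ componentwise yields $\hat\phi\in\Poly^{\pdg}(T,\rr^{\spacedim})$ with $\hat\phi_{|F}=\phi_{|F}$, $\hat\phi_{|\tilde F}=0$, and in particular $\hat\phi_n=0$ on every facet other than $F$. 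Next, I would correct the normal trace by setting $v\in L^2(\partial T)$ with $v_{|F}:=u_n-\phi_n$ and $v_{|\tilde F}:=0$, taking $\psi:=\mathcal{E}^{\div}v$, and finally $\tilde u:=\hat\phi+\psi\in\Poly^{\pdg}(T,\rr^{\spacedim})$. By construction $\tilde u_n=u_n$ on $F$, $\tilde u_n=0$ on every $\tilde F\neq F$, and $(\tilde u-u)_t=(\phi-u)_t+\psi_t$ on $F$.

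It then remains to estimate $\|\nabla\tilde u\|_T^2$, $\|\psi_t\|_{j,F}^2$, and the boundary-jump terms $\sum_{\tilde F\neq F}h^{-1}\|u_t\|_{j,\tilde F}^2$. The continuity of $\mathcal{E}^s$ together with the polynomial estimate comparing $H^{1/2}_{00}(F)$ with $H^{1/2}(F)$ for traces vanishing on $\partial F$ (already used in \cite{Schb_DDHDG}) gives $\|\nabla\hat\phi\|_T^2\lesssim(\log\pdg)^2\|u\|_{H^1(T)}^2$. The continuity of $\mathcal{E}^{\div}$, combined with a polynomial $H^{-1/2}$-estimate absorbing the jumps of $v$ at $\partial F$ and the lowest-mode bound $\|u_n-\phi_n\|_F^2\lesssim (h/\pdg)\|\phi-u\|_{j,F}^2$ that follows from the definition of the $j$-norm, yields $\|\nabla\psi\|_T^2\lesssim(\log\pdg)^2\|u\|_{H^1(T)}^2$; and $\|\psi_t\|_{j,F}^2$ is dominated by $\|\psi\|_{H^1(T)}^2$ up to another $\log\pdg$-factor via the same polynomial trace inequality. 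Collecting the logarithms produces the claimed $(\log\pdg)^3$-bound, and the remaining other-facet contributions are controlled by the polynomial inverse estimates of \cite[Theorem~2]{Schb_DDHDG}.

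The main obstacle is the second step: controlling $\mathcal{E}^{\div}v$ for a polynomial datum $v$ with jumps at $\partial F$ with a $\pdg$-explicit $H^{-1/2}$-estimate that only loses a logarithmic factor. Once this technical point is handled, everything else is a routine combination of the scalar estimates from \cite{Schb_DDHDG} with the polynomial extensions from \cite{polext_I,polext_II,polext_III}.
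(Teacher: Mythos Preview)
Your construction and the paper's differ in the \emph{order} of the two corrections, and this order is essential. The paper applies $\mathcal{E}^{\div}$ \emph{first} to the full datum $u_n\chi_F$, obtaining $\tilde u_1$ with the prescribed normal traces and $\|\tilde u_1\|_{H^1(T)}\lesssim\|u\|_{H^1(T)}$ (no $\pdg$-dependence), and \emph{then} corrects the tangential traces face by face with purely tangential additions $\mathcal{E}^{s,\hat F}\mathcal{E}^{s,\hat F}_0(\lambda_{\hat F})\,t_{\hat F}$; these leave the normal trace untouched and have controlled $\|\cdot\|_{j,\hat F}$-behaviour directly from \eqref{eq::Esf}. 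You reverse this: scalar/tangential first, then normal. Your final correction $\psi=\mathcal{E}^{\div}v$ therefore pollutes all tangential traces, and closing the argument requires the bound $\|\psi_t\|_{j,\hat F}^2\lesssim(\log\pdg)\,\|\psi\|_{H^1(T)}^2$ that you invoke as routine.

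That estimate is false. The $j$-norm is not an $H^{1/2}$-type trace norm: by \eqref{eq::jFjumpN} on the reference element the lowest mode alone contributes $\pdg(\pdg+1)\|\Pi^0_Fw\|_F^2$, so for $w\equiv 1$ one has $\|w\|_{j,F}^2\sim\pdg^2$ while $\|w\|_{H^1(T)}^2\sim 1$. The generic bound is only $\|w\|_{j,F}^2\lesssim\pdg^2\|w\|_{H^1(T)}^2$, and since your estimate $\|\psi\|_{H^1}^2\lesssim\pdg^{-1}\log\pdg\,\|u\|_{H^1}^2$ buys back just one power of $\pdg$, you end up with $\|\psi_t\|_{j,\hat F}^2\lesssim\pdg\log\pdg\,\|u\|_{H^1}^2$, not a polylogarithm. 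Incidentally, the step you flag as the main obstacle---the $H^{-1/2}$-bound for the zero-extension of $v$---is harmless: on the reference element $L^2(\partial T)\hookrightarrow H^{-1/2}(\partial T)$ with a $\pdg$-independent constant, so $\|v\|_{H^{-1/2}}\lesssim\|u_n-\phi_n\|_{L^2(F)}$ directly. The genuine difficulty is the one you dismissed. Your construction can be repaired by appending, after $\psi$, a second round of face-wise tangential corrections exactly as in the paper to remove $\psi_t$; but then you have essentially reproduced the paper's argument with a redundant preliminary step.
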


\begin{proof} %[Proof of Lemma \ref{lemma::stabfacettrace}]
  To simplify the notation we only show a proof in three dimensions, % TODO: eh alles nur 3d!
  the two-dimensional case works analogously.
  It also suffices to show the estimate on a reference tetrahedron,
  for general tetrahedra it follows from scaling arguments.

  First, we use $\mathcal{E}^{\div}$ to get a $\tilde{u}_1$ % on purpose formulated like this (bc only extends from ONE facet)
  which fulfills $(\tilde{u}_1)_n = u_n$ on $F$ and
  $(\tilde{u}_1)_n = 0$ on $\tilde{F}\in\facetsT\setminus\{F\}$
  with bounded $H^1$ norm,
  \begin{align*}
    \|\tilde{u}_1\|_{H^1(T)}^2
    \lesssim \|u\|_{H^{-1/2}(F)}^2
    \lesssim \|u\|_{H(\div,T)}^2
    \leq \|u\|_{H^1(T)}^2.
  \end{align*}
  We have no control over the tangential traces of
  $\tilde{u}_1$ and have to add correction terms. For all
  $\hat{F}\in\facetsT$ we pick two arbitrary normalized, orthogonal
  tangent vectors $t_{\hat{F}}$ and $\tilde{t}_{\hat{F}}$
  and write the ``errors'' we need to compensate for on $F$ and $\tilde{F}\in\facetsT\setminus\{F\}$ as
  \begin{alignat*}{3}
    &\lambda_F t_{F}:= \big((u-\tilde{u}_1) \cdot t_{F})t_{F},
    && \quad\quad \tilde{\lambda}_F\tilde{t}_{F} := \big((u-\tilde{u}_1)\cdot \tilde{t}_{F}\big)\tilde{t}_{F}, \\
    &\lambda_{\tilde{F}}t_{\tilde{F}} := -\big(\tilde{u}_1\cdot t_{\tilde{F}}\big)t_{\tilde{F}},
    && \quad\quad \tilde{\lambda}_{\tilde{F}}\tilde{t}_{\tilde{F}} := -\big(\tilde{u}_1\cdot \tilde{t}_{\tilde{F}}\big)\tilde{t}_{\tilde{F}}.
  \end{alignat*}
  We write $\mathcal{E}^{s,\hat{F}}$ for $\mathcal{E}^s$ applied to the
  extension by zero to $\partial T$ of functions that vanish on
  $\partial\hat{F}$. This defines an $H_{00}^{1/2}$ stable extension
  \begin{align*}
    \mathcal{E}^{s,\hat{F}} : \Poly^{\pdg}(F,\rr)\cap H_{00}^{1/2}(F) \rightarrow \Poly^{\pdg}(T,\rr)
    \quad\textrm{with}\quad \|\mathcal{E}^{s,\hat{F}}u\|_{H^1(T)} \lesssim \|u\|_{H_{00}^{1/2}(F)}
  \end{align*} % TODO: Cite Toselli Widlund A.8??
  and construct a corrected $\tilde{u}$ as
  \begin{align*}
    \tilde{u} ~:=~& \tilde{u}_1
    + \sum_{\hat{F}\in\facetsT}
    \mathcal{E}^{s,\hat{F}}\mathcal{E}^{s,\hat{F}}_0(\lambda_F)t_{\hat{F}}
    + \mathcal{E}^{s,\hat{F}}\mathcal{E}^{s,\hat{F}}_0(\tilde{\lambda}_F)\tilde{t}_{\hat{F}},
  \end{align*}
  where we understand $\mathcal{E}^{s,\hat{F}}$ to be applied to the respective trace on $\hat{F}$.
  The added corrections are normal bubbles because on their associated facet they are a scalar times a tangential
  and their trace vanishes on all others, i.e. $\tilde{u}_n = (\tilde{u}_1)_n~\forall F\in\facetsT$
  and $\tilde{u}$ is admissible and we need to show that if fulfills \eqref{eq::stabfacettrace}.

  As $\mathcal{E}^{s,\tilde{F}}$ restricted to $\tilde{F}$ is just the identity,
  for $\tilde{F}\in\facetsT\setminus\{F\}$ there holds
  \begin{align*}
    \|\tilde{u}\cdot t_{\tilde{F}}\|_{j,\tilde{F}}^2
    = \|(\tilde{u}_1\cdot t_{\tilde{F}}) - \mathcal{E}^{s,\tilde{F}}\mathcal{E}^{s,\tilde{F}}_0(\tilde{u}_1\cdot t_{\tilde{F}})\|_{j,\tilde{F}}^2
    = \|(I - \mathcal{E}^{s,\tilde{F}}_0)(\tilde{u}_1\cdot t_{\tilde{F}})\|_{j,\tilde{F}}^2
  \end{align*}
  and \eqref{eq::Esf} implies
  \begin{align*}
    \|\tilde{u}\cdot t_{\tilde{F}}\|_{j,\tilde{F}}^2
    \lesssim \log{\pdg}\|\tilde{u}_1\cdot t_{\tilde{F}}\|_{H^1(T)}^2
    \lesssim \log{\pdg}\|\tilde{u}_1\|_{H^1(T)}^2
    \lesssim \log{\pdg}\|u\|_{H^1(T)}^2.
  \end{align*}
  % Similarly, for $F$ and $t\in\{t_{F}, \tilde{t}_{F}\}$ we have
  % \begin{align*}
  %   \|(u - \tilde{u})\cdot t\|_{j,F}^2 &= \|(I - \mathcal{E}_0^{s,F})((u- \tilde{u}_1)\cdot t)\|_{j,F}^2 \\
  %   & \lesssim \|(I - \mathcal{E}_0^{s,F})(u\cdot t)\|_{j,F}^2 + \|(I - \mathcal{E}_0^{s,F})(\tilde{u}_1\cdot t)\|_{j,F}^2 \\
  %                                        &\lesssim \log{\pdg} \|u\|_{H^1(T)}^2.
  % \end{align*}
  The volume terms arising from the correction of $\lambda_{\tilde{F}}$
  for $\tilde{F}\in\facetsT\setminus\{F\}$ can be bounded
  with the inverse estimate $\|v\|_{H^{1/2}_{00}(\tilde{F})}\lesssim (\log{\pdg})^2 \|v\|_{H^{1/2}(\tilde{F})}$
  for polynomials that vanish on $\partial\tilde{F}$, see \cite[Lemma 4.7]{bica_p_substr},
  \begin{align*}
    \|\nabla \mathcal{E}^{s,\tilde{F}}\mathcal{E}^{s,\tilde{F}}_0(\lambda_{\tilde{F}})t_{\tilde{F}}\|_T^2
    \lesssim \|\mathcal{E}^{s,\tilde{F}}_0\lambda_{\tilde{F}}\|_{H^{1/2}_{00}(\tilde{F})}^2
    \lesssim (\log{\pdg})^2\|\mathcal{E}^{s,\tilde{F}}_0\lambda_{\tilde{F}}\|_{H^{1/2}(\tilde{F})}^2,
  \end{align*}
  and we can continue with \eqref{eq::Esf} to see
  \begin{align*}
    \|\nabla \mathcal{E}^{s,\tilde{F}}\mathcal{E}^{s,\tilde{F}}_0(\lambda_{\tilde{F}})t_{\tilde{F}}\|_T^2
    \lesssim (\log{\pdg})^3\|\tilde{u}_1\cdot t_{\tilde{F}}\|_{H^{1}(T)}^2
    \lesssim (\log{\pdg})^3\|u\|_{H^{1}(T)}^2.
  \end{align*}
  Analogously, we show these same bounds for volume and trace terms for $\tilde{F}=F$ as well as
  $\tilde{t}_{(\cdot)}, \tilde{\lambda}_{(\cdot)}$ instead of $t_{(\cdot)}, \lambda_{(\cdot)}$.
  In summary, we have
  \begin{align*}
    \|\nabla\tilde{u}\|_T^2 &+ \|(u - \tilde{u})_t||_{j,F}^2 + \sum_{\tilde{F}\in\facetsT\setminus\{F\}} \|\tilde{u}_t\|_{j,\tilde{F}}^2 \\
    \lesssim~& \|\nabla\tilde{u}_1\|_F^2
               + \sum_{\hat{F}\in\facetsT}
               \|\nabla \mathcal{E}^{s,\hat{F}}\mathcal{E}^{s,\hat{F}}_0(\lambda_{\hat{F}})t_{\hat{F}}\|_T^2
               + \|\nabla \mathcal{E}^{s,\hat{F}}\mathcal{E}^{s,\hat{F}}_0(\tilde{\lambda}_{\hat{F}})\tilde{t}_{\hat{F}}\|_T^2 
              + \|(u - \tilde{u})_t||_{j,F}^2 + \sum_{\tilde{F}\in\facetsT\setminus\{F\}} \|\tilde{u}_t\|_{j,\tilde{F}}^2 \\
    \lesssim~& (\log{\pdg})^3 \|u\|_{H^{1}(T)}^2.
  \end{align*}
\end{proof}

\begin{proof} [Proof of Lemma~\ref{coro::traceZ}]
  For the minimizer $w$ in \eqref{eq::traceF},
  a Korn inequality on $T$ shows
  \begin{align*}
    \|w\|_{H^1(T)}^2
    & \lesssim \|\eps(w)\|_T^2 + \|\Pi^R_F w\|_{j,F}^2 
     = \|\eps(w)\|_T^2 + \|\Pi^R_F (w - \hat{u})_t\|_{j,F}^2 
     \lesssim \|(u, \hat{u})\|_{\eps,F}.
  \end{align*}
  Choosing $\tilde{w}\in\Poly^{\pdg}(T,\rr^d)$ as in Lemma \ref{lemma::stabfacettrace},
  finishes the proof as it is admissible for the infimum in \eqref{eq::traceFZ}
  and bounds it by $\|w\|_{H^1(T)}^2\lesssim \|(u, \hat{u})\|_{\eps,F}$.
\end{proof}

% ned schlecht, 1st round of Kürzungen: 38+ein bisl -> 33 1/2

\end{document}